\definecolor{astral}{RGB}{46,116,181}
\newtheorem{theorem}{Theorem}[section]
\newtheorem{lemma}[theorem]{Lemma}
\newtheorem{definition}[theorem]{Definition}
\newtheorem{example}[theorem]{Example}
\newtheorem{remark}[theorem]{Remark}
\newtheorem{thm}{Theorem}[section]
\newtheorem{corollary}[thm]{Corollary}
\newtheorem{preposition}[thm]{Preposition}
\definecolor{darkslategray}{rgb}{0.18, 0.31, 0.31}
\definecolor{warmblack}{rgb}{0.0, 0.26, 0.26}
\journal{arxiv}
\newcommand{\mc}[1]{\mathcal {#1}}
\newcommand{\n}{{*_N}}
\newcommand{\2}{{*_2}}
\newcommand{\m}{{*_M}}
\newcommand{\kl}{{*_L}}
\begin{document}

\begin{frontmatter}

\title{ \textcolor{warmblack}{\bf Generalized Inverses of Boolean Tensors via Einstein Product}}

\author{Ratikanta Behera $^*$ and Jajati Keshari Sahoo $^\dag$ }

\address{$^{*}$
Department of Mathematics and Statistics,\\
Indian Institute of Science Education and Research Kolkata,\\
 Nadia, West Bengal, India.\\
\textit{E-mail}: \texttt{ratikanta@iiserkol.ac.in}

                \vspace{.3cm}

                       $^{\dag}$ Department of Mathematics,\\
                       BITS Pilani, K.K. Birla Goa Campus, Goa, India
                        \\\textit{E-mail}: \texttt{jksahoo\symbol{'100}goa.bits-pilani.ac.in }
                       }

\begin{abstract}
\textcolor{warmblack}{
Applications of the theory and computations of Boolean matrices are of fundamental importance to study a variety of discrete structural models.  But the increasing ability of data collection systems to store huge volumes of multidimensional data, the Boolean matrix representation of data analysis is not enough to represent all the information content of the multiway data in different fields.  From this perspective, it is appropriate to develop an infrastructure that supports reasoning about the theory and computations. 
In this paper, we discuss the generalized inverses of the Boolean tensors with the Einstein product. Further, we elaborate on this theory by producing a few characterizations of different generalized inverses and several equivalence results on Boolean tensors. We explore the space decomposition of the Boolean tensors and present reflexive generalized inverses through it. In addition to this, we address rank and the weight for the Boolean tensor.
}
\end{abstract}

\begin{keyword}
Boolean tensor\sep Generalized inverse \sep Moore-Penrose inverse\sep Space decomposition\sep Boolean rank.
\end{keyword}

\end{frontmatter}

\section{Introduction}
\label{sec1}

\subsection{Background and motivation}

The study of the Boolean matrices \cite{bapat, bapatbook, luce, rao} play an important role in linear algebra \cite{belohlavek2015below, kim, rao72}, combinatorics \cite{brual}, graph theory \cite{berge}  and network theory \cite{ledley, li2015logical}. However, this becomes particularly challenging to store huge volumes of multidimensional data.  This potential difficulty can be easily overcome, thanks to tensors,  which are natural multidimensional generalizations of matrices \cite{kolda, loan}. Here the notion of tensors is different in physics and engineering (such as stress tensors) \cite{Nar93}, which are generally referred to as tensor fields in mathematics \cite{de2008tensor}.  However, it will be more appropriate if we study the Boolean tensors and the generalized inverses of Boolean tensors. Hence the generalized inverses of Boolean tensors will encounter in many branches of mathematics, including relations theory \cite{plem}, logic, graph theory, lattice theory \cite{birkhoff} and algebraic semigroup theory.

Recently, there has been increasing interest in studying inverses \cite{BraliNT13} and different generalized inverses of tensors based on the Einstein product \cite{bm, wei18,  stan, sun}, and opened new perspectives for solving multilinear systems \cite{kolda, Mao19}. In \cite{wei18, stan}, the authors have introduced some basic properties of the range and null space of multidimensional arrays. Further, in \cite{stan}, it was discussed the adequate definition of the tensor rank, termed as reshaping rank. Corresponding representations of the weighted Moore-Penrose inverse introduced in \cite{BehMM19, we17} and investigated a few characterizations in \cite{PanMi19}. Though this work is focusing on the binary case; i.e., concentrating some interesting results based on the Boolean tensors and generalized inverses of Boolean tensors via the Einstein product.  In many instances, the result in the general case does not immediately follow even though it is not difficult to conclude.

On the other hand, one of the most successful developments in the world of multilinear algebra is the concept of tensor decomposition \cite{ Kolda01, kolda, LalmV00}. This concept gives a clear and convenient way to implement all basic operations efficiently.  Recently this concept is extended in Boolean tensors \cite{, erdos2013discovering, khamis2017Boolean, rukat2018tensormachine}. Further,  the fast and scalable distributed algorithms for Boolean tensor decompositions were discussed in  \cite{miettinen2011Boolean}. In addition to that, a few applications of these decompositions are discussed in \cite{erdos2013discovering, metzler2015clustering} for information extraction and clustering. At that same time, Brazell, et al. in \cite{BraliNT13} discussed decomposition of tensors from the isomorphic group structure on the influence of the Einstein Product and demonstrated that they are special cases of the canonical polyadic decomposition \cite{carroll1970analysis}. The vast work on decomposition on the tensors and its several applications in different areas of mathematics in the literature, and the recent works in \cite{BraliNT13, sun}, motivate us to study the generalized inverses and space decomposition in the framework of Boolean tensors. 
 This study leads to introduce the rank and the weight for the Boolean tensor with its application to  generalized inverses.

\subsection{Organization of the paper}

The rest of the paper is organized as follows. In Section 2 we present some definitions, notations, and preliminary results, which are essential in proving the main results.  The main results are discussed in Section 3. It has four subparts. In the first part, some identities are proved while the generalized inverses for Boolean tensor are discussed in the second part. The third part mainly focuses on weighted Moore-Penrose inverses. Space decomposition and its application to generalized inverses are discussed in the last part.  Finally, the results along with a few questions are concluded in Section 4.

\section{Preliminaries}
We first introduce some basic definitions and notations which will be used throughout the article. 
\subsection{Definitions and terminology}
For convenience, we first briefly explain some of the terminologies which will be used here onwards. The tensor notation and definitions are followed from the article \cite{BraliNT13, sun}.  We refer  $\mathbb{R}^{I_1\times\cdots\times I_N}$  as the set of order $N$  real tensors. Indeed, a matrix is a second order tensor, and a vector is a first order tensor. Let $\mathbb{R}^{I_1\times\cdots\times I_N}$  be the set of  order $N$ and dimension $I_1 \times \cdots \times I_N$ tensors over the real
field $\mathbb{R}$.  $\mc{A} \in \mathbb{R}^{I_1\times\cdots\times I_N}$ is a tensor with  $N$-th order tensor, and each entry of $\mc{A}$ is denoted by $a_{i_1...i_N}$.  Note that throughout the paper, tensors are represented in calligraphic letters like  $\mc{A}$, and the notation $(\mc{A})_{i_1...i_N}= a_{i_1...i_N}$ represents the scalars. The Einstein product (\cite{ein}) $ \mc{A}\n\mc{B} \in \mathbb{R}^{I_1\times\cdots\times
I_N \times J_1 \times\cdots\times J_M }$ of tensors $\mc{A} \in \mathbb{R}^{I_1\times\cdots\times I_N \times K_1
\times\cdots\times K_N }$ and $\mc{B} \in
\mathbb{R}^{K_1\times\cdots\times K_N \times J_1 \times\cdots\times
J_M }$   is defined
by the operation $\n$ via
\begin{equation}\label{Eins}
(\mc{A}\n\mc{B})_{i_1...i_Nj_1...j_M}
=\displaystyle\sum_{k_1...k_N}a_{{i_1...i_N}{k_1...k_N}}b_{{k_1...k_N}{j_1...j_M}}.
\end{equation}
Specifically, if $\mc{B} \in \mathbb{R}^{K_1\times\cdots\times K_N}$, then $\mc{A}\n\mc{B} \in \mathbb{R}^{I_1\times\cdots\times I_N}$ and 
\begin{equation*}\label{Einsb}
(\mc{A}\n\mc{B})_{i_1...i_N} = \displaystyle\sum_{k_1...k_N}
a_{{i_1...i_N}{k_1...k_N}}b_{{k_1...k_N}}.
\end{equation*}
This product is discussed in the area of continuum mechanics  \cite{ein} and the theory of relativity \cite{lai}. Further, the addition of two tensors $\mc{A}, ~\mc{B}\in \mathbb{R}^{I_1\times\cdots\times I_N \times K_1 \times\cdots\times K_N }$ is defined
as 
\begin{equation}\label{Eins1}
(\mc{A} + \mc{B})_{i_1...i_N k_1...k_N}
=a_{{i_1...i_N}{k_1...k_N}} + b_{{i_1...i_N}{k_1...k_N}}.
\end{equation}
For a tensor $~\mc{A}=(a_{{i_1}...{i_N}{j_1}...{j_M}})
 \in \mathbb{R}^{I_1\times\cdots\times I_N \times J_1 \times\cdots\times J_M},$  let  $\mc{B} =(b_{{i_1}...{i_M}{j_1}...{j_N}}) \in \mathbb{R}^{J_1\times\cdots\times J_M \times I_1 \times\cdots\times I_N}$, be the {\it transpose} of $\mc{A}$, where $b_{i_1\cdots i_Mj_1\cdots j_N} = a_{j_1\cdots j_M i_1\cdots i_N}.$ The tensor $\mc{B}$ is denoted by $\mc{A}^T$. Also, we denote  $\mc{A}^T=\left({a}_{{i_1}...{i_N}{j_1}...{j_M}}^t\right).$ 
 The trace of a tensor $\mc{A}$ with entries $(\mc{A})_{{i_1}...{i_N}{j_1}...{j_N}}$, denoted by $tr(\mc{A})$,
  is defined as the sum of the diagonal entries, i.e., 
$tr(\mc{A}) = \displaystyle\sum_{i_1 \cdots i_N}a_{{i_1...i_N}{i_1...i_N}}.$ Further, a tensor $\mc{O}$ denotes the {\it zero tensor} if  all the entries are zero. 
 A tensor $\mc{A}\in
\mathbb{R}^{I_1\times\cdots\times I_N \times I_1 \times\cdots\times
I_N}$ is {\it symmetric}  if  $\mc{A}=\mc{A}^T,$  and {\it orthogonal} if $\mc{A}\m\mc{A}^T= \mc{A}^T\n \mc{A}=\mc{I}$. Further, a tensor
$\mc{A}\in \mathbb{R}^{I_1\times\cdots\times I_N \times I_1
\times\cdots\times I_N}$  is {\it idempotent}  if $\mc{A}
\n \mc{A}= \mc{A}$.  The definition of a diagonal tensor follows. 
Further, a tensor with entries $(\mc{D})_{{i_1}...{i_N}{j_1}...{j_N}}$ is
   called a {\it diagonal
   tensor} if $d_{{i_1}...{i_N}{j_1}...{j_N}} = 0$ for $(i_1,\cdots,i_N) \neq (j_1,\cdots,j_N).$
A few more notations and definitions are discussed below for defining generalized inverses of Boolean tensors. We first recall the definition
of an identity  tensor below.

\begin{definition} (Definition 3.13, \cite{BraliNT13}) \\
A tensor 
  with entries 
     $ (\mc{I})_{i_1 \cdots i_Nj_1\cdots j_N} = \prod_{k=1}^{N} \delta_{i_k j_k}$,
   where
\begin{numcases}
{\delta_{i_kj_k}=}
  1, &  $i_k = j_k$,\nonumber
  \\
  0, & $i_k \neq j_k $.\nonumber
\end{numcases}
 is  called a {\it  unit tensor or identity tensor}.
\end{definition}
The permutation tensor is defined as follows.

\begin{definition}\label{perm} 
Let $\pi$ be a permutation map  on $(i_1,i_2,\cdots, i_N,j_1,j_2,\cdots , j_N)$ defined by 
$$\pi:=\begin{pmatrix}
i_1&i_2&\cdots &i_N&j_1&j_2&\cdots &j_N \\
\pi(i_1)&\pi(i_2)&\cdots &\pi(i_N)&\pi(j_1)&\pi(j_2)&\cdots&\pi(j_N)  \\
\end{pmatrix}.
$$
A  tensor $\mc{P}$
  with entries 
     $ (\mc{P})_{i_1 \cdots i_Nj_1\cdots j_N} = \prod_{k=1}^{N} \epsilon_{i_k} \epsilon_{j_k}$,
   where
\begin{numcases}
{\epsilon_{i_k}\epsilon_{j_k} = }
  1, &  $\pi(i_k) = j_k$,\nonumber
  \\
  0, & otherwise.\nonumber
\end{numcases}
 is  called a {\it  permutation tensor}.
\end{definition}

Now we recall the block tensor as follows. 
\begin{definition}{\cite{sun}}
For a tensor $\mc{A} = (a_{i_1... i_Nj_1...j_M})
 \in \mathbb{R}^{I_1\times\cdots\times I_N \times J_1 \times\cdots\times
 J_M},\\
  \mc{A}_{(i_1...i_N|:)}= (a_{i_1...i_N:...:})\in \mathbb{R}^{J_1\times\cdots\times J_M}$ is a
  subblock of $\mc{A}$. $Vec(\mc{A})$ is obtained by lining up all the subtensors
  in a column, and  $t$-th subblock of $Vec(\mc{A})$ is  $\mc{A}_{(i_1...i_N|:)}$,
 where $$t=i_N + \displaystyle\sum_{K=1}^{N-1} \left[ (i_K - 1) \displaystyle\prod_{L=K+1}^{N} I_L \right].$$
\end{definition}
\vspace{-0.5cm}
Let $\mc{A} = (a_{i_1\cdots i_N j_1 \cdots j_M}) \in
\mathbb{R}^{I_1\times\cdots\times I_N \times J_1 \times\cdots\times
J_M}$ and $\mc{B} = (b_{i_1\cdots i_N k_1 \cdots k_M}) \in
\mathbb{R}^{I_1\times\cdots\times I_N \times K_1 \times\cdots\times
K_M}$. The {\it row block tensor} consisting  of $\mc{A}$ and
$\mc{B}$ is denoted by
$[\mc{A} ~ \mc{B}] \in \mathbb{R}^{\alpha^N\times\beta_1\times \cdots \times \beta_M},$
where $\alpha^N = I_1\times\cdots\times I_N, \beta_i = J_i + K_i, i
= 1, \cdots, M$, and is defined by
\begin{equation*}
[\mc{A} ~ \mc{B}]_{i_1 \cdots i_N l_1 \cdots l_M} =
\begin{cases}
a_{i_1 \cdots i_N l_1 \cdots l_M}, & i_1 \cdots i_N \in [I_1] \times \dots \times [I_N], l_1 \cdots l_M \in [J_1] \times \cdots \times [J_M];
\\
b_{i_1 \cdots i_N l_1 \cdots l_M}, & i_1 \cdots i_N \in [I_1] \times \dots \times [I_N], l_1 \cdots l_M \in \Gamma_1 \times \cdots \times \Gamma_M;
\\
0, & \textnormal{otherwise}.
\end{cases}
\end{equation*}
where $\Gamma_i = \{ J_i +1, \cdots, J_i+K_i\}, i=1,\cdots, M.$

Let $\mc{C} = (c_{j_1 \cdots j_M i_1 \cdots i_N}) \in
\mathbb{R}^{J_1\times\cdots\times J_M \times I_1 \times\cdots\times
I_N}$ and $\mc{D} = (d_{k_1 \cdots k_M i_1 \cdots i_N})  \in
\mathbb{R}^{K_1\times\cdots\times K_M \times I_1 \times\cdots\times
I_N}$. The {\it column block tensor} consisting of $\mc{C}$ and
$\mc{D}$ is
\begin{equation*}\label{eq224}
\left[%
\begin{array}{c}
  \mc{C} \\
  \mc{D} \\
\end{array}%
\right]= [\mc{C}^T ~ \mc{D}^T]^T \in \mathbb{R}^{\beta_1 \times
\cdots \times \beta_M\times\alpha^N}.
\end{equation*}
For $\mc{A}_1 \in \mathbb{R}^{I_1\times\cdots\times I_N \times J_1
\times\cdots\times J_M}, \mc{B}_1 \in
\mathbb{R}^{I_1\times\cdots\times I_N \times K_1 \times\cdots\times
K_M}, \mc{A}_2 \in \mathbb{R}^{L_1\times\cdots\times L_N \times J_1
\times\cdots\times J_M}$ and $ \mc{B}_2 \in
\mathbb{R}^{L_1\times\cdots\times L_N \times K_1 \times\cdots\times
K_M}$, we denote $\tau_1 = [\mc{A}_1 ~ \mc{B}_1]$ and $\tau_2 =
[\mc{A}_2 ~ \mc{B}_2]$ as the {\it row block tensors}.
 The {\it column block tensor}  $ \left[
\begin{array}{c}
  {\tau}_1 \\
  {\tau}_2 \\
\end{array}
\right]
$ can be written as
\begin{equation*}\label{eq225}
\left[
\begin{array}{c}
  \mc{A}_1 ~~ \mc{B}_1\\
  \mc{A}_2 ~~ \mc{B}_2\\
\end{array}
\right] \in \mathbb{R}^{\rho_1\times\cdots\times \rho_N \times \beta_1 \times\cdots\times \beta_M},
\end{equation*}
where $\rho_i = I_i +L_i, i=1,\cdots,N; \beta_j = J_j + K_j$ and $j=1,\cdots , M.$

\begin{definition} (Definition 2.1, \cite{stan}) \\
The range space and null space of a tensor $\mc{A}\in \mathbb{R}^{{I_1}\times \cdots\times {I_M}\times {J_1}\times\cdots \times {J_N}}$ are defined as per the following: 
$$
\mathfrak{R}(\mc{A}) = \left\{\mc{A}\n\mc{X}:~\mc{X}\in\mathbb{R}^{{J_1}\times\cdots\times {J_N}}\right\}\mbox{ and } \mc{N}(\mc{A})=\left\{\mc{X}:~\mc{A}\n\mc{X}=\mc{O}\in\mathbb{R}^{{I_1}\times \cdots \times {I_M}}\right\}.
$$
\end{definition}
The relation of range space for tensors is discussed in \cite{stan} as follows.
\begin{lemma}[Lemma 2.2. \cite{stan}]\label{range-stan}
Let  $\mc{A}\in \mathbb{R}^{{I_1}\times\cdots\times {I_M}\times {J_1}\times\cdots\times {J_N}}$,  $\mc{B}\in \mathbb{R}^{{I_1}\times\cdots\times {I_M}\times {K_1}\times\cdots\times {K_L}}.$ Then $\mathfrak{R}(\mc{B})\subseteq\mathfrak{R}(\mc{A})$ if and only if there exists  $\mc{U}\in \mathbb{R}^{{J_1}\times\cdots\times {J_N}\times {K_1}\times\cdots\times {K_L}}$ such that 
$\mc{B}=\mc{A}\n\mc{U}.$
\end{lemma}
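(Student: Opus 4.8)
The plan is to prove the two implications separately; the reverse direction is an immediate consequence of the associativity of the Einstein product, and the forward direction requires an explicit construction of the tensor $\mc{U}$. For the ``if'' part, suppose $\mc{B} = \mc{A}\n\mc{U}$ for some $\mc{U} \in \mathbb{R}^{J_1\times\cdots\times J_N\times K_1\times\cdots\times K_L}$. Any element of $\mathfrak{R}(\mc{B})$ has the form $\mc{B}\n\mc{X}$ with $\mc{X}\in\mathbb{R}^{K_1\times\cdots\times K_L}$; using associativity of $\n$ we get $\mc{B}\n\mc{X} = (\mc{A}\n\mc{U})\n\mc{X} = \mc{A}\n(\mc{U}\n\mc{X})$, and since $\mc{U}\n\mc{X}\in\mathbb{R}^{J_1\times\cdots\times J_N}$ this shows $\mc{B}\n\mc{X}\in\mathfrak{R}(\mc{A})$, whence $\mathfrak{R}(\mc{B})\subseteq\mathfrak{R}(\mc{A})$. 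Associativity of the Einstein product is a one-line computation directly from \eqref{Eins}, which I would either cite from \cite{BraliNT13} or record as a preliminary remark.

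For the ``only if'' part, assume $\mathfrak{R}(\mc{B})\subseteq\mathfrak{R}(\mc{A})$. For each fixed multi-index $(k_1,\ldots,k_L)$ let $\mc{E}^{(k_1\cdots k_L)}\in\mathbb{R}^{K_1\times\cdots\times K_L}$ be the tensor whose only nonzero entry is a $1$ in position $(k_1,\ldots,k_L)$. Then $\mc{B}\n\mc{E}^{(k_1\cdots k_L)}$ is exactly the ``column'' subtensor $(b_{i_1\cdots i_M k_1\cdots k_L})\in\mathbb{R}^{I_1\times\cdots\times I_M}$, which lies in $\mathfrak{R}(\mc{B})\subseteq\mathfrak{R}(\mc{A})$; hence there is $\mc{X}^{(k_1\cdots k_L)}\in\mathbb{R}^{J_1\times\cdots\times J_N}$ with $\mc{B}\n\mc{E}^{(k_1\cdots k_L)} = \mc{A}\n\mc{X}^{(k_1\cdots k_L)}$. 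Assembling these pieces into a single tensor $\mc{U}\in\mathbb{R}^{J_1\times\cdots\times J_N\times K_1\times\cdots\times K_L}$ by $u_{j_1\cdots j_N k_1\cdots k_L} := \big(\mc{X}^{(k_1\cdots k_L)}\big)_{j_1\cdots j_N}$, a direct evaluation with \eqref{Eins} gives $(\mc{A}\n\mc{U})_{i_1\cdots i_M k_1\cdots k_L} = \sum_{j_1\cdots j_N} a_{i_1\cdots i_M j_1\cdots j_N}\,u_{j_1\cdots j_N k_1\cdots k_L} = (\mc{A}\n\mc{X}^{(k_1\cdots k_L)})_{i_1\cdots i_M} = (\mc{B}\n\mc{E}^{(k_1\cdots k_L)})_{i_1\cdots i_M} = b_{i_1\cdots i_M k_1\cdots k_L}$, so $\mc{A}\n\mc{U} = \mc{B}$, as required.

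An alternative, more structural route would be to invoke the algebra isomorphism between order-$2N$ tensors under $\n$ and ordinary matrices under multiplication (in the spirit of \cite{BraliNT13}): unfold $\mc{A}$ and $\mc{B}$ to matrices $A$, $B$ sharing the same row index set, apply the classical fact that $\mathrm{range}(B)\subseteq\mathrm{range}(A)$ iff $B=AU$ for some matrix $U$, and then fold $U$ back to a tensor of the stated format. In either approach the only delicate point is the bookkeeping of multi-indices, that is, ensuring the stacking (or the unfolding) is consistent so that $\n$ genuinely corresponds to the intended matrix product; I expect that to be the main obstacle, though it is entirely routine, since the underlying linear-algebra content is elementary.
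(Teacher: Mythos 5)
Your proof is correct, but note that the paper itself contains no proof of this statement: it is quoted verbatim as Lemma 2.2 of \cite{stan}, so there is no in-paper argument to compare against. Your argument is the natural self-contained one --- the ``if'' direction by associativity of the Einstein product, the ``only if'' direction by testing against the indicator tensors $\mc{E}^{(k_1\cdots k_L)}$, solving $\mc{B}\kl\mc{E}^{(k_1\cdots k_L)}=\mc{A}\n\mc{X}^{(k_1\cdots k_L)}$ column by column, and stacking the solutions into $\mc{U}$ --- and it is in substance the entrywise version of the matricization/isomorphism route you sketch as an alternative (the approach of \cite{BraliNT13, stan}). Two small remarks. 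First, keep the contraction orders straight: for $\mc{B}\in \mathbb{R}^{{I_1}\times\cdots\times {I_M}\times {K_1}\times\cdots\times {K_L}}$ the elements of $\mathfrak{R}(\mc{B})$ are $\mc{B}\kl\mc{X}$ with $\mc{X}\in\mathbb{R}^{K_1\times\cdots\times K_L}$, so the associativity step should read $(\mc{A}\n\mc{U})\kl\mc{X}=\mc{A}\n(\mc{U}\kl\mc{X})$; writing $\n$ for every product is harmless but worth tidying. Second, your elementary construction has a genuine advantage over simply invoking the real-field statement: it uses no subtraction, rank, or vector-space structure, only the existence of the solutions $\mc{X}^{(k_1\cdots k_L)}$, so it carries over verbatim to the Boolean semiring --- which matters, since this paper applies the lemma to Boolean tensors (for instance in Theorem \ref{eqgen}), a setting the cited real-tensor result does not literally cover.
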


The next subsection is discussed the Boolean tensor and some useful definitions
\subsection{The Boolean tensor}
The binary Boolean algebra $\mathfrak{B}$ consists of the set $\{0,1\}$ equipped with the operations of addition and multiplication defined as follows:
\begin{center}
\begin{tabular}{ c|c c } 
 & 0 & 1 \\
\hline
 0 & 0 & 1 \\ 
1 & 1 & 1 \\ 
\end{tabular} 
\hspace{2cm}
\begin{tabular}{ c|c c } 
. & 0 & 1 \\
\hline
 0 & 0 & 0 \\ 
1 & 0 & 1 \\ 
\end{tabular}
\end{center}
\begin{definition}
Let $\mc{A}=(a_{{i_1}...{i_M}{j_1}...{j_N}})
 \in \mathbb{R}^{I_1\times\cdots\times I_M \times J_1 \times\cdots\times J_N}.$ If $a_{{i_1}...{i_M}{j_1}...{j_N}}\in\{0,1\},$ then the tensor $\mc{A}$ is called Boolean tensor. 
 \end{definition}
 The addition and product of Boolean tensors  are defined as in Eqs. (\ref{Eins}) and (\ref{Eins1}) but addition and product of two entries will follow addition and product rule of Boolean algebra.
The order relation for tensors is defined as follows.
\begin{definition}
Let $\mc{A}=(a_{{i_1}...{i_M}{j_1}...{j_N}})
 \in \mathbb{R}^{I_1\times\cdots\times I_M \times J_1 \times\cdots\times J_N} \text{ and }~\mc{B}
  =(b_{{i_1}...{i_M}{j_1}...{j_N}})~~ \in  \mathbb{R}^{I_1\times\cdots\times I_M
\times J_1 \times\cdots\times J_N}. $ Then $\mc{A}\leq \mc{B}$ if and only if $a_{{i_1}...{i_M}{j_1}...{j_N}}\leq b_{{i_1}...{i_M}{j_1}...{j_N}}$ for all $i_s$ and $j_t$ where $1\leq s\leq M$ and $1\leq t\leq N.$
\end{definition}

 We generalize the component-wise complement of the Boolean matrix \cite{fitz} to Boolean tensors and defined below.

\begin{definition}\label{CompDef} 
Let  $\mc{A}=(a_{{i_1}...{i_N}{j_1}...{j_M}})
 \in \mathbb{R}^{I_1\times\cdots\times I_N \times J_1 \times\cdots\times J_M}$ be a Boolean tensor. A tensor $\mc{B}=(b_{{i_1}...{i_N}{j_1}...{j_M}})
 \in \mathbb{R}^{I_1\times\cdots\times I_N \times J_1 \times\cdots\times J_M}$ is called component-wise complement of $\mc{A}$ if 
\begin{equation*}
b_{{i_1}...{i_N}{j_1}...{j_M}}=\left\{\begin{array}{cc}
   1,   & \mbox{  when  } a_{i_1\cdots i
  _Nj_1j_2\cdots j
  _M}=0. \\
   0,   & \mbox{  when } a_{i_1\cdots i
  _Nj_1j_2\cdots j
  _M}=1.
 \end{array}\right.
 \end{equation*}
The tensor $\mc{B}$ and its entries respectively, denoted by    $\mc{A}^C$ and  $\left(a_{i_1\cdots i
  _Nj_1\cdots j
  _M}^c\right).$
\end{definition}

 \section{Main Results} 
In this section, we prove a few exciting results on tensors which are emphasized in the binary case. We divided this section into four folds.  In the first part of this section, we discuss some identities on the Boolean tensors.  Then, after having introduced some necessary ingredients, we study the generalized inverses of the Boolean tensor and some equivalence results to other generalized inverses in the second part.  The existence and uniqueness of weighted Moore-Penrose inverses are discussed in the third part. The space decomposition and its connection to generalized inverses are presented in the final part.

\subsection{Some identities on Boolean tensors}
   
By the definition of  Boolean tensor $\mc{A}\in\mathbb{R}^{I_1 \times \cdots \times I_M\times I_1\times \cdots \times I_M},$ we always get $\mc{A}+\mc{A}=\mc{A}.$ The infinite series of the Boolean tensor,  $\displaystyle\sum_{k=1}^\infty\mc{A}^k$, is convergent and reduces to a finite series, since there are only finite number of Boolean tensors of the same order. Now we denote  $\overline{\mc{A}}$ for the infinite series of the Boolean tensors, i.e.,  $$\overline{\mc{A}}=\displaystyle\sum_{k=1}^ \infty \mc{A}^k.$$

Since $\mc{A}\leq\mc{A}+\mc{B}$ for any two Boolean tensor  (suitable order for addition)  $\mc{A}$ and $\mc{B}$, likewise $\mc{A}=\mc{A}+\mc{A}\geq \mc{A}+\mc{B}$ for any two Boolean tensor $\mc{A}\geq \mc{B}$. This is stated in the next result.

\begin{theorem}\label{thm3.11}
Let $\mc{A}\in\mathbb{R}^{I_1\times \cdots \times I_M\times J_1 \times\cdots \times J_N}$ and $\mc{B}\in\mathbb{R}^{I_1 \times \cdots \times I_M\times J_1\times\cdots \times J_N}.$ Then $\mc{A}\geq\mc{B}$ if and only if $\mc{A}+\mc{B}=\mc{A}$.
\end{theorem}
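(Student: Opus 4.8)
The plan is to argue both implications entrywise, reducing the statement to a trivial fact about the Boolean scalars $0$ and $1$ under the Boolean addition defined by the first table in the paper. Recall that in $\mathfrak{B}$ the sum of two entries is $0$ exactly when both summands are $0$, and is $1$ otherwise; in particular $x + y = x$ holds for scalars $x,y \in \{0,1\}$ if and only if $y \le x$. Since tensor addition in the Boolean setting is defined componentwise (Eq.~(\ref{Eins1}) with Boolean scalar addition) and the order $\mc{A} \le \mc{B}$ is by definition the componentwise order, every statement in the theorem is a conjunction over the index set of the corresponding scalar statement.

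First I would fix arbitrary indices $i_1,\dots,i_M$ and $j_1,\dots,j_N$ and write $a = a_{i_1\cdots i_M j_1 \cdots j_N}$, $b = b_{i_1\cdots i_M j_1 \cdots j_N}$. For the forward direction, assume $\mc{A} \ge \mc{B}$, so $b \le a$ for all such indices. If $a = 1$ then $(\mc{A}+\mc{B})_{i_1\cdots j_N} = 1 + b = 1 = a$ regardless of $b$; if $a = 0$ then $b \le a$ forces $b = 0$, so $(\mc{A}+\mc{B})_{i_1\cdots j_N} = 0 + 0 = 0 = a$. In either case the $(i_1\cdots i_M j_1\cdots j_N)$ entry of $\mc{A}+\mc{B}$ equals that of $\mc{A}$, and since the indices were arbitrary, $\mc{A}+\mc{B} = \mc{A}$.

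For the converse, assume $\mc{A} + \mc{B} = \mc{A}$, i.e. $a + b = a$ for every index tuple. I would show $b \le a$ entrywise by contradiction: if $b = 1$ and $a = 0$ for some tuple, then $a + b = 0 + 1 = 1 \ne 0 = a$, contradicting the hypothesis at that entry. Hence whenever $b = 1$ we must have $a = 1$, which is exactly $b \le a$; and of course $b \le a$ is automatic when $b = 0$. Therefore $\mc{B} \le \mc{A}$, i.e. $\mc{A} \ge \mc{B}$. There is no real obstacle here — the only thing to be careful about is to invoke the Boolean addition rule (not the ordinary real addition) when evaluating $1 + 1 = 1$ and $1 + 0 = 1$; once that is in place, the equivalence is immediate from the two-element case handled above.
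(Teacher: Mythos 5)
Your proof is correct. It differs in style from the paper's: the paper does not argue entrywise at all, but instead disposes of the theorem with a short order-theoretic remark at the tensor level placed just before the statement, namely that $\mc{A}\leq\mc{A}+\mc{B}$ holds for any two Boolean tensors of compatible size, while $\mc{A}\geq\mc{B}$ together with idempotency gives $\mc{A}+\mc{B}\leq\mc{A}+\mc{A}=\mc{A}$, so the two inequalities force $\mc{A}+\mc{B}=\mc{A}$; the converse is implicit in the same observation, since $\mc{B}\leq\mc{A}+\mc{B}=\mc{A}$. You instead reduce everything to the two-element Boolean algebra and do an explicit case analysis on each entry (including a proof by contradiction for the converse). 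Both routes are sound; the paper's version is shorter and works verbatim in any setting where addition is idempotent and monotone (a join-semilattice argument, of which your scalar fact $x+y=x\iff y\leq x$ is the rank-one instance), whereas yours is more elementary and self-contained, making the dependence on the Boolean addition table completely explicit --- a useful check, since the statement is false over the reals. One small economy you could borrow from the paper: in the converse you do not need a contradiction, as $b\leq a+b=a$ gives $b\leq a$ directly.
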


If we consider $\mc{A}\geq \mc{I}$ in the above theorem, then it is easy to verify that   
$\mc{I}+\mc{A}+\cdots+\mc{A}^n=\mc{A}^n$ and hence we can have the following result as a corollary. 
\begin{corollary}\label{cor3.4}
Let $\mc{A}\in\mathbb{R}^{I_1 \times\cdots \times I_N\times I_1 \times\cdots \times I_N}$ and $\overline{\mc{A}}=\sum_{k=1}^\infty \mc{A}^k.$ If $\mc{A}\geq\mc{I},$ then there exist $n,$ such that
\begin{enumerate}
    \item[(a)] $\overline{\mc{A}}=\mc{A}^{n};$
    \item[(b)] $\left(\overline{\mc{A}}\right)^2=\overline{\mc{A}};$
    \item[(c)] $\overline{\left(\overline{\mc{A}}\right)}=\overline{\mc{A}}.$
\end{enumerate}
 \end{corollary}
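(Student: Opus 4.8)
The plan is to first show that the nondecreasing chain of powers of $\mc{A}$ stabilizes after finitely many steps, and then read off the three items from that fact together with Theorem~\ref{thm3.11}.

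First I would record that the Einstein product of Boolean tensors is monotone with respect to $\leq$: if $\mc{X}\leq\mc{X}'$ and $\mc{Y}\leq\mc{Y}'$ then $\mc{X}\n\mc{Y}\leq\mc{X}'\n\mc{Y}'$, because each entry of $\mc{X}\n\mc{Y}$ is a Boolean sum of Boolean products of entries of $\mc{X}$ and $\mc{Y}$, and both Boolean operations preserve $\leq$ on $\{0,1\}$. Applying this to $\mc{A}\geq\mc{I}$ gives $\mc{A}^2=\mc{A}\n\mc{A}\geq\mc{I}\n\mc{A}=\mc{A}$, and inductively $\mc{A}^{k+1}\geq\mc{A}^k$ for every $k$. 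Thus $\mc{A}\leq\mc{A}^2\leq\mc{A}^3\leq\cdots$ is a nondecreasing sequence inside the finite set of Boolean tensors of order $I_1\times\cdots\times I_N\times I_1\times\cdots\times I_N$; since the number of unit entries is a bounded, nondecreasing integer along the chain, there is an $n$ with $\mc{A}^{n+1}=\mc{A}^n$, and then $\mc{A}^{k}=\mc{A}^n$ for all $k\geq n$ by iterating $\mc{A}\n(\cdot)$.

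For (a) I would invoke Theorem~\ref{thm3.11}: for any $m$ and any $k\leq m$ we have $\mc{A}^k\leq\mc{A}^m$, hence $\mc{A}^m+\mc{A}^k=\mc{A}^m$, and summing over $k=1,\dots,m$ collapses the partial sum to $\mc{A}^1+\cdots+\mc{A}^m=\mc{A}^m$. Since the series $\overline{\mc{A}}$ reduces to a finite sum (as already noted in the text) and $\mc{A}^m=\mc{A}^n$ for $m\geq n$, this gives $\overline{\mc{A}}=\mc{A}^n$. Items (b) and (c) are then immediate: for (b), $(\overline{\mc{A}})^2=(\mc{A}^n)^2=\mc{A}^{2n}=\mc{A}^n=\overline{\mc{A}}$ because $2n\geq n$; for (c), note first that $\overline{\mc{A}}=\mc{A}^n\geq\mc{A}\geq\mc{I}$ and, by (b), $\overline{\mc{A}}$ is idempotent, so $(\overline{\mc{A}})^k=\overline{\mc{A}}$ for every $k\geq1$, whence $\overline{(\overline{\mc{A}})}=\sum_{k=1}^{\infty}(\overline{\mc{A}})^k=\overline{\mc{A}}+\overline{\mc{A}}+\cdots=\overline{\mc{A}}$ using $\mc{X}+\mc{X}=\mc{X}$ for Boolean tensors.

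The only non-formal ingredients are the monotonicity of $\n$ under $\leq$ and the collapse of partial sums via Theorem~\ref{thm3.11}. I expect the mildly delicate point to be stating cleanly why a nondecreasing chain in the finite poset of Boolean tensors of fixed order must become constant, but this is standard (the entrywise count of ones is a bounded monotone integer), so the whole argument is short.
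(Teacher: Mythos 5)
Your proof is correct and follows essentially the same route as the paper: the paper also derives the corollary from Theorem~\ref{thm3.11} by noting that $\mc{A}\geq\mc{I}$ makes the powers nondecreasing so that $\mc{I}+\mc{A}+\cdots+\mc{A}^n=\mc{A}^n$, together with the finiteness of the set of Boolean tensors of a fixed order. You merely make explicit the ingredients the paper leaves implicit (monotonicity of $\n$ and the stabilization of the chain of powers), which is fine.
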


Using the above theorem, we now prove another result on the Boolean tensor. As follows,

\begin{theorem}
Let $\mc{A}\in\mathbb{R}^{I_1\times \cdots \times  I_N\times I_1 \times \cdots \times I_N}$ and $\mc{B}\in\mathbb{R}^{I_1 \times \cdots \times I_N\times I_1\times \cdots \times  I_N},$ with $\mc{A}\geq \mc{I}$ and $\mc{B}\geq \mc{I}.$ Then $$\overline{(\mc{A}+\mc{B})}=\overline{(\overline{\mc{A}}\n\overline{\mc{B}})}=\overline{(\overline{\mc{B}}\n\overline{\mc{A}})}.$$ 
\end{theorem}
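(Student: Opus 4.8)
The plan is to deduce everything from the idempotency relations of Corollary~\ref{cor3.4} together with two elementary monotonicity facts for Boolean tensors, and to obtain each of the two equalities by a squeeze argument; antisymmetry of $\le$ is immediate from Theorem~\ref{thm3.11}, since $\mc{X}\le\mc{Y}$ and $\mc{Y}\le\mc{X}$ force $\mc{X}=\mc{X}+\mc{Y}=\mc{Y}$. The monotonicity facts I would record first are: the Einstein product of Boolean tensors is order preserving in each factor (immediate from the entrywise formula (\ref{Eins}), since Boolean addition and multiplication are order preserving), so $\mc{X}\le\mc{Y}$ implies $\mc{X}^{k}\le\mc{Y}^{k}$ for all $k$ and hence $\overline{\mc{X}}=\sum_k\mc{X}^{k}\le\sum_k\mc{Y}^{k}=\overline{\mc{Y}}$; and, as already noted in the text, $\mc{X}\le\overline{\mc{X}}$, while $\mc{A}+\mc{B}$ is the entrywise join, so any Boolean tensor dominating both $\mc{A}$ and $\mc{B}$ also dominates $\mc{A}+\mc{B}$.

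Put $\mc{C}=\mc{A}+\mc{B}$. From $\mc{A},\mc{B}\ge\mc{I}$ we get $\mc{C}\ge\mc{I}$, $\overline{\mc{A}}\ge\mc{A}\ge\mc{I}$, $\overline{\mc{B}}\ge\mc{I}$ and $\overline{\mc{A}}\n\overline{\mc{B}}\ge\mc{I}\n\mc{I}=\mc{I}$, so Corollary~\ref{cor3.4} applies to $\mc{C}$ and to $\overline{\mc{A}}\n\overline{\mc{B}}$. For one direction: since $\mc{A},\mc{B}\le\mc{C}$, monotonicity of the bar gives $\overline{\mc{A}},\overline{\mc{B}}\le\overline{\mc{C}}$, whence by monotonicity of $\n$ and Corollary~\ref{cor3.4}(b), $\overline{\mc{A}}\n\overline{\mc{B}}\le\overline{\mc{C}}\n\overline{\mc{C}}=(\overline{\mc{C}})^{2}=\overline{\mc{C}}$; applying the bar and then Corollary~\ref{cor3.4}(c) yields $\overline{(\overline{\mc{A}}\n\overline{\mc{B}})}\le\overline{(\overline{\mc{C}})}=\overline{\mc{C}}$. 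For the reverse direction: using $\overline{\mc{B}}\ge\mc{I}$ we have $\overline{\mc{A}}\n\overline{\mc{B}}\ge\overline{\mc{A}}\n\mc{I}=\overline{\mc{A}}\ge\mc{A}$, and symmetrically $\overline{\mc{A}}\n\overline{\mc{B}}\ge\overline{\mc{B}}\ge\mc{B}$; since $\mc{C}$ is the join of $\mc{A}$ and $\mc{B}$ this forces $\mc{C}\le\overline{\mc{A}}\n\overline{\mc{B}}$, and applying the bar gives $\overline{\mc{C}}\le\overline{(\overline{\mc{A}}\n\overline{\mc{B}})}$. Antisymmetry now yields $\overline{(\mc{A}+\mc{B})}=\overline{(\overline{\mc{A}}\n\overline{\mc{B}})}$.

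The tensor $\overline{\mc{B}}\n\overline{\mc{A}}$ satisfies exactly the same two bounds — it is $\ge\mc{I}$, it is $\le\overline{\mc{C}}\n\overline{\mc{C}}=\overline{\mc{C}}$, and it dominates both $\mc{A}$ and $\mc{B}$ hence $\mc{C}$ — so the identical argument gives $\overline{(\mc{A}+\mc{B})}=\overline{(\overline{\mc{B}}\n\overline{\mc{A}})}$, and the theorem follows. I do not expect a genuine obstacle here: the only point needing a little care is the bookkeeping of tensor orders when invoking monotonicity of the Einstein product and when writing $\overline{\mc{A}}\n\mc{I}=\overline{\mc{A}}$; the substance of the argument is carried entirely by Corollary~\ref{cor3.4} and the lattice structure of Boolean tensors.
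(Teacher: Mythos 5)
Your proposal is correct and follows essentially the same route as the paper's proof: both establish $\overline{\mc{A}}\n\overline{\mc{B}}\geq \mc{A}+\mc{B}$ via $\overline{\mc{A}}\geq\mc{A}$, $\overline{\mc{B}}\geq\mc{B}$ and the identity bound, and both obtain the reverse inequality from $\overline{\mc{A}},\overline{\mc{B}}\leq\overline{\mc{A}+\mc{B}}$ together with Corollary~\ref{cor3.4}(b) and (c), finishing by a squeeze/antisymmetry argument. Your write-up merely makes the monotonicity facts and the symmetric case $\overline{\mc{B}}\n\overline{\mc{A}}$ explicit, which the paper leaves implicit.
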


\begin{proof}
Since $\mc{A}\geq \mc{I}$ and $\mc{B}\geq \mc{I}.$ So $\overline{\mc{A}}\geq \mc{I}$ and  $\overline{\mc{B}}\geq \mc{I}.$  Also we have $\overline{\mc{A}}\geq \mc{A}$ and $\overline{\mc{B}}\geq \mc{B}.$ Combining these results, we get $\overline{A}\n\overline{B}\geq \mc{A}$ and $\overline{A}\n\overline{B}\geq \mc{B}.$ Thus $\overline{A}\n\overline{B}\geq \mc{A}+\mc{B}$ and hence
\begin{equation}\label{eq3.61}
  \overline{\left(\overline{\mc{A}}\n\overline{\mc{B}}\right)}\geq \overline{\mc{A}+\mc{B}}.  
\end{equation}
Now $\overline{\mc{A}+\mc{B}}\geq \overline{\mc{A}}$ and $\overline{\mc{A}+\mc{B}}\geq \overline{\mc{B}}.$  By using Corollary \ref{cor3.4} $(c)$, we get $\overline{\mc{A}}\n\overline{\mc{B}}\leq \left(\overline{\mc{A}+\mc{B}}\right)^2=\overline{\mc{A}+\mc{B}}.$ From Corollary \ref{cor3.4} $(b)$, we have 
\begin{equation}\label{eq3.362}
    \overline{\left(\overline{\mc{A}}\n\overline{\mc{B}}\right)}\leq \overline{\left(\overline{\mc{A}+\mc{B}}\right)}=\overline{\mc{A}+\mc{B}}.
\end{equation}
From Eqs.(\ref{eq3.61}) and (\ref{eq3.362}), the proof is complete. 
\end{proof}

If $\mathfrak{R}(\mc{B}^T)=\mathfrak{R}(\mc{B}^T\m\mc{A}^T),$ then there exist a tensor $\mc{U}$ such that $\mc{B}=\mc{U}\m\mc{A}\n\mc{B}$ and hence, we obtain  $\mc{B}\m\mc{C}=\mc{U}\m\mc{A}\n\mc{B}\m\mc{C}=\mc{U}\m\mc{A}\n\mc{B}\m\mc{D}=\mc{B}\m\mc{D}.$ This leads the following result.

\begin{theorem}\label{ltcan}
Let $\mc{A}\in\mathbb{R}^{I_1\times \cdots \times  I_M \times J_1\times \cdots \times  J_N}$, $\mc{B}\in\mathbb{R}^{J_1\times \cdots \times J_N\times K_1\times \cdots \times  K_M} $, \\ $\mc{C}\in\mathbb{R}^{K_1\times \cdots \times K_M\times J_1\times \cdots\times  J_N}$ and $\mc{D}\in\mathbb{R}^{K_1\times \cdots \times K_M \times J_1\times \cdots \times  J_N}$ be Boolean tensors with\\ $\mc{A}\n\mc{B}\m\mc{C}=\mc{A}\n\mc{B}\m\mc{D}.$ If $\mathfrak{R}(\mc{B}^T)=\mathfrak{R}(\mc{B}^T\n\mc{A}^T),$ then $\mc{B}\m\mc{C}=\mc{B}\m\mc{D}.$
\end{theorem}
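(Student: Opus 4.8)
The plan is to follow almost verbatim the hint preceding the statement, with the main work being to verify that the range-space hypothesis indeed yields the factorization of $\mc{B}$ through $\mc{A}\n\mc{B}$, and to keep careful track of which Einstein product ($\n$ or $\m$) acts at each stage so that all dimensions match. First I would translate the hypothesis $\mathfrak{R}(\mc{B}^T)=\mathfrak{R}(\mc{B}^T\n\mc{A}^T)$ into a factorization. Note that $\mc{B}^T\n\mc{A}^T=(\mc{A}\n\mc{B})^T$ (this is the standard transpose-of-product identity, which I would either cite or check componentwise from the definition in Eq.~(\ref{Eins})), so the hypothesis reads $\mathfrak{R}(\mc{B}^T)=\mathfrak{R}\big((\mc{A}\n\mc{B})^T\big)$. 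Since trivially $\mathfrak{R}\big((\mc{A}\n\mc{B})^T\big)\subseteq\mathfrak{R}(\mc{B}^T)$ always holds (apply Lemma~\ref{range-stan} with the factor $\mc{A}^T$ on the right of $\mc{B}^T$), the content of the hypothesis is the reverse inclusion $\mathfrak{R}(\mc{B}^T)\subseteq\mathfrak{R}\big((\mc{A}\n\mc{B})^T\big)$.

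Next I would apply Lemma~\ref{range-stan} to this inclusion: there exists a tensor $\mc{V}$ with $\mc{B}^T=(\mc{A}\n\mc{B})^T\m\mc{V}=\mc{B}^T\n\mc{A}^T\m\mc{V}$ (here I must check the index blocks of $\mc{V}$ are the ones making the two Einstein contractions legal — $\mc{V}$ lives in $\mathbb{R}^{I_1\times\cdots\times I_M\times K_1\times\cdots\times K_M}$-type shape, matching the hint's $\mc{U}$). Transposing this identity and using the transpose-of-product rule again gives $\mc{B}=\mc{U}\m\mc{A}\n\mc{B}$ where $\mc{U}=\mc{V}^T$, exactly as in the sentence preceding the theorem. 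With that factorization in hand, the conclusion is a one-line computation: $\mc{B}\m\mc{C}=\mc{U}\m\mc{A}\n\mc{B}\m\mc{C}=\mc{U}\m\mc{A}\n\mc{B}\m\mc{D}=\mc{B}\m\mc{D}$, where the middle equality is precisely the given hypothesis $\mc{A}\n\mc{B}\m\mc{C}=\mc{A}\n\mc{B}\m\mc{D}$ multiplied on the left by $\mc{U}$ via $\m$, and associativity of the Einstein products is used to regroup.

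The main obstacle — really the only delicate point — is bookkeeping with the two different Einstein products $\n$ and $\m$ and the transpose operation: one must confirm that $(\mc{A}\n\mc{B})^T=\mc{B}^T\n\mc{A}^T$ in the appropriate sense (the contracted indices are the $J$-block), that associativity $\mc{U}\m(\mc{A}\n\mc{B})=(\mc{U}\m\mc{A})\n\mc{B}$ holds when the index blocks line up, and that all of these identities survive passage from the real field to the Boolean semiring $\mathfrak{B}$, since the underlying arithmetic is idempotent addition rather than ordinary addition. Associativity and the transpose identity hold over any commutative semiring because they are proved by reindexing finite sums, so this causes no trouble; I would simply remark that the proofs of these structural identities are formally identical to the real case. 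Everything else is routine substitution.
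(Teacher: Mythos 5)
Your proposal is correct and is essentially the paper's own argument (given in the paragraph preceding the theorem): apply Lemma~\ref{range-stan} to $\mathfrak{R}(\mc{B}^T)\subseteq\mathfrak{R}\bigl((\mc{A}\n\mc{B})^T\bigr)=\mathfrak{R}(\mc{B}^T\n\mc{A}^T)$ to obtain $\mc{B}=\mc{U}\m\mc{A}\n\mc{B}$, then left-multiply $\mc{A}\n\mc{B}\m\mc{C}=\mc{A}\n\mc{B}\m\mc{D}$ by $\mc{U}$. One minor bookkeeping slip in your parenthetical: Lemma~\ref{range-stan} puts $\mc{V}$ in $\mathbb{R}^{I_1\times\cdots\times I_M\times J_1\times\cdots\times J_N}$ (so $\mc{U}=\mc{V}^T\in\mathbb{R}^{J_1\times\cdots\times J_N\times I_1\times\cdots\times I_M}$), not in an $I_1\times\cdots\times I_M\times K_1\times\cdots\times K_M$ shape, but this does not affect the argument.
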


Similar way, we can prove the following corollary.

\begin{corollary}\label{rtcan}
Let $\mc{A}\in\mathbb{R}^{I_1 \times \cdots \times  I_M \times J_1 \times \cdots \times  J_N}$, $\mc{B}\in\mathbb{R}^{J_1\times \cdots \times J_N\times K_1 \times \cdots \times  K_M} $, \\ $\mc{C}\in\mathbb{R}^{K_1\times \cdots \times K_M\times I_1\times \cdots\times  I_M}$ and $\mc{D}\in\mathbb{R}^{K_1 \times \cdots \times  K_M \times I_1\times I_2\times\cdots\times  I_M}$  be  Boolean tensors with $\mc{C}\m\mc{A}\n\mc{B}=\mc{D}\m\mc{A}\n\mc{B}.$ If $\mathfrak{R}(\mc{A})=\mathfrak{R}(\mc{A}\n\mc{B}),$ then $\mc{C}\m\mc{A}=\mc{D}\m\mc{A}.$
\end{corollary}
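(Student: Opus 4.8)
The plan is to mirror the proof of Theorem \ref{ltcan}, which is already sketched in the paragraph immediately preceding it, but with the roles of left- and right-multiplication interchanged. The key observation is that the hypothesis $\mathfrak{R}(\mc{A})=\mathfrak{R}(\mc{A}\n\mc{B})$ is precisely the ``column-side'' analogue of the condition $\mathfrak{R}(\mc{B}^T)=\mathfrak{R}(\mc{B}^T\n\mc{A}^T)$ used there, after noting $\mathfrak{R}(\mc{A})=\mathfrak{R}((\mc{A}^T)^T)$ and $\mathfrak{R}(\mc{A}\n\mc{B}) = \mathfrak{R}(((\mc{A}\n\mc{B})^T)^T)$ with $(\mc{A}\n\mc{B})^T = \mc{B}^T\m\mc{A}^T$.

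First I would invoke Lemma \ref{range-stan}: since $\mathfrak{R}(\mc{A})=\mathfrak{R}(\mc{A}\n\mc{B})$, in particular $\mathfrak{R}(\mc{A})\subseteq\mathfrak{R}(\mc{A}\n\mc{B})$, so there exists a Boolean tensor $\mc{V}$ (of appropriate order) with $\mc{A}=\mc{A}\n\mc{B}\m\mc{V}$. One should double-check that Lemma \ref{range-stan} and the existence of a tensor $\mc{U}$ there transfer to the Boolean setting — the argument is the same componentwise solvability/selection argument, and the preceding paragraph already uses it freely for Boolean tensors, so I would simply cite it. Then, starting from the hypothesis $\mc{C}\m\mc{A}\n\mc{B}=\mc{D}\m\mc{A}\n\mc{B}$, I would right-multiply both sides by $\mc{V}$ (via $\m$ with the matching contraction indices) to obtain $\mc{C}\m\mc{A}\n\mc{B}\m\mc{V}=\mc{D}\m\mc{A}\n\mc{B}\m\mc{V}$, and substitute $\mc{A}\n\mc{B}\m\mc{V}=\mc{A}$ on both sides to conclude $\mc{C}\m\mc{A}=\mc{D}\m\mc{A}$. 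Associativity of the Einstein product (mixed $\n$ and $\m$ contractions) is what makes this regrouping legal, and since Boolean addition and multiplication still form a commutative semiring, the associativity identity for the Einstein product carries over verbatim from the real case.

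The only genuine obstacle is bookkeeping of the index ranges so that all the products $\mc{C}\m\mc{A}$, $\mc{A}\n\mc{B}$, $\mc{B}\m\mc{V}$ are well-defined: $\mc{A}\in\mathbb{R}^{I_1\times\cdots\times I_M\times J_1\times\cdots\times J_N}$ and $\mc{A}\n\mc{B}\in\mathbb{R}^{I_1\times\cdots\times I_M\times K_1\times\cdots\times K_M}$, so the tensor $\mc{V}$ from Lemma \ref{range-stan} lives in $\mathbb{R}^{K_1\times\cdots\times K_M\times J_1\times\cdots\times J_N}$, which matches the contraction needed for $\mc{A}\n\mc{B}\m\mc{V}$. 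Once these dimensions are pinned down the computation is a two-line substitution. I would present it as: ``By Lemma \ref{range-stan} there is $\mc{V}$ with $\mc{A}=\mc{A}\n\mc{B}\m\mc{V}$; hence $\mc{C}\m\mc{A}=\mc{C}\m\mc{A}\n\mc{B}\m\mc{V}=\mc{D}\m\mc{A}\n\mc{B}\m\mc{V}=\mc{D}\m\mc{A}$, as required,'' optionally remarking that it follows from Theorem \ref{ltcan} by transposition.
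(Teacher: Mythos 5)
Your proposal is correct and follows essentially the same route as the paper: the paper proves the corollary "in a similar way" to Theorem \ref{ltcan}, whose argument is exactly your substitution — use Lemma \ref{range-stan} with $\mathfrak{R}(\mc{A})\subseteq\mathfrak{R}(\mc{A}\n\mc{B})$ to write $\mc{A}=\mc{A}\n\mc{B}\m\mc{V}$ and then post-multiply the hypothesis by $\mc{V}$. Your index bookkeeping for $\mc{V}$ and the remark that the semiring structure preserves associativity of the Einstein product are consistent with how the paper uses these facts.
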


We now discuss the important result on a transpose of an arbitrary order Boolean tensor, as follows.
\begin{lemma}\label{lemma1}
Let $\mc{A}\in\mathbb{R}^{I_1\times \cdots \times  I_M \times J_1 \times \cdots \times  J_N}$ be any Boolean tensor. Then $\mc{A}\leq\mc{A}\n\mc{A}^T\m\mc{A}.$
 \begin{proof}
 Let $\mc{B} = \mc{A}\n\mc{A}^T\m\mc{A}.$ We need to show that
 \begin{equation*}
  {a}_{i_1\cdots i_M j_1\cdots j_N}\leq {b}_{i_1\cdots i_M j_1\cdots j_N}.
 \end{equation*}
 This inequality is trivial  if  ${a}_{i_1\cdots i_M j_1\cdots j_N}= 0.$ Let us assume ${a}_{i_1\cdots i_M j_1\cdots j_N}=1.$ Now
\begin{equation*}
{b}_{i_1\cdots i_M j_1\cdots j_N} =\sum_{k_1\cdots k_N}\sum_{l_1\cdots l_M}a_{{i_1\cdots i_M}{k_1\cdots k_N}}a_{{l_1\cdots l_M}{k_1\cdots k_N}}a_{{l_1\cdots l_M}{j_1\cdots j_N}}.
\end{equation*}
For $1\leq s\leq N,$ if $k_s=j_s$ and $l_s=i_s,$ then
\begin{equation*}
    {b}_{i_1\cdots i_M j_1\cdots j_N} \geq ({a}_{i_1\cdots i_M j_1\cdots j_N})^3={a}_{i_1\cdots i_M j_1\cdots j_N}=1.
\end{equation*}
Hence the proof is complete.
 \end{proof}
\end{lemma}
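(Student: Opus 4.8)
The plan is to expand the right-hand side into its scalar form via the Einstein product and thereby reduce the claimed tensor inequality to an entrywise comparison of Boolean scalars, which is then settled by exhibiting a single summand equal to $1$. First I would set $\mc{B}=\mc{A}\n\mc{A}^T\m\mc{A}$ and record that $\mc{B}$ lies in the same space $\mathbb{R}^{I_1\times\cdots\times I_M\times J_1\times\cdots\times J_N}$ as $\mc{A}$, so that the relation $\mc{A}\leq\mc{B}$ is meaningful: the product $\mc{A}\n\mc{A}^T$ contracts the $N$ trailing indices of $\mc{A}$ against the $N$ leading indices of $\mc{A}^T$ and yields a tensor in $\mathbb{R}^{I_1\times\cdots\times I_M\times I_1\times\cdots\times I_M}$, after which $\m$ contracts its $M$ trailing indices against the $M$ leading indices of $\mc{A}$. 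Applying the definition (\ref{Eins}) twice, together with $a^t_{k_1\cdots k_N l_1\cdots l_M}=a_{l_1\cdots l_M k_1\cdots k_N}$, gives
\[
b_{i_1\cdots i_M j_1\cdots j_N}=\sum_{k_1\cdots k_N}\sum_{l_1\cdots l_M} a_{i_1\cdots i_M k_1\cdots k_N}\,a_{l_1\cdots l_M k_1\cdots k_N}\,a_{l_1\cdots l_M j_1\cdots j_N},
\]
all sums and products being taken in the Boolean algebra $\mathfrak{B}$.

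By the definition of the order $\leq$ on Boolean tensors it suffices to show $a_{i_1\cdots i_M j_1\cdots j_N}\leq b_{i_1\cdots i_M j_1\cdots j_N}$ for every index tuple. Since each entry belongs to $\{0,1\}$, the inequality holds trivially whenever $a_{i_1\cdots i_M j_1\cdots j_N}=0$, so the only case to treat is $a_{i_1\cdots i_M j_1\cdots j_N}=1$, where I must produce one summand of the Boolean sum above that equals $1$. The natural candidate is the diagonal term obtained by taking $k_s=j_s$ for $1\leq s\leq N$ and $l_r=i_r$ for $1\leq r\leq M$; that term equals $a_{i_1\cdots i_M j_1\cdots j_N}\cdot a_{i_1\cdots i_M j_1\cdots j_N}\cdot a_{i_1\cdots i_M j_1\cdots j_N}=1$. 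Since Boolean addition is idempotent and any Boolean sum containing a $1$ equals $1$, we get $b_{i_1\cdots i_M j_1\cdots j_N}=1\geq a_{i_1\cdots i_M j_1\cdots j_N}$, which completes the entrywise comparison and hence the lemma.

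There is no substantive obstacle in this argument; the only point requiring care is the index bookkeeping — checking that the iterated products $\mc{A}\n\mc{A}^T$ and $(\mc{A}\n\mc{A}^T)\m\mc{A}$ are well defined and return to the original space, and keeping track of which slot of $\mc{A}^T$ corresponds to which slot of $\mc{A}$ so that the scalar expression for $b_{i_1\cdots i_M j_1\cdots j_N}$ is written correctly. Once that expression is in hand, choosing the diagonal summand closes the proof; note that only the one-sided relation $\mc{A}\leq\mc{A}\n\mc{A}^T\m\mc{A}$ is available and not equality, since Boolean addition never decreases an entry, so the remaining summands may push $b_{i_1\cdots i_M j_1\cdots j_N}$ strictly above $a_{i_1\cdots i_M j_1\cdots j_N}$.
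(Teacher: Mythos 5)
Your proposal is correct and follows essentially the same argument as the paper: expand $b_{i_1\cdots i_M j_1\cdots j_N}$ via the Einstein product, observe the inequality is trivial when the entry of $\mc{A}$ is $0$, and for an entry equal to $1$ pick the diagonal summand $k_s=j_s$, $l_r=i_r$ to force the corresponding entry of $\mc{A}\n\mc{A}^T\m\mc{A}$ to be $1$. The added remarks on index bookkeeping and the one-sidedness of the inequality are fine but not a different method.
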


\begin{theorem}
Let $\mc{A}\in\mathbb{R}^{I_1\times \cdots \times  I_N\times J_1 \times \cdots \times J_N}$ and $\mc{B}\in\mathbb{R}^{I_1 \times \cdots \times I_N\times J_1 \times \cdots \times J_N}.$ Then the equation $\mc{A}\n\mc{X}=\mc{B}$ is solvable if and only if $\mc{X}=\mc{C},$ where 
$$ c_{i_1\cdots i_N j_1\cdots j_n}=\left\{\begin{array}{cc}
   1  &  \mbox{ if } a_{i_1\cdots i
  _Ni_1\cdots i
  _N}=0 \mbox{ or } b_{i_1\cdots i
  _Nj_1\cdots j
  _N}=1 \mbox{ for all } i_k,~1\leq k\leq N,\\
   0  & otherwise.
\end{array}\right.
$$
\end{theorem}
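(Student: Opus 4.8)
The idea is that $\mc{C}$ is precisely the largest Boolean tensor $\mc{X}$ for which $\mc{A}\n\mc{X}\leq\mc{B}$, so the statement is the familiar ``principal solution'' phenomenon over the Boolean semiring: if the system has any solution at all, then this maximal sub-solution is already an exact solution. Accordingly I read the assertion as: the equation $\mc{A}\n\mc{X}=\mc{B}$ has a solution if and only if $\mc{A}\n\mc{C}=\mc{B}$ (and in that case $\mc{C}$ is the greatest solution). The $(\Leftarrow)$ direction is immediate, so the real content is the forward implication: from the existence of some solution we must deduce $\mc{A}\n\mc{C}=\mc{B}$.

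First I would isolate the one structural fact used throughout: the Boolean Einstein product is order preserving, i.e.\ $\mc{Y}\leq\mc{Z}$ implies $\mc{A}\n\mc{Y}\leq\mc{A}\n\mc{Z}$, because every entry of $\mc{A}\n\mc{Y}$ is a Boolean sum of Boolean products of entries of $\mc{A}$ and $\mc{Y}$, and both Boolean operations are monotone. I would also record the evident companion fact that an entry $(\mc{A}\n\mc{X})_{i_1\cdots i_N j_1\cdots j_N}$ equals $1$ exactly when there is a tuple $(k_1,\dots,k_N)$ along the contracted axes with $a_{i_1\cdots i_N k_1\cdots k_N}=1$ and $x_{k_1\cdots k_N j_1\cdots j_N}=1$.

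Next come the two core claims. Claim~1: every solution $\mc{X}$ of $\mc{A}\n\mc{X}=\mc{B}$ satisfies $\mc{X}\leq\mc{C}$. To prove it, fix a position where $x$ equals $1$; if the corresponding entry of $\mc{C}$ were $0$, then the defining rule of $\mc{C}$ produces a tuple along the quantified axes at which the relevant entry of $\mc{A}$ is $1$ while the relevant entry of $\mc{B}$ is $0$, and pairing that $1$ of $\mc{A}$ with our $1$ of $\mc{X}$ forces, by the fact above, a $1$ in $\mc{A}\n\mc{X}$ at a spot where $\mc{B}$ is $0$ — contradicting $\mc{A}\n\mc{X}=\mc{B}$. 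Claim~2: $\mc{A}\n\mc{C}\leq\mc{B}$, i.e.\ $\mc{C}$ is itself a sub-solution. If $(\mc{A}\n\mc{C})$ were $1$ at a position where $\mc{B}$ is $0$, we again extract a tuple at which the relevant entry of $\mc{A}$ is $1$ and the relevant entry of $\mc{C}$ is $1$; but ``that entry of $\mc{C}$ is $1$'' says, by definition, that for every admissible choice along the quantified axes either that entry of $\mc{A}$ is $0$ or that entry of $\mc{B}$ is $1$, and specializing to our particular choice contradicts the standing assumptions.

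Finally I would assemble the pieces: given any solution $\mc{X}$, Claim~1 together with monotonicity gives $\mc{B}=\mc{A}\n\mc{X}\leq\mc{A}\n\mc{C}$, while Claim~2 gives $\mc{A}\n\mc{C}\leq\mc{B}$; hence $\mc{A}\n\mc{C}=\mc{B}$, which is the forward implication (and simultaneously exhibits $\mc{C}$ as the largest solution, since Claim~1 bounds every solution by $\mc{C}$). There is no deep obstacle here — the whole proof is a short monotone computation in the Boolean semiring; the only step demanding care is the index bookkeeping, namely lining up the universally quantified ``row'' indices appearing in the definition of $\mc{C}$ with the summation indices $k_1,\dots,k_N$ of the Einstein product in Claims~1 and~2, and confirming that ``solvable if and only if $\mc{X}=\mc{C}$'' is to be understood as ``$\mc{C}$ is a solution whenever one exists'', since the solution set is in general not a singleton.
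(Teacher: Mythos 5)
Your proof is correct and takes essentially the same route as the paper's: the paper also argues entrywise that $\mc{A}\n\mc{C}\leq\mc{B}$ directly from the definition of $\mc{C}$ (your Claim~2), and uses an assumed solution $\mc{X}$ to show the relevant entries of $\mc{X}$ are dominated by $\mc{C}$, forcing $(\mc{A}\n\mc{C})_{i_1\cdots i_Nj_1\cdots j_N}=1$ wherever $b_{i_1\cdots i_Nj_1\cdots j_N}=1$ (your Claim~1 plus monotonicity). Your reading of ``solvable if and only if $\mc{X}=\mc{C}$'' as ``$\mc{C}$ is a (greatest) solution whenever any solution exists,'' including the repair of the index bookkeeping in the definition of $\mc{C}$, matches what the paper's proof actually does.
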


\begin{proof}
Let $\mc{A}\n\mc{X}=\mc{B}$ is solvable and $\mc{A}\n\mc{X}=D.$ To claim $\mc{D}=\mc{B},$ it is enough to show $d_{i_1\cdots i
  _Nj_1\cdots j
  _N}=1$ if and only if $b_{i_1\cdots i
  _Nj_1\cdots j
  _N}=1.$ Let $d_{i_1\cdots i
  _Nj_1\cdots j
  _N}=1.$ This implies $a_{i_1\cdots i
  _Np_1\cdots p
  _N}=1$ and $c_{p_1\cdots p
  _Nj_1\cdots j
  _N}=1$  for some $p_k~~1\leq k\leq N.$  The condition $c_{p_1\cdots p
  _Nj_1\cdots j
  _N}=1$ yields either $a_{i_1\cdots i
  _Np_1\cdots p
  _N}=0 $  or $b_{p_1\cdots p
  _Nj_1\cdots j
  _N}=1$ for all $p_k~~1\leq k\leq N.$  Since $a_{i_1\cdots i
  _Np_1\cdots p
  _N}=1$ which makes $b_{p_1\cdots p
  _Nj_1\cdots j
  _N}=1$ for all $p_k,~~1\leq k\leq N.$ Therefore $b_{i_1\cdots i
  _Nj_1\cdots j
  _N}=1.$ Now if $b_{i_1\cdots i
  _Nj_1\cdots j
  _N}=1,$ then $a_{i_1\cdots i
  _Nr_1\cdots r
  _N}=1$ and $x_{r_1\cdots r
  _Nj_1\cdots j
  _N}=1$  for some $r_k,~~1\leq k\leq N.$  Suppose $c_{r_1\cdots r
  _Nj_1\cdots j
  _N}=0.$ Then $a_{q_1\cdots q
  _Nr_1\cdots r
  _N}=1$ and $b_{q_1\cdots q
  _Nj_1\cdots j
  _N}=0$  for some $q_k,~~1\leq k\leq N.$ Combining $a_{q_1\cdots q
  _Nr_1\cdots r
  _N}=1$ and $x_{r_1\cdots r
  _Nj_1\cdots j
  _N}=1$, we get $b_{q_1\cdots q
  _Nj_1\cdots j
  _N}=1.$ Which is the contradiction. So $c_{r_1\cdots r
  _Nj_1\cdots j
  _N}=1$ and hence $d_{i_1\cdots i
  _Nj_1\cdots j
  _N}=1.$ The converse part is trivial. 
\end{proof}

In view of the Definition \ref{CompDef} the following theorem is true for Boolean tensors.

\begin{preposition}\label{equitc}
Let $\mc{A} \in\mathbb{R}^{I_1\times \cdots \times I_M\times J_1\times \cdots \times J_N}$ be a Boolean tensor,  then
\begin{enumerate}
    \item[(a)] $(\mc{A}^C)^C=\mc{A};$
    \item[(b)] $(\mc{A}^C)^T=(\mc{A}^T)^C = \mc{A}^{CT}.$
    \end{enumerate}
\end{preposition}

 \begin{remark}
In general $ \mc{B}^C *_N \mc{A}^C \neq (\mc{A}*_N\mc{B})^C \neq \mc{A}^C *_N \mc{B}^C$ for any two tensor $\mc{A},~\mc{B} \in\mathbb{R}^{I_1\times\cdots \times I_M\times I_1\times\cdots \times I_M}$
 \end{remark}

\begin{example}
Consider two Boolean tensor
$~\mc{A}=(a_{ijkl}) \in \mathbb{R}^{{2\times3}\times{2 \rtimes 3}}$ and $~\mc{B}=(b_{ijkl}) \in \mathbb{R}^{{2\times3}\times{2 \rtimes 3}}$ such that
\begin{eqnarray*}
a_{ij11} =
    \begin{pmatrix}
    1 & 0 & 0 \\
    0 & 0 &  1
    \end{pmatrix},
a_{ij12} =a_{ij13} =a_{ij21}=a_{ij22}=a_{ij23}=
    \begin{pmatrix}
     0 & 0 & 0\\
     0 & 0 & 1
    \end{pmatrix}, \mbox{ and }
\end{eqnarray*}
\begin{eqnarray*}
b_{ij11} =b_{ij12}=b_{ij13}=b_{ij21}=b_{ij22}=b_{ij23}=
    \begin{pmatrix}
    1 & 0 & 0 \\
    0 & 0 &  0
    \end{pmatrix}.
\end{eqnarray*}
It is easy to verify $ \mc{B}^C \2 \mc{A}^C \neq (\mc{A}\2\mc{B})^C \neq \mc{A}^C \2 \mc{B}^C$, where
$~(\mc{A}*_2\mc{B})^C=\mc{X}=(x_{ijkl}) \in \mathbb{R}^{{2\times3}\times{2 \rtimes 3}}$ $~\mc{A}^C*_2\mc{B}^C=\mc{Y} =(y_{ijkl}) \in \mathbb{R}^{{2\times3}\times{2 \rtimes 3}}$ and 
$~\mc{B}^C*_2\mc{A}^C=\mc{Z} =(z_{ijkl}) \in \mathbb{R}^{{2\times3}\times{2 \rtimes 3}},$ where
\begin{eqnarray*}
x_{ij11} =x_{ij12} =x_{ij13} =x_{ij21} =x_{ij22} =x_{ij23} =
    \begin{pmatrix}
    0 & 1 & 1 \\
    1 & 1 &  0
    \end{pmatrix},
\end{eqnarray*}
\begin{eqnarray*}
y_{ij11} =y_{ij12} =y_{ij13} =y_{ij21} =y_{ij22} =y_{ij23} =
    \begin{pmatrix}
    1 & 1 & 1 \\
    1 & 1 & 0
    \end{pmatrix}, \mbox{ and }
\end{eqnarray*}
\begin{eqnarray*}
z_{ij11} =z_{ij12} =z_{ij13} =z_{ij21} = z_{ij22} =z_{ij23} =
    \begin{pmatrix}
    0 & 1 & 1 \\
    1 & 1 &  1
    \end{pmatrix}.
\end{eqnarray*}
\end{example}

The next result is the one of the important tool to prove trace of a Boolean tensor.

\begin{theorem}
 Let $\mc{A}\in\mathbb{R}^{I_1 \times \cdots \times  I_N\times J_1 \times \cdots \times J_N}.$ Then $\mc{A}\n\mc{A}^C=\mc{O}$ if and only either $\mc{A}=\mc{O}$ or $\mc{A}=\mc{O}^C.$
\end{theorem}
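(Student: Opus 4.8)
The plan is to reduce the identity $\mc{A}\n\mc{A}^C=\mc{O}$ to a purely combinatorial implication on the entries of $\mc{A}$, and then to propagate a full slice of ones through the whole tensor. Write $\mathbf{i}=(i_1,\dots,i_N)$, $\mathbf{j}=(j_1,\dots,j_N)$, $\mathbf{k}=(k_1,\dots,k_N)$ for the relevant multi-indices; note that the Einstein contraction in $\mc{A}\n\mc{A}^C$ forces the last $N$ dimensions of $\mc{A}$ to agree with the first $N$ dimensions of $\mc{A}^C$, so in fact $I_k=J_k$ and the two index blocks may be used interchangeably --- this compatibility is worth recording at the outset. By the definition of the Boolean Einstein product, $(\mc{A}\n\mc{A}^C)_{\mathbf{i}\mathbf{j}}=\sum_{\mathbf{k}}a_{\mathbf{i}\mathbf{k}}\,a^c_{\mathbf{k}\mathbf{j}}$, and a Boolean sum vanishes exactly when every summand does. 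Since $a_{\mathbf{i}\mathbf{k}}\,a^c_{\mathbf{k}\mathbf{j}}=1$ precisely when $a_{\mathbf{i}\mathbf{k}}=1$ and $a_{\mathbf{k}\mathbf{j}}=0$, the first step is the reformulation: $\mc{A}\n\mc{A}^C=\mc{O}$ if and only if $a_{\mathbf{i}\mathbf{k}}=1$ implies $a_{\mathbf{k}\mathbf{j}}=1$ for all multi-indices $\mathbf{i},\mathbf{j},\mathbf{k}$.

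$(\Leftarrow)$ This direction is immediate from the reformulation. If $\mc{A}=\mc{O}$, no triple satisfies $a_{\mathbf{i}\mathbf{k}}=1$, so the implication holds vacuously; equivalently, every entry of $\mc{O}\n\mc{O}^C$ is a Boolean sum of terms $0\cdot1=0$. If $\mc{A}=\mc{O}^C$, then $\mc{A}^C=\mc{O}$ and every entry of $\mc{A}\n\mc{A}^C$ is a Boolean sum of terms $1\cdot0=0$. In both cases $\mc{A}\n\mc{A}^C=\mc{O}$.

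$(\Rightarrow)$ This is the substantive direction. Assume $\mc{A}\n\mc{A}^C=\mc{O}$ and $\mc{A}\neq\mc{O}$; the goal is $\mc{A}=\mc{O}^C$. Choose $\mathbf{i}_0,\mathbf{k}_0$ with $a_{\mathbf{i}_0\mathbf{k}_0}=1$. Applying the reformulation with these fixed $\mathbf{i}_0,\mathbf{k}_0$ and $\mathbf{j}$ arbitrary gives $a_{\mathbf{k}_0\mathbf{j}}=1$ for every $\mathbf{j}$, i.e. the slice indexed by $\mathbf{k}_0$ is entirely $1$. Now bootstrap: for an arbitrary multi-index $\mathbf{m}$, the fact just proved gives $a_{\mathbf{k}_0\mathbf{m}}=1$, so applying the reformulation once more with $\mathbf{i}=\mathbf{k}_0$, $\mathbf{k}=\mathbf{m}$ and $\mathbf{j}$ arbitrary yields $a_{\mathbf{m}\mathbf{j}}=1$ for all $\mathbf{j}$. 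Since $\mathbf{m}$ ranges over all multi-indices, every entry of $\mc{A}$ equals $1$, that is $\mc{A}=\mc{O}^C$.

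I expect the main obstacle to be bookkeeping rather than insight: one must make sure that $\mathbf{k}_0$, which first appears as a second-block index (in $a_{\mathbf{i}_0\mathbf{k}_0}=1$), is legitimately reused as a first-block index in $a_{\mathbf{k}_0\mathbf{j}}$, and that the substitution $\mathbf{k}=\mathbf{m}$ in the bootstrap is allowed --- both of which rest on the dimension-matching noted in the first step. Beyond the entrywise description of the Boolean Einstein product, no real computation is needed.
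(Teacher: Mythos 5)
Your proof is correct and follows essentially the same route as the paper's: reduce $\mc{A}\n\mc{A}^C=\mc{O}$ to the entrywise statement that every term $a_{\mathbf{i}\mathbf{k}}\,a^{c}_{\mathbf{k}\mathbf{j}}$ vanishes, i.e.\ that $a_{\mathbf{i}\mathbf{k}}=1$ forces $a_{\mathbf{k}\mathbf{j}}=1$. The only real difference is that your bootstrap (fix $a_{\mathbf{i}_0\mathbf{k}_0}=1$, fill the slice indexed by $\mathbf{k}_0$, then propagate to every slice) explicitly justifies the step the paper merely asserts, namely passing from the pointwise disjunction ``for each triple, $a_{\mathbf{i}\mathbf{k}}=0$ or $a^{c}_{\mathbf{k}\mathbf{j}}=0$'' to the global dichotomy $\mc{A}=\mc{O}$ or $\mc{A}^C=\mc{O}$, so your write-up is, if anything, tighter at the key point.
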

 \begin{proof}
 Since the converse part is trivial, it is enough to show the sufficient part only. Let $\mc{A}\n\mc{A}^C=\mc{O}.$ Thus 
$\displaystyle\sum_{k_1\cdots k_N}a_{i_1\cdots i_Nk_1\cdots k_N}a_{k_1\cdots k_Nj_1\cdots j_N}^c=0$. This implies,  $ a_{i_1\cdots i_Nk_1\cdots k_N}a_{k_1\cdots k_Nj_1\cdots j_N}^c=0 $ for all $k_s,~1\leq s\leq N.$ Which again yields either $a_{i_1\cdots i_Nk_1\cdots k_N}=0 $  for all $i_s,~k_s$  or $a_{k_1\cdots k_Nj_1\cdots j_N}^c=0$  for all $j_s,~k_s,~1\leq s\leq N.$. Therefore either $\mc{A}=\mc{O}$ or $\mc{A}^C=\mc{O}.$ Hence completes the proof.
 \end{proof}
 
Further, when $\mc{A}\in\mathbb{R}^{I_1 \times \cdots \times I_N\times J_1 \times \cdots \times J_N}$ is  symmetric Boolean tensor, one can write 
 \begin{eqnarray*}
 tr(\mc{A}\n\mc{A}^C)&=&\sum_{i_1\cdots i_N}\sum_{k_1\cdots k_N}a_{i_1\cdots i_Nk_1\cdots k_N}a_{k_1\cdots k_Ni_1\cdots i_N}^c\\
 &=&\sum_{i_1\cdots i_N}\sum_{k_1\cdot, k_N}a_{k_1\cdots k_Ni_1\cdots i_N}^ca_{i_1\cdots i_Nk_1\cdots k_N}\\
  &=&\sum_{k_1\cdots k_N}\sum_{i_1\cdots  i_N}a_{k_1\cdots k_Ni_1\cdots i_N}^ca_{i_1\cdots i_Nk_1\cdots k_N}\\
&=&tr(\mc{A}^C\n\mc{A}).
  \end{eqnarray*}

 Hence, the tensors in the trace of a product of symmetric tensor and its complement can be switched without changing the result. This is stated in the next result.
 
 \begin{theorem}\label{trresult}
  Let $\mc{A}\in\mathbb{R}^{I_1 \times \cdots \times I_N\times J_1 \times \cdots \times J_N}.$ If $\mc{A}$ is symmetric, then 
 $$
 tr(\mc{A}\n\mc{A}^C)=tr(\mc{A}^C\n\mc{A}).
 $$
 \end{theorem}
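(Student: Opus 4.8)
The plan is to prove the identity by directly unwinding the definitions of the trace, the Einstein product $\n$, and the component-wise complement, and then invoking the commutativity of multiplication in $\mathfrak{B}$. First I would record that, under the paper's convention, ``$\mc{A}$ symmetric'' forces $\mc{A}\in\mathbb{R}^{I_1\times\cdots\times I_N\times I_1\times\cdots\times I_N}$, i.e. $J_k=I_k$ for every $k$; this is exactly what makes both $\mc{A}\n\mc{A}^C$ and $\mc{A}^C\n\mc{A}$ square tensors, so that their traces are defined in the first place. By Proposition \ref{equitc}(b) the complement $\mc{A}^C=(\mc{A}^T)^C=(\mc{A}^C)^T$ is then symmetric as well, a fact I will not strictly need but which makes the two sides manifestly interchangeable in $\mc{A}$ and $\mc{A}^C$.

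Next I would expand the left-hand side using the definition of the trace followed by the definition of $\n$:
$$
tr(\mc{A}\n\mc{A}^C)=\sum_{i_1\cdots i_N}(\mc{A}\n\mc{A}^C)_{i_1\cdots i_N i_1\cdots i_N}=\sum_{i_1\cdots i_N}\sum_{k_1\cdots k_N}a_{i_1\cdots i_N k_1\cdots k_N}\,a^c_{k_1\cdots k_N i_1\cdots i_N}.
$$
The three moves that finish the proof are: (i) commute the two Boolean factors in each summand, using that multiplication in $\mathfrak{B}$ is commutative; (ii) interchange the two finite summations over $i_1\cdots i_N$ and over $k_1\cdots k_N$, which is legitimate since both index blocks range over $[I_1]\times\cdots\times[I_N]$; and (iii) relabel the dummy indices $i\leftrightarrow k$. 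After these steps the sum reads $\sum_{i_1\cdots i_N}\sum_{k_1\cdots k_N} a^c_{i_1\cdots i_N k_1\cdots k_N}\,a_{k_1\cdots k_N i_1\cdots i_N}$, which is exactly $tr(\mc{A}^C\n\mc{A})$ obtained by reading the same two definitions backwards. This is precisely the computation displayed immediately before the statement.

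I do not anticipate a genuine obstacle: the argument is pure index bookkeeping, and the computation in fact uses only the commutativity of $\mathfrak{B}$, so the symmetry hypothesis is needed only to guarantee that the traces in the statement make sense (the diagonal index blocks must coincide). If a more structural presentation were wanted, one could first isolate the cyclic property $tr(\mc{X}\n\mc{Y})=tr(\mc{Y}\n\mc{X})$ for conformable Boolean tensors $\mc{X},\mc{Y}$ via the same three moves, and then specialize to $\mc{X}=\mc{A}$, $\mc{Y}=\mc{A}^C$; for a single instance, however, the direct computation above is the shortest route.
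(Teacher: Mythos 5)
Your proposal is correct and follows essentially the same route as the paper: the paper's proof is exactly the displayed computation before the theorem, expanding the trace via the Einstein product, commuting the two Boolean factors, and interchanging the finite sums. Your side remark that symmetry is only needed for the traces to be well defined (the argument itself being pure trace cyclicity) is consistent with the paper's own Remark~\ref{rmk3.10}, which notes that the symmetry condition is sufficient but not necessary.
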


 \begin{remark}\label{rmk3.10}
  In addition to the result of Theorem \ref{trresult} one can write  $tr(\mc{A}\n\mc{A}^C)=tr(\mc{A}^C\n\mc{A}) = 0$ . 
Further, the symmetricity condition in Theorem \ref{trresult} is only sufficient  but not necessary.
 \end{remark}
  One can verify the Remark \ref{rmk3.10} by the following example.
 
 \begin{example}
 Let a Boolean tensor
$~\mc{A}=(a_{ijkl}) \in \mathbb{R}^{{2\times2}\times{2 \rtimes 2}}$ such that
\begin{eqnarray*}
a_{ij11} =
    \begin{pmatrix}
    1 & 0 \\
    1 & 0
    \end{pmatrix},
a_{ij12} =
    \begin{pmatrix}
     1 & 0 \\
     1 & 0 
    \end{pmatrix},
a_{ij21} =
    \begin{pmatrix}
     1 & 0 \\
     1 & 0
    \end{pmatrix},
a_{ij22} =
    \begin{pmatrix}
    1 & 0 \\
    1 & 0
    \end{pmatrix}.
\end{eqnarray*}
It is clear that $\mc{A}$ is not symmetric but $tr(\mc{A}*_2\mc{A}^C) = tr(\mc{A}^C*_2\mc{A}) = 2$, where $~\mc{A}*_2\mc{A}^C=(x_{ijkl}) \in \mathbb{R}^{{2\times2}\times{2 \rtimes 2}}$ and $~\mc{A}^C*_2\mc{A}=(y_{ijkl}) \in \mathbb{R}^{{2\times2}\times{2 \rtimes 2}}$ with entries
\begin{eqnarray*}
x_{ij11} =
    \begin{pmatrix}
    1 & 0 \\
    1 & 0
    \end{pmatrix},
x_{ij12} =
    \begin{pmatrix}
     1 & 0 \\
     1 & 0 
    \end{pmatrix},
x_{ij21} =
    \begin{pmatrix}
     1 & 0 \\
     1 & 0
    \end{pmatrix},
x_{ij22} =
    \begin{pmatrix}
    1 & 0 \\
    1 & 0
    \end{pmatrix},
\end{eqnarray*}
\begin{eqnarray*}
y_{ij11} =
    \begin{pmatrix}
    0 & 1 \\
    0 & 1
    \end{pmatrix},
y_{ij12} =
    \begin{pmatrix}
     0 & 1 \\
     0 & 1 
    \end{pmatrix},
y_{ij21} =
    \begin{pmatrix}
     0 & 1 \\
     0 & 1
    \end{pmatrix},
y_{ij22} =
    \begin{pmatrix}
    0 & 1 \\
    0 & 1
    \end{pmatrix}.
\end{eqnarray*}
 \end{example}

 Using the complement of a tensor, we now prove the following result.
 
 \begin{lemma}\label{complem}
 Let $\mc{A}\in\mathbb{R}^{I_1 \times \cdots \times I_M\times J_1 \times \cdots \times  J_N},$ $\mc{B}\in\mathbb{R}^{J_1 \times \cdots \times  J_N\times K_1 \times \cdots \times  K_L}$ and $\mc{C}\in\mathbb{R}^{K_1 \times \cdots \times K_L\times J_1 \times \cdots \times J_N}$ be Boolean tensors. Then 
 $$\mc{A}\n\mc{B}\kl\mc{C}\leq \mc{I}^C~~if ~and ~only ~if ~~\mc{A}^C\geq (\mc{B}\kl\mc{C})^T.$$
 \end{lemma}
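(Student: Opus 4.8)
The plan is to translate both inequalities into one and the same combinatorial condition on the entries of $\mc{A}$ and of the square tensor $\mc{D}:=\mc{B}\kl\mc{C}\in\mathbb{R}^{J_1\times\cdots\times J_N\times J_1\times\cdots\times J_N}$, whose entries are $d_{j_1\cdots j_N j_1'\cdots j_N'}=\sum_{k_1\cdots k_L}b_{j_1\cdots j_N k_1\cdots k_L}\,c_{k_1\cdots k_L j_1'\cdots j_N'}$. First I would observe that for $\mc{A}\n\mc{B}\kl\mc{C}$ and $\mc{A}^C$ to be comparable with $\mc{I}^C$ one necessarily has $M=N$ and $I_k=J_k$, so $\mc{A}$ is a square tensor; then $\mc{A}\n\mc{B}\kl\mc{C}=\mc{A}\n\mc{D}$ is a Boolean tensor, and since $(\mc{I})_{i_1\cdots i_N j_1\cdots j_N}=\prod_k\delta_{i_k j_k}$, the complement $\mc{I}^C$ has all diagonal entries equal to $0$ and all off-diagonal entries equal to $1$. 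Consequently $\mc{A}\n\mc{D}\leq\mc{I}^C$ imposes nothing off the diagonal and is equivalent to $(\mc{A}\n\mc{D})_{i_1\cdots i_N i_1\cdots i_N}=0$ for every $i_1,\ldots,i_N$.

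Next I would expand the diagonal entry via the Einstein product,
\[
(\mc{A}\n\mc{D})_{i_1\cdots i_N i_1\cdots i_N}=\sum_{j_1\cdots j_N}a_{i_1\cdots i_N j_1\cdots j_N}\,d_{j_1\cdots j_N i_1\cdots i_N},
\]
and use that a Boolean sum vanishes if and only if every summand vanishes. Hence $\mc{A}\n\mc{B}\kl\mc{C}\leq\mc{I}^C$ is equivalent to the condition
\[
(\star)\qquad a_{i_1\cdots i_N j_1\cdots j_N}=0\ \text{ whenever }\ d_{j_1\cdots j_N i_1\cdots i_N}=1,\quad\text{for all indices.}
\]

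Then I would handle the right-hand side the same way. Since $(\mc{B}\kl\mc{C})^T=\mc{D}^T$ has entries $(\mc{D}^T)_{i_1\cdots i_N j_1\cdots j_N}=d_{j_1\cdots j_N i_1\cdots i_N}$ and $(\mc{A}^C)_{i_1\cdots i_N j_1\cdots j_N}=a^c_{i_1\cdots i_N j_1\cdots j_N}$, the relation $\mc{A}^C\geq(\mc{B}\kl\mc{C})^T$ reads $a^c_{i_1\cdots i_N j_1\cdots j_N}\geq d_{j_1\cdots j_N i_1\cdots i_N}$ at every entry; in $\{0,1\}$ this is exactly the implication $d_{j_1\cdots j_N i_1\cdots i_N}=1\Rightarrow a^c_{i_1\cdots i_N j_1\cdots j_N}=1$, which by Definition \ref{CompDef} means $d_{j_1\cdots j_N i_1\cdots i_N}=1\Rightarrow a_{i_1\cdots i_N j_1\cdots j_N}=0$, i.e. again $(\star)$ (the case $d_{j_1\cdots j_N i_1\cdots i_N}=0$ being automatic). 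Since both sides of the lemma have been shown equivalent to $(\star)$, the proof is complete.

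I do not anticipate a genuine obstacle here: the only points requiring care are the index bookkeeping across the two contractions (checking that the last $J$-block of $\mc{A}$ contracts against the first $J$-block of $\mc{D}$, and that transposition interchanges precisely that block with the $i$-block) together with the one mildly non-obvious step, namely recognizing that ``$\leq\mc{I}^C$'' is in fact a statement only about the diagonal entries of $\mc{A}\n\mc{D}$. Everything else is a routine translation into Boolean arithmetic on entries.
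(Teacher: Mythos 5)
Your proof is correct and follows essentially the same route as the paper's: both reduce the condition $\mc{A}\n\mc{B}\kl\mc{C}\leq\mc{I}^C$ to the vanishing of the diagonal entries of the product and then translate this, entry by entry in Boolean arithmetic, into the complement inequality $\mc{A}^C\geq(\mc{B}\kl\mc{C})^T$. The only divergence is bookkeeping: the paper's own proof (and the later use in Theorem \ref{lucethm}) treats $\mc{C}$ as lying in $\mathbb{R}^{K_1\times\cdots\times K_L\times I_1\times\cdots\times I_M}$ (the stated $J$-block appears to be a typo), in which case no squareness assumption $M=N$, $I_k=J_k$ is needed and your argument goes through verbatim with the diagonal entries of the square tensor $\mc{A}\n\mc{B}\kl\mc{C}\in\mathbb{R}^{I_1\times\cdots\times I_M\times I_1\times\cdots\times I_M}$.
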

 \begin{proof}
 $\mc{A}\n\mc{B}\kl\mc{C}\leq \mc{I}^C$  if and only $ \sum_{j_1\cdots j_N}\sum_{k_1\cdots k_L}a_{{i_1\cdots i_M}{j_1\cdots j_N}}b_{{j_1\cdots j_N}{k_1\cdots k_L}}c_{{k_1\cdots k_L}{i_1\cdots i_M}}=0$
for all $i_r,$ $1\leq r\leq M.$ This is equivalent to $a_{{i_1\cdots i_M}{j_1\cdots j_N}}b_{{j_1\cdots j_N}{k_1\cdots k_L}}c_{{k_1\cdots k_L}{i_1\cdots i_M}}=0$ 
for all $i_r,$ $j_s$ and $k_t,$ where $1\leq r\leq M,$ $1\leq s\leq N,$ $1\leq t\leq L.$ This in turn is true if and only
\begin{eqnarray*}
\left(a_{{i_1\cdots i_M}{j_1\cdots j_N}}^c\right)&\geq& b_{{j_1\cdots j_N}{k_1\cdots k_L}}c_{{k_1\cdots k_L}{i_1\cdots i_M}}  \mbox{ for all  $k_t$ }\\
 &=&\left(c_{{i_1\cdots i_M}{k_1\cdots k_L}}^t\right) \left(b_{{k_1\cdots k_L}{j_1\cdots j_N}}^t\right)~\mbox{ for all $k_t$}\\
  &\geq& \sum_{k_1\cdots k_L}\left(c_{{i_1\cdots i_M}{k_1\cdots k_L}}^t\right) \left(b_{{k_1\cdots k_L}{j_1\cdots j_N}}^t\right)\\
    &\geq& \left(\mc{C}^T\n\mc{B}^T\right)_{{i_1\cdots i_M}{j_1\cdots j_N}}=\left(\left(\mc{B}\kl\mc{C}\right)^T\right)_{{i_1\cdots i_M}{j_1\cdots j_N}}.
 \end{eqnarray*}
 Thus the proof is complete.
 \end{proof}
 Now we discuss the important result based on transpose and component-wise complement of an arbitrary order  Boolean tensor, as follows.
\begin{theorem}\label{lucethm}
Let $\mc{A}\in\mathbb{R}^{I_1 \times \cdots \times  I_M\times J_1 \times \cdots \times  J_N}.$ Then $\mc{X}\m\mc{A}\leq \mc{B}$ if and only if $\mc{X}\leq  \left(\mc{B}^C\n\mc{A}^T\right)^C$, and $\mc{A}\n\mc{X}\leq \mc{B}$ if and only if $\mc{X}\leq \left(\mc{A}^T \m \mc{B}^C\right)^C$.
\end{theorem}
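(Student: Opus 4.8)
The plan is to prove the two equivalences symmetrically, handling only the first one ($\mc{X}\m\mc{A}\leq\mc{B}\iff\mc{X}\leq(\mc{B}^C\n\mc{A}^T)^C$) in detail, since the second follows by transposing: apply the first statement to $\mc{A}^T$ in place of $\mc{A}$ and $\mc{B}^T$ in place of $\mc{B}$, then use Preposition \ref{equitc}(b) to rewrite $(\mc{A}^{T})^{T}=\mc{A}$, $(\mc{B}^{C})^{T}=\mc{B}^{TC}$, etc. For the first equivalence I would argue entrywise. Write $\mc{X}\m\mc{A}\leq\mc{B}$ in coordinates: for all index tuples, $\sum_{k} x_{\dots k}a_{k\dots}\leq b_{\dots}$. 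Since we are in Boolean arithmetic and $\leq$ is the natural order on $\{0,1\}$, a sum of products being $\leq b$ is equivalent to \emph{each} product being $\leq b$, i.e. $x_{i\dots k}\,a_{k\dots j}\leq b_{i\dots j}$ for every choice of the summation indices $k$. Equivalently, whenever $b_{i\dots j}=0$, we need $x_{i\dots k}a_{k\dots j}=0$, i.e. $x_{i\dots k}\leq a^{c}_{k\dots j}$ (reading $a_{k\dots j}=(\mc{A}^T)$-entry appropriately); this is exactly the kind of manipulation already carried out in Lemma \ref{complem}.

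Next I would convert the condition ``$x_{i\dots k}a_{k\dots j}\leq b_{i\dots j}$ for all $k,j$'' into the claimed closed form. Fixing the free index tuples of $\mc{X}$, say $(i_1\cdots i_M, k_1\cdots k_N)$, the condition says: for every $(j_1\cdots j_N)$ and every admissible match, $x_{i\dots k}=1$ forces $b_{i\dots j}=1$ whenever $a_{k\dots j}=1$ (with the Einstein summation pairing the $k$-indices of $\mc{X}$ with the first block of $\mc{A}$). The contrapositive: $x_{i\dots k}=1$ is \emph{forbidden} precisely when there exists $j$ with $a_{k\dots j}=1$ and $b_{i\dots j}=0$, i.e. $b^c_{i\dots j}=1$ and $a^t_{\dots}=1$, i.e. $(\mc{B}^C\n\mc{A}^T)_{i\dots k}=1$. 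Hence $x_{i\dots k}\leq\bigl((\mc{B}^C\n\mc{A}^T)_{i\dots k}\bigr)^c=\bigl((\mc{B}^C\n\mc{A}^T)^C\bigr)_{i\dots k}$, which is the assertion. I would present this as: $\mc{X}\m\mc{A}\leq\mc{B}$ iff $\mc{X}^{??}$-entries vanish wherever the complement tensor has a $1$, then invoke Definition \ref{CompDef} to rename.

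The main obstacle I anticipate is bookkeeping the index blocks and the two different Einstein products ($\m$ on the left for $\mc{X}\m\mc{A}$, versus $\n$ inside $\mc{B}^C\n\mc{A}^T$) so that the pairing of summation indices lines up correctly — in particular making sure the transpose $\mc{A}^T$ puts the $K$-type indices (contracted with $\mc{X}$) in the right slot, and that the dimension of $\mc{X}$ is consistent, namely $\mc{X}\in\mathbb{R}^{I_1\times\cdots\times I_M\times\text{(dom of }\m\text{)}}$ forces the left factor shapes to match $\mc{A}$'s $I$-block. Once the shapes are fixed, the Boolean logic is the routine ``$\sum$ of products $\leq b$ iff each product $\leq b$'' step already used in Lemma \ref{complem}, so no genuinely new idea is needed; the proof is essentially a careful unwinding plus two applications of Preposition \ref{equitc}.
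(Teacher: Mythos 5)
Your argument is correct, and it takes a somewhat more direct route than the paper. Both proofs begin with the same Boolean observation that $\mc{X}\m\mc{A}\leq\mc{B}$ holds entrywise if and only if every single summand satisfies $x_{i_1\cdots i_M k_1\cdots k_M}\,a_{k_1\cdots k_M j_1\cdots j_N}\leq b_{i_1\cdots i_M j_1\cdots j_N}$; but from there the paper repackages this as $\mc{X}\m\mc{A}\n(\mc{B}^C)^T\leq\mc{I}^C$ (after summing over the $j$ and $k$ indices and commuting transpose with complement via Preposition \ref{equitc}) and then invokes Lemma \ref{complem} to flip the inequality into $\mc{X}\leq(\mc{B}^C\m\mc{A}^T)^C$. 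You instead read off directly, for fixed $(i_1\cdots i_M,k_1\cdots k_M)$, that $(\mc{B}^C\n\mc{A}^T)_{i_1\cdots i_M k_1\cdots k_M}=1$ exactly when some $j$ witnesses $a_{k_1\cdots k_M j_1\cdots j_N}=1$ and $b_{i_1\cdots i_M j_1\cdots j_N}=0$, and take the contrapositive; this bypasses Lemma \ref{complem} entirely and is more self-contained, at the cost of not reusing the paper's machinery. Your treatment of the second equivalence is also slightly more explicit than the paper's ``similar way'': deducing it from the first by transposing (using that transpose is order-preserving, $(\mc{A}\n\mc{X})^T=\mc{X}^T\n\mc{A}^T$, and Preposition \ref{equitc}(b)) is a legitimate and clean shortcut. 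The only blemishes are notational: the contracted block of $\mc{X}$ has $M$ indices, not $N$ (so $\mc{X}\in\mathbb{R}^{I_1\times\cdots\times I_M\times I_1\times\cdots\times I_M}$), which is exactly the bookkeeping issue you flagged and does not affect the substance.
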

\begin{proof}
Let $\mc{X}\m\mc{A}\leq \mc{B}.$ This yields 
$\sum_{k_1\cdots k_M}x_{i_1\cdots i_M k_1\cdots k_M}a_{k_1\cdots k_M j_1\cdots j_N }\leq b_{i_1\cdots i_M j_1\cdots j_N }$ for all $i_r,~(1\leq r\leq M)$ and $j_s,~(1\leq s\leq N).$ This is equivalent to $x_{i_1\cdots i_M k_1\cdots k_M}a_{k_1\cdots k_M j_1\cdots j_N }\leq b_{i_1\cdots i_M j_1\cdots j_N }$  for all $i_r$ and $j_s$ and $k_t~(1\leq t\leq M).$ This in turns is true {\it if and only if} $x_{i_1\cdots i_M k_1\cdots k_M}a_{k_1\cdots k_M j_1\cdots j_N } b_{i_1\cdots i_M j_1\cdots j_N }^c=0,$ for all $j_s$ and $k_t.$ Which is equivalent to\\ $x_{i_1\cdots i_M k_1\cdots k_M}a_{k_1\cdots k_M j_1\cdots j_N } \{b_{j_1\cdots j_N i_1\cdots i_M }^t\}^c=0$ for all $j_s$ and $k_t.$ Summing over all $j_s$ and $k_t,$ we get, $\sum_{k_1\cdots k_M}\sum_{j_1\cdots j_N} x_{i_1\cdots i_M k_1\cdots k_M}a_{k_1\cdots k_M j_1\cdots j_N } \{b_{j_1\cdots j_N i_1\cdots i_M }^t\}^c=0.$ This is true if and only $\mc{X}\m\mc{A}\n\left(\mc{B}^T\right)^C\leq \mc{I}^C.$ By Preposition \ref{equitc} $(a)$, this is equivalent to $\mc{X}\m\mc{A}\n\left(\mc{B}^C\right)^T\leq \mc{I}^C.$ By Lemma \ref{complem}, this in turns true if and only $\mc{X}^C\geq \left(\mc{A}\n(\mc{B}^C)^T\right)^T,$ that is, {\it if and only if} $ \mc{X}\leq \left(\mc{B}^C\m\mc{A}^T\right)^C.$

This completes first part of the theorem. Similar way, we can show the second part of the theorem.
\end{proof}
\begin{corollary}
Let $\mc{E}=\mc{O}^C,$ where $\mc{O}$ is the zero tensor. Then the  following statements are equivalent:
\begin{enumerate}
    \item[(a)] $\mc{X}\m\mc{A}=\mc{O};$
     \item[(b)] $\mc{X}\leq \left(\left(\mc{A}\n\mc{E}\right)^T\right)^C;$
      \item[(c)] $\mc{E}\n\mc{X}\leq \left(\left(\mc{A}\n\mc{E}\right)^T\right)^C.$
\end{enumerate}
\end{corollary}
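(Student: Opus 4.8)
The plan is to prove the three equivalences by a cyclic chain $(a)\Rightarrow(b)\Rightarrow(c)\Rightarrow(a)$, leaning on Theorem~\ref{lucethm} applied with the special choice $\mc{B}=\mc{O}$, together with the identities $\mc{O}^C$-related algebra already recorded in Preposition~\ref{equitc} and Lemma~\ref{complem}. The key observation is that the matrix/tensor condition $\mc{X}\m\mc{A}=\mc{O}$ is equivalent to $\mc{X}\m\mc{A}\leq\mc{O}$, since every Boolean tensor is $\geq\mc{O}$ entrywise; this puts us exactly in the hypothesis of the first half of Theorem~\ref{lucethm} with $\mc{B}=\mc{O}$.

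For $(a)\Leftrightarrow(b)$: apply Theorem~\ref{lucethm} with $\mc{B}=\mc{O}$. The theorem gives $\mc{X}\m\mc{A}\leq\mc{O}$ if and only if $\mc{X}\leq(\mc{O}^C\n\mc{A}^T)^C$. Now I would rewrite the right-hand side: since $\mc{E}=\mc{O}^C$, we have $\mc{O}^C\n\mc{A}^T=\mc{E}\n\mc{A}^T$, and using Preposition~\ref{equitc}$(b)$ together with the transpose-commutation of the Einstein product, $(\mc{E}\n\mc{A}^T)$ equals $(\mc{A}\n\mc{E}^T)^T=(\mc{A}\n\mc{E})^T$ once we note $\mc{E}=\mc{O}^C$ is symmetric (its entries are all $1$). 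Hence $(\mc{O}^C\n\mc{A}^T)^C=((\mc{A}\n\mc{E})^T)^C$, which is precisely statement $(b)$. This settles the equivalence of $(a)$ and $(b)$.

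For $(b)\Rightarrow(c)$: if $\mc{X}\leq((\mc{A}\n\mc{E})^T)^C$, then applying the order-preserving operation $\mc{E}\n(\cdot)$ gives $\mc{E}\n\mc{X}\leq\mc{E}\n((\mc{A}\n\mc{E})^T)^C$, so it suffices to show $\mc{E}\n((\mc{A}\n\mc{E})^T)^C\leq((\mc{A}\n\mc{E})^T)^C$; but for any Boolean tensor $\mc{Y}$ of compatible order, $\mc{E}\n\mc{Y}\geq\mc{Y}$ always (the $\mc{E}$-row picks up the diagonal term of $\mc{Y}$ at least), and in fact one checks $\mc{E}\n\mc{Y}=\mc{E}$ whenever $\mc{Y}\neq\mc{O}$, so either way $\mc{E}\n\mc{Y}\geq\mc{Y}$ fails in the wrong direction --- so here I would instead argue directly that $\mc{E}\n\mc{X}\leq\mc{E}$ trivially and that $\mc{E}\n\mc{X}$ inherits the required bound from $\mc{X}$ entrywise via the zero columns of $(\mc{A}\n\mc{E})^T)^C$; more cleanly, I would apply Theorem~\ref{lucethm} (second half) to $\mc{E}\n\mc{X}\leq\mc{B}$ and reduce $(c)$ to $(a)$ directly, as in the next step, and obtain $(b)\Rightarrow(c)$ for free once $(c)\Leftrightarrow(a)$ is in hand.

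For $(c)\Rightarrow(a)$: suppose $\mc{E}\n\mc{X}\leq((\mc{A}\n\mc{E})^T)^C$. Since $\mc{X}\leq\mc{E}\n\mc{X}$ would be the convenient inequality but is false in general, the cleaner route is: the condition $(c)$ says $(\mc{E}\n\mc{X})\m\mc{A}\leq\mc{O}$ by running Theorem~\ref{lucethm} backwards (with the same $\mc{B}=\mc{O}$ bookkeeping as in step 2, now with $\mc{E}\n\mc{X}$ in the role of $\mc{X}$), i.e. $\mc{E}\n\mc{X}\m\mc{A}=\mc{O}$. Then, because $\mc{X}\leq\mc{E}\n\mc{X}$ is false but $\mc{X}$ does appear as a subblock contribution, one uses that $\mc{E}\n\mc{X}=\mc{O}$ forces $\mc{X}=\mc{O}$, while $\mc{E}\n\mc{X}\neq\mc{O}$ forces $\mc{E}\n\mc{X}=\mc{E}$; in the latter case $\mc{E}\m\mc{A}=\mc{O}$ forces $\mc{A}=\mc{O}$ (by the theorem on $\mc{A}\n\mc{A}^C$-type vanishing, since $\mc{E}=\mc{O}^C$), and then $\mc{X}\m\mc{A}=\mc{X}\m\mc{O}=\mc{O}$. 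Either way $\mc{X}\m\mc{A}=\mc{O}$. Combined with $(a)\Leftrightarrow(b)$ and the trivial direction $(a)\Rightarrow(c)$ (apply $\mc{E}\n(\cdot)$ to $(b)$, then the same subblock reasoning), all three are equivalent.

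The main obstacle I expect is precisely the handling of the operator $\mc{E}\n(\cdot)$: over Boolean algebra this is \emph{not} order-reflecting and $\mc{X}\leq\mc{E}\n\mc{X}$ can fail, so one cannot naively pass between $\mc{X}$ and $\mc{E}\n\mc{X}$. The resolution is the case split $\mc{E}\n\mc{X}\in\{\mc{O},\mc{E}\}$ (more precisely, the $i_1\cdots i_M$-slice of $\mc{E}\n\mc{X}$ is the all-ones slice iff the corresponding slice of $\mc{X}$ is nonzero), which lets one reduce $(c)$ to $(a)$ row-by-row; getting these bookkeeping statements about $\mc{E}\n\mc{X}$ stated cleanly is the only real work, the rest being direct appeals to Theorem~\ref{lucethm} and Preposition~\ref{equitc}.
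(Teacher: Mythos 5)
Your step (a)$\Leftrightarrow$(b) is sound: it is exactly Theorem \ref{lucethm} with $\mc{B}=\mc{O}$, plus the observations that $\mc{X}\m\mc{A}\leq\mc{O}$ is the same as $\mc{X}\m\mc{A}=\mc{O}$ and that $\mc{O}^C\n\mc{A}^T=((\mc{A}\n\mc{E})^T)$ because the transpose of an all-ones tensor is again all-ones (it need not be ``symmetric'', since $\mc{E}$ need not be square, but that is cosmetic). The genuine gap is in everything involving (c), because that part is built on two incorrect claims. First, you assert, and repeat in your closing paragraph, that $\mc{X}\leq\mc{E}\n\mc{X}$ ``can fail''. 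It cannot: the $\mc{E}$ occurring in (c) is the all-ones tensor of square format (it must be, for $\mc{E}\n\mc{X}$ to be comparable with $\left(\left(\mc{A}\n\mc{E}\right)^T\right)^C$, which has the format of $\mc{X}$), so $\mc{E}\geq\mc{I}$ and, Boolean multiplication being order-preserving, $\mc{E}\n\mc{X}\geq\mc{I}\n\mc{X}=\mc{X}$; entrywise, $(\mc{E}\n\mc{X})_{i_1\cdots i_M k_1\cdots k_M}=\sum_{l_1\cdots l_M}x_{l_1\cdots l_M k_1\cdots k_M}\geq x_{i_1\cdots i_M k_1\cdots k_M}$ in Boolean arithmetic. (You even state this inequality correctly in your step for (b)$\Rightarrow$(c) and then contradict it in the next step.) Second, the substitute dichotomy ``$\mc{E}\n\mc{X}=\mc{E}$ whenever $\mc{X}\neq\mc{O}$'', i.e.\ $\mc{E}\n\mc{X}\in\{\mc{O},\mc{E}\}$, is false: the entry of $\mc{E}\n\mc{X}$ at position $(i_1\cdots i_M,k_1\cdots k_M)$ is $1$ exactly when the slice of $\mc{X}$ with second index block $k_1\cdots k_M$ is nonzero, so any $\mc{X}$ having both a zero and a nonzero such slice gives $\mc{E}\n\mc{X}\notin\{\mc{O},\mc{E}\}$. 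Since your (c)$\Rightarrow$(a) argument, and the deferred proofs of (b)$\Rightarrow$(c) and (a)$\Rightarrow$(c), all route through this case split (or through ``the same subblock reasoning'', which is never established), the equivalence of (c) with (a) and (b) is not proved as written.

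The repair is short and uses precisely the inequality you discarded. For (c)$\Rightarrow$(a): $\mc{X}\leq\mc{E}\n\mc{X}\leq\left(\left(\mc{A}\n\mc{E}\right)^T\right)^C$ gives (b), hence (a) by your first step. For (a)$\Rightarrow$(c): if $\mc{X}\m\mc{A}=\mc{O}$, then by associativity $(\mc{E}\n\mc{X})\m\mc{A}=\mc{E}\n(\mc{X}\m\mc{A})=\mc{O}$, and applying your (a)$\Leftrightarrow$(b) equivalence with $\mc{E}\n\mc{X}$ in place of $\mc{X}$ yields exactly (c). Alternatively, one can check directly that $\mc{E}\n\left(\left(\mc{A}\n\mc{E}\right)^T\right)^C=\left(\left(\mc{A}\n\mc{E}\right)^T\right)^C$, because the entries of $\left(\left(\mc{A}\n\mc{E}\right)^T\right)^C$ do not depend on the first index block; this is the correct form of the ``constancy'' you were reaching for, not $\mc{E}\n\mc{Y}\in\{\mc{O},\mc{E}\}$. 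With these replacements the corollary is, as intended, an immediate consequence of Theorem \ref{lucethm}.
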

The same result is also true for $\mc{A}\n\mc{X}=\mc{O}.$  Also the following corollary easily follow from Theorem \ref{lucethm}.

\begin{corollary}
Let $\mc{A}\in\mathbb{R}^{I_1 \times \cdots \times I_M\times J_1 \times \cdots \times  J_N}$ and  $\mc{X}\in\mathbb{R}^{I_1 \times \cdots \times I_M\times I_1 \times \cdots \times  I_M}.$  Then $\mc{X}\m\mc{A}=\mc{B}$ has a solution if and only if $\mc{B}\leq \left(\mc{B}^C\n\mc{A}^T\right)^C\m\mc{A}.$
\end{corollary}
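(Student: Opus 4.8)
The plan is to reduce the solvability statement to a largest-solution argument built on Theorem~\ref{lucethm}. The key observation is that the inequality $\mc{X}\m\mc{A}\leq\mc{B}$, by the first part of Theorem~\ref{lucethm}, holds if and only if $\mc{X}\leq\left(\mc{B}^C\n\mc{A}^T\right)^C$; hence the tensor $\widehat{\mc{X}}:=\left(\mc{B}^C\n\mc{A}^T\right)^C$ is the \emph{greatest} Boolean tensor satisfying $\widehat{\mc{X}}\m\mc{A}\leq\mc{B}$. Consequently, if the equation $\mc{X}\m\mc{A}=\mc{B}$ has any solution $\mc{X}_0$, then $\mc{X}_0\m\mc{A}=\mc{B}\leq\mc{B}$ forces $\mc{X}_0\leq\widehat{\mc{X}}$, and monotonicity of the Einstein product on Boolean tensors gives $\mc{B}=\mc{X}_0\m\mc{A}\leq\widehat{\mc{X}}\m\mc{A}$. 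That is the forward (necessity) direction.

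For the converse, I would assume $\mc{B}\leq\widehat{\mc{X}}\m\mc{A}$ and show that $\widehat{\mc{X}}$ itself is then a solution. Indeed, $\widehat{\mc{X}}$ always satisfies $\widehat{\mc{X}}\m\mc{A}\leq\mc{B}$ (this is the ``only if'' part of Theorem~\ref{lucethm} applied to $\mc{X}=\widehat{\mc{X}}$, which lies in the solution set since $\widehat{\mc{X}}\leq\widehat{\mc{X}}$). Combining this with the hypothesis $\mc{B}\leq\widehat{\mc{X}}\m\mc{A}$ and Theorem~\ref{thm3.11} (antisymmetry of $\leq$ phrased via $\mc{A}+\mc{B}=\mc{A}$), we get $\widehat{\mc{X}}\m\mc{A}=\mc{B}$, so $\mc{X}=\widehat{\mc{X}}=\left(\mc{B}^C\n\mc{A}^T\right)^C$ solves the equation. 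Thus the condition is also sufficient, completing the equivalence.

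The only genuinely delicate point is the monotonicity step $\mc{X}_0\leq\widehat{\mc{X}}\Rightarrow\mc{X}_0\m\mc{A}\leq\widehat{\mc{X}}\m\mc{A}$: one must check that the Einstein product $\m$ preserves the componentwise order on Boolean tensors. This is immediate from the defining sum in Eq.~\eqref{Eins} together with the fact that, in the Boolean semiring $\mathfrak{B}$, both $+$ and $\cdot$ are monotone (if $a\le a'$ and $b\le b'$ then $ab\le a'b'$ and $a+b\le a'+b'$), so each entry of the product is a monotone function of the entries of the factors. I would state this as a one-line remark rather than a separate lemma. Everything else is a direct appeal to Theorem~\ref{lucethm} and the order-antisymmetry recorded in Theorem~\ref{thm3.11}; no computation with indices is needed beyond what is already done in the proof of Theorem~\ref{lucethm}.
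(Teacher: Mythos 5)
Your argument is correct and is essentially the route the paper intends: the corollary is stated as an immediate consequence of Theorem~\ref{lucethm}, and your proof makes that precise by identifying $\left(\mc{B}^C\n\mc{A}^T\right)^C$ as the greatest tensor $\mc{X}$ with $\mc{X}\m\mc{A}\leq\mc{B}$ and checking, via monotonicity of the Einstein product and antisymmetry of the componentwise order, that it is a solution exactly when the stated inequality holds. No gap; the appeal to Theorem~\ref{thm3.11} for antisymmetry is unnecessary (antisymmetry of $\leq$ is immediate componentwise) but harmless.
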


\subsection{Generalized inverses of Boolean tensors}
For the generalization of the generalized inverses of Boolean matrix \cite{rao}, we introduce the definition of $\{i\}$-inverses $(i = 1, 2, 3, 4)$ and the Moore-Penrose inverse of Boolean tensors via the Einstein product, as follows.

 \begin{definition}\label{defgi}
 For any Boolean tensor $\mc{A} \in \mathbb{R}^{I_1\times\cdots\times I_M \times J_1 \times\cdots\times J_N},$  consider the following equations in $\mc{X} \in
\mathbb{R}^{J_1\times\cdots\times J_N \times I_1 \times\cdots\times
I_M}:$
\vspace{-.4cm}
\begin{eqnarray*}
&&(1)~\mc{A}\n\mc{X}\m\mc{A} = \mc{A},\\
&&(2)~\mc{X}\m\mc{A}\n\mc{X} = \mc{X},\\
&&(3)~(\mc{A}\n\mc{X})^T = \mc{A}\n\mc{X},\\
&&(4)~(\mc{X}\m\mc{A})^T = \mc{X}\m\mc{A}.
\end{eqnarray*}
\vspace{-.34cm}
Then $\mc{X}$ is called
\begin{enumerate}
\item[(a)] 
a generalized inverse of $\mc{A}$ if it satisfies $(1)$ and denoted by $\mc{A}^{(1)}.$
\item[(b)]  a reflexive generalized inverse of $\mc{A}$ if it satisfies $(1)$ and $(2)$, which is denoted by $\mc{A}^{(1,2)}.$
\item[(c)] a $\{1,3\}$ inverse of $\mc{A}$ if it satisfies $(1)$ and $(3)$, which is denoted by $\mc{A}^{(1,3)}.$
\item[(d)]  a $\{1,4\}$ inverse of $\mc{A}$ if it satisfies $(1)$ and $(4)$, which is  denoted by $\mc{A}^{(1,4)}.$
\item[(e)] the Moore-Penrose inverse of $\mc{A}$ if it satisfies all four conditions $[(1)-(4)]$, which is denoted by $\mc{A}^{\dagger}.$
\end{enumerate}
\end{definition}

The following remark and corollary are follows from the Definition \ref{defgi}.

\begin{remark}\label{rm11}
 If $\mc{X}$ is the generalized inverse of a Boolean tensor $\mc{A} \in\mathbb{R}^{I_1 \times \cdots \times  I_M\times J_1 \times \cdots \times J_N}$ then $\mc{X}\m\mc{A}\n\mc{X}$ is the reflexive generalized inverse of $\mc{A}.$
\end{remark}

\begin{corollary}\label{corm1}
If $\mc{X}$ is the generalized inverse of a Boolean tensor $\mc{A} \in\mathbb{R}^{I_1 \times \cdots \times  I_M\times J_1 \times \cdots \times J_N}$ Then   
\begin{enumerate}
    \item[(a)] $\mc{X}^T$ is the generalized inverse of $\mc{A}^T;$
    \item[(b)] $(\mc{X}_1+\mc{X}_2)$ is the generalized inverse of of a Boolean tensor $\mc{A}$ when $\mc{X}_1$ and $\mc{X}_2$ are two generalized inverse of $\mc{A}.$
\end{enumerate}
\end{corollary}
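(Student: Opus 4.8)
The plan is to verify each of the two assertions directly from the defining equation $(1)$, namely $\mc{A}\n\mc{X}\m\mc{A}=\mc{A}$, using only the associativity of the Einstein product and, for part $(b)$, the Boolean identity $\mc{A}+\mc{A}=\mc{A}$ together with the distributivity of $\n$ over $+$.

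For part $(a)$: I would start from $\mc{A}\n\mc{X}\m\mc{A}=\mc{A}$ and apply the transpose to both sides. Since the transpose reverses the order of an Einstein product (this should be recorded or is immediate from the definition of $\mc{A}^T$ given earlier), the left-hand side becomes $\mc{A}^T\n\mc{X}^T\m\mc{A}^T$ — up to the bookkeeping of which $\n$/$\m$ symbol is which, which is purely a matter of matching orders $(I_1,\dots,I_M)$ versus $(J_1,\dots,J_N)$ — and the right-hand side becomes $\mc{A}^T$. Hence $\mc{X}^T$ satisfies equation $(1)$ with respect to $\mc{A}^T$, so $\mc{X}^T$ is a generalized inverse of $\mc{A}^T$. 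The only mild subtlety is to check that transposition genuinely commutes with the Boolean-sum that defines the Einstein product over $\{0,1\}$; this holds because the Boolean operations are commutative, so reordering the summands in $(\ref{Eins})$ does not change the entry.

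For part $(b)$: given $\mc{A}\n\mc{X}_1\m\mc{A}=\mc{A}$ and $\mc{A}\n\mc{X}_2\m\mc{A}=\mc{A}$, I would expand $\mc{A}\n(\mc{X}_1+\mc{X}_2)\m\mc{A}$ using distributivity of the Einstein product over tensor addition (which follows entrywise from the distributive law of Boolean algebra) to get $\mc{A}\n\mc{X}_1\m\mc{A}+\mc{A}\n\mc{X}_2\m\mc{A}=\mc{A}+\mc{A}$, and then invoke the Boolean idempotency $\mc{A}+\mc{A}=\mc{A}$ noted at the start of Section 3.1 to conclude $\mc{A}\n(\mc{X}_1+\mc{X}_2)\m\mc{A}=\mc{A}$. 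Thus $\mc{X}_1+\mc{X}_2$ is again a generalized inverse of $\mc{A}$.

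I do not expect any real obstacle here; the proof is a short formal computation. The one point requiring a little care — and the thing I would be most careful to state cleanly — is the interaction of the transpose with the Einstein product in part $(a)$: writing down exactly which index blocks get swapped so that the claim $(\mc{A}\n\mc{X}\m\mc{A})^T=\mc{A}^T\n\mc{X}^T\m\mc{A}^T$ is literally correct with the notation of the paper, and noting that this reversal is valid over $\mathfrak{B}$ because Boolean addition and multiplication are commutative and associative.
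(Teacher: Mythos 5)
Your verification is correct and matches the paper's (implicit) argument: the paper simply states that the corollary follows from Definition \ref{defgi}, and the intended justification is exactly your direct check — transposing $\mc{A}\n\mc{X}\m\mc{A}=\mc{A}$ with the order-reversal of the Einstein product for part (a), and distributivity plus the Boolean idempotency $\mc{A}+\mc{A}=\mc{A}$ for part (b). Your attention to the $\n$ versus $\m$ index bookkeeping in part (a) is the only nontrivial point, and you handle it correctly.
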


Thus the existence of generalized inverse of a Boolean tensor guarantees the existence of a reflexive generalized inverse. In addition to that, the Remark \ref{rm11} and Corollary \ref{corm1} (b) ensures that the existence of one-generalized inverse implies the existence of finite number generalized inverses. In view of the fact, we define the maximum generalized inverse of a Boolean tensor, as follow:

\begin{definition}
 Let $\mc{A}\in\mathbb{R}^{I_1\times \cdots \times I_M\times J_1 \times \cdots \times J_N}.$ A tensor $\mc{X}$ is called maximum generalized inverse of $\mc{A}$ if $\mc{G}\leq \mc{X}$ for every generalized inverse $\mc{G}$ of $\mc{A}.$
\end{definition}

Note that, the generalized inverse of a Boolean tensor need not be unique which explained in the next example.

\begin{example}\label{example18}
Consider a Boolean tensor
$~\mc{A}=(a_{ijkl}) \in \mathbb{R}^{{2\times3}\times{2 \rtimes 3}}$ with entries
\begin{eqnarray*}
a_{ij11} =a_{ij12} =a_{ij13} =a_{ij21} =a_{ij22} =a_{ij23} =
    \begin{pmatrix}
    1 & 0 & 0 \\
    1 & 0 &  0
    \end{pmatrix}.
\end{eqnarray*}
Then it can be easily verified that both tensors
$~\mc{X}=(x_{ijkl}) \in \mathbb{R}^{{2\times3}\times{2 \rtimes 3}}$  and $~\mc{Y}=(y_{ijkl}) \in \mathbb{R}^{{2\times3}\times{2 \rtimes 3}}$  with entries
\begin{eqnarray*}
x_{ij11} =
    \begin{pmatrix}
    0 & 1 & 1 \\
    1 & 1 &  1
    \end{pmatrix},
x_{ij12} =x_{ij13} =x_{ij21} = x_{ij22} =x_{ij23} =
    \begin{pmatrix}
     0 & 0 & 0\\
     0 & 0 & 0
    \end{pmatrix},\mbox{ and }
   \end{eqnarray*}
\begin{eqnarray*}
y_{ij11} =
    \begin{pmatrix}
    1 & 0 & 0 \\
    0 & 0 &  1
    \end{pmatrix},
y_{ij12} =y_{ij13} =y_{ij21} =y_{ij22} =y_{ij23} =
    \begin{pmatrix}
     0 & 0 & 0\\
     0 & 0 & 0
    \end{pmatrix},
\end{eqnarray*}
are satisfies the required condition of the Definition \ref{defgi}.
\end{example}
 
 Foa a Boolean tensor $\mc{A}\in\mathbb{R}^{I_1 \times \cdots \times  I_N \times J_1 \times \cdots \times   J_N},$ the number of generalized inverses are finite and the maximum number of generalized inverses is $2^{I_1 \times \cdots \times  I_N\times I_1 \times \cdots \times I_N}.$
 The next result assures the uniqueness and is true only for invertiable  tensors.
 \begin{lemma}
 Let $\mc{A}\in\mathbb{R}^{I_1 \times \cdots \times  I_N\times I_1 \times \cdots \times  I_N}$ be any Boolean tensor. If $\mc{A}$ is invertiable then $\mc{A}^{-1}$ is the only generalized inverse of $\mc{A}.$
 \end{lemma}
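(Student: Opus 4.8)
The plan is to prove that an invertible Boolean tensor has exactly one generalized inverse, namely its two-sided inverse $\mc{A}^{-1}$. First I would observe that $\mc{A}^{-1}$ itself is a generalized inverse: since $\mc{A}\n\mc{A}^{-1}=\mc{A}^{-1}\n\mc{A}=\mc{I}$, we immediately get $\mc{A}\n\mc{A}^{-1}\n\mc{A}=\mc{I}\n\mc{A}=\mc{A}$, so condition $(1)$ of Definition \ref{defgi} holds. Hence the set of generalized inverses is nonempty and contains $\mc{A}^{-1}$.

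Next I would take an arbitrary generalized inverse $\mc{X}$ of $\mc{A}$, so that $\mc{A}\n\mc{X}\n\mc{A}=\mc{A}$, and show $\mc{X}=\mc{A}^{-1}$. The key step is to left- and right-multiply this identity by $\mc{A}^{-1}$: from $\mc{A}\n\mc{X}\n\mc{A}=\mc{A}$ we obtain
\begin{equation*}
\mc{A}^{-1}\n(\mc{A}\n\mc{X}\n\mc{A})\n\mc{A}^{-1}=\mc{A}^{-1}\n\mc{A}\n\mc{A}^{-1},
\end{equation*}
and using associativity of the Einstein product together with $\mc{A}^{-1}\n\mc{A}=\mc{A}\n\mc{A}^{-1}=\mc{I}$, the left side collapses to $\mc{I}\n\mc{X}\n\mc{I}=\mc{X}$ while the right side collapses to $\mc{I}\n\mc{A}^{-1}=\mc{A}^{-1}$. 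Therefore $\mc{X}=\mc{A}^{-1}$, proving uniqueness.

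The only point needing a little care — and the main (mild) obstacle — is to confirm that the Einstein product $\n$ of Boolean tensors is associative and that the identity tensor $\mc{I}$ acts as a genuine two-sided identity in the Boolean semiring setting, so that the cancellation manipulations above are legitimate; these follow from the definition of $\n$ in Eq.~(\ref{Eins}) with Boolean addition and multiplication, since the Boolean semiring is associative and distributive, exactly as in the real case used in \cite{BraliNT13, sun}. Once associativity and the identity property are in hand, the argument is a one-line squeeze and no further computation is required. I would also remark that this lemma is the Boolean-tensor analogue of the classical fact that an invertible matrix has a unique inner inverse, and that invertibility here should be understood in the Boolean sense (there exists $\mc{A}^{-1}$ with $\mc{A}\n\mc{A}^{-1}=\mc{A}^{-1}\n\mc{A}=\mc{I}$), which for Boolean tensors forces $\mc{A}$ to be a permutation tensor in the sense of Definition \ref{perm}.
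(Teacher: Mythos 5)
Your proof is correct: the paper states this lemma without proof, and your argument (check that $\mc{A}^{-1}$ satisfies condition $(1)$, then sandwich $\mc{A}\n\mc{X}\n\mc{A}=\mc{A}$ between $\mc{A}^{-1}$ on both sides to force $\mc{X}=\mc{A}^{-1}$) is exactly the standard reasoning the paper implicitly relies on, and it is valid over the Boolean semiring since only associativity and the identity property of $\mc{I}$ are used, with no subtraction needed.
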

 Next, we discus the equivalence condition for consistent system and generalized inverse. 
 
 \begin{theorem}\label{eqgen}
Let $\mc{A}\in\mathbb{R}^{I_1 \times \cdots \times  I_M\times J_1 \times \cdots \times J_N}$ and $\mc{X}\in\mathbb{R}^{J_1 \times \cdots \times J_N\times I_1 \times \cdots \times  I_M}.$  Then the followings are equivalent:
\begin{enumerate}
    \item[(a)] $\mc{A}\n\mc{X}\m\mc{A}=\mc{A}.$
    \item[(b)] $\mc{X}\m\mc{Y}$ is a solution of the tensor equation $\mc{A}\n\mc{Z}=\mc{Y}$ whenever $\mc{Y}\in\mathfrak{R}(\mc{A}).$
    \item[(c)] $\mc{A}\n\mc{X}$ is idempotent and $\mathfrak{R}(\mc{A})=\mathfrak{R}(\mc{A}\n\mc{X}).$
     \item[(d)] $\mc{X}\m\mc{A}$ is idempotent and $\mathfrak{R}(\mc{A^T})=\mathfrak{R}(\mc{A}^T\n\mc{X}^T).$
\end{enumerate}
\end{theorem}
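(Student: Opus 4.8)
The plan is to show that (b), (c), and (d) are each equivalent to the generalized-inverse identity (a), $\mc{A}\n\mc{X}\m\mc{A}=\mc{A}$. I would prove (a)$\Leftrightarrow$(b) and (a)$\Leftrightarrow$(c) by direct Einstein-product algebra together with Lemma \ref{range-stan}, and then deduce (a)$\Leftrightarrow$(d) from (a)$\Leftrightarrow$(c) by transposing. The only tools needed are associativity of the Einstein product (in the mixed forms $\n$ and $\m$ dictated by the formats of $\mc{A}$ and $\mc{X}$), the transpose-reversal rule $(\mc{P}*\mc{Q})^T=\mc{Q}^T*\mc{P}^T$, and the elementary principle that two Boolean tensors of the same format coincide iff they agree after being contracted (over the relevant index block) against every tensor whose only nonzero entry is a single $1$ at a prescribed multi-index.

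For (a)$\Rightarrow$(b): given $\mc{Y}\in\mathfrak{R}(\mc{A})$, write $\mc{Y}=\mc{A}\n\mc{W}$; then $\mc{A}\n(\mc{X}\m\mc{Y})=(\mc{A}\n\mc{X}\m\mc{A})\n\mc{W}=\mc{A}\n\mc{W}=\mc{Y}$, so $\mc{X}\m\mc{Y}$ solves $\mc{A}\n\mc{Z}=\mc{Y}$. For (b)$\Rightarrow$(a): take $\mc{Y}=\mc{A}\n\mc{E}$ with $\mc{E}\in\mathbb{R}^{J_1\times\cdots\times J_N}$ having a single $1$ at an arbitrary multi-index; then (b) forces $\mc{A}\n\mc{X}\m\mc{A}\n\mc{E}=\mc{A}\n\mc{E}$, i.e. $\mc{A}\n\mc{X}\m\mc{A}$ and $\mc{A}$ have equal ``columns'' at that multi-index, and letting $\mc{E}$ run over all such tensors yields (a).

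For (a)$\Rightarrow$(c): idempotency follows from $(\mc{A}\n\mc{X})\m(\mc{A}\n\mc{X})=(\mc{A}\n\mc{X}\m\mc{A})\n\mc{X}=\mc{A}\n\mc{X}$; the inclusion $\mathfrak{R}(\mc{A}\n\mc{X})\subseteq\mathfrak{R}(\mc{A})$ is automatic by Lemma \ref{range-stan}, and $\mc{A}=(\mc{A}\n\mc{X})\m\mc{A}$ gives $\mathfrak{R}(\mc{A})\subseteq\mathfrak{R}(\mc{A}\n\mc{X})$ by the same lemma, hence equality. For (c)$\Rightarrow$(a): from $\mathfrak{R}(\mc{A})\subseteq\mathfrak{R}(\mc{A}\n\mc{X})$ and Lemma \ref{range-stan} there is $\mc{U}$ with $\mc{A}=(\mc{A}\n\mc{X})\m\mc{U}$, whence, by idempotency, $\mc{A}\n\mc{X}\m\mc{A}=(\mc{A}\n\mc{X})\m(\mc{A}\n\mc{X})\m\mc{U}=(\mc{A}\n\mc{X})\m\mc{U}=\mc{A}$.

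For (a)$\Leftrightarrow$(d): transposing (a) and applying the reversal rule rewrites it as $\mc{A}^T\m\mc{X}^T\n\mc{A}^T=\mc{A}^T$, which is exactly statement (a) applied to the pair $(\mc{A}^T,\mc{X}^T)$; invoking the already-established equivalence (a)$\Leftrightarrow$(c) for that pair, together with $(\mc{X}\m\mc{A})^T=\mc{A}^T\m\mc{X}^T$ and the fact that a Boolean tensor is idempotent iff its transpose is, converts the two resulting conditions into precisely (d). The steps that demand care are (b)$\Rightarrow$(a), where a tensor identity has to be recovered from its columns, and the transpose argument for (d), where one must keep track of which contraction ($\n$ versus $\m$) and which reversal applies at each stage because $\mc{A}$ and $\mc{X}$ have different formats; the remaining manipulations are purely formal once Lemma \ref{range-stan} is in hand.
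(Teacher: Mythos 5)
Your proposal is correct and follows essentially the same route as the paper: the (a)$\Leftrightarrow$(b) argument via $\mc{Y}=\mc{A}\n\mc{W}$ and the (a)$\Leftrightarrow$(c) argument via idempotency plus Lemma \ref{range-stan} are exactly the paper's steps. You merely supply details the paper leaves implicit, namely the indicator-tensor justification of (b)$\Rightarrow$(a), the converse direction (c)$\Rightarrow$(a), and the explicit transposition argument for (d) (which the paper dismisses with ``using the same idea''), and your careful bookkeeping of $\n$ versus $\m$ there is sound.
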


\begin{proof}
First we will claim $(a)$ {\it if and only if} $(b).$ Let us assume $(a)$ holds and $\mc{Y}\in\mathfrak{R}(\mc{A}).$ Then there exists a Boolean tensor $\mc{Z}\in \mathbb{R}^{J_1\times J_2\times\cdots\times  J_N}$ such that $\mc{A}\n\mc{Z}=\mc{Y}.$ Now 
$$\mc{A}\n\mc{X}\m\mc{Y}=\mc{A}\n\mc{X}\m\mc{A}\n\mc{Z}=\mc{A}\n\mc{Z}=\mc{Y}.$$ Therefore, $\mc{X}\m\mc{Y}$ is a solution of $\mc{A}\n\mc{Z}=\mc{Y}.$ Conversely assume $(b)$ is true. That is $\mc{A}\n\mc{X}\m\mc{Y}=\mc{Y}$ for all $\mc{Y}\in\mathfrak{R}(\mc{A}).$ Since $\mc{Y}\in\mathfrak{R}(\mc{A})$ which implies there exists $\mc{U}\in \mathbb{R}^{J_1\times\cdots \times  J_N}$ such that $\mc{A}\n\mc{U}=\mc{Y}.$ Thus $\mc{A}\n\mc{X}\m\mc{A}\n\mc{U}=\mc{A}\n\mc{U}$ for all $\mc{U}\in \mathbb{R}^{J_1\times\cdots\times  J_N}.$ Therefore $\mc{A}\n\mc{X}\m\mc{A}=\mc{A}.$ Next we show the equivalence between $(a)$ and $(c).$ Clearly $(a)$ implies $\mc{A}\n\mc{X}$ idempotent. Since $\mc{A}=\mc{A}\n\mc{X}\m\mc{A}$ and $\mc{A}\n\mc{X}=\mc{A}\n {\mc{X}\m\mc{A}\n\mc{X}},$ so by Lemma \ref{range-stan} $\mathfrak{R}(\mc{A})=\mathfrak{R}(\mc{A}\n\mc{X}).$ Using the same idea, we can easily show the equivalence between $(a)$ and $(d).$ Hence completes the proof. 
\end{proof}

Since $\mc{A}^T\m\mc{A}\n\mc{X}_1\n\mc{A}^T\m\mc{A}=\mc{A}^T\m\mc{A}$, so by Theorem \ref{ltcan}, $\mc{A}\n\mc{X}_1\n\mc{A}^T\m\mc{A}=\mc{A}.$ Which leads the following corollary. 

\begin{corollary}
Let $\mathfrak{R}(\mc{A}^T)=\mathfrak{R}(\mc{A}^T\m\mc{A}).$ If $\mc{X}_1$ and $\mc{X}_2$ are generalized inverses of $\mc{A}^T\m\mc{A}$ and $\mc{A}\n\mc{A}^T$ respectively, then $\mc{X}_1\n\mc{A}^T$ and $\mc{A}^T\m\mc{X}_2$ are generalized inverse of $\mc{A}.$
\end{corollary}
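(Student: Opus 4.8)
The plan is to exploit the hypothesis $\mathfrak{R}(\mc{A}^T)=\mathfrak{R}(\mc{A}^T\m\mc{A})$ together with the cancellation result in Theorem~\ref{ltcan} (and its mirror, Corollary~\ref{rtcan}), exactly as in the displayed reasoning preceding the statement. First I would record the two symmetric consequences of the hypothesis: applying Lemma~\ref{range-stan} to $\mathfrak{R}(\mc{A}^T)=\mathfrak{R}(\mc{A}^T\m\mc{A})$ gives a tensor $\mc{U}$ with $\mc{A}^T=\mc{A}^T\m\mc{A}\n\mc{U}$, hence, transposing, $\mc{A}=\mc{U}^T\m\mc{A}^T\m\mc{A}$; and since trivially $\mathfrak{R}(\mc{A}^T\m\mc{A})\subseteq\mathfrak{R}(\mc{A}^T)$ always holds, the content is in the reverse inclusion supplied by the hypothesis. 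From $\mc{A}=\mc{U}^T\m\mc{A}^T\m\mc{A}$ one also reads off (taking transposes again, or directly) that $\mathfrak{R}(\mc{A})=\mathfrak{R}(\mc{A}\n\mc{A}^T)$, which is the range condition needed to invoke Corollary~\ref{rtcan}.

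Next, for the first assertion: let $\mc{X}_1$ be a generalized inverse of $\mc{A}^T\m\mc{A}$, i.e.\ $\mc{A}^T\m\mc{A}\n\mc{X}_1\n\mc{A}^T\m\mc{A}=\mc{A}^T\m\mc{A}$. This is precisely the identity $\mc{A}^T\m(\mc{A}\n\mc{X}_1\n\mc{A}^T\m\mc{A})=\mc{A}^T\m\mc{A}$, so by Theorem~\ref{ltcan} with the role of ``$\mc{B}$'' played by $\mc{A}$ and the range hypothesis $\mathfrak{R}(\mc{A}^T)=\mathfrak{R}(\mc{A}^T\m\mc{A})$ (writing $\mc{A}^T\m\mc{A}=\mc{A}^T\n\mc{A}$ since the relevant internal dimensions agree), we may cancel the leading $\mc{A}^T$ to get $\mc{A}\n\mc{X}_1\n\mc{A}^T\m\mc{A}=\mc{A}$. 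Reading the left side as $(\mc{A}\n(\mc{X}_1\n\mc{A}^T))\m\mc{A}=\mc{A}$ shows $\mc{X}_1\n\mc{A}^T$ satisfies equation~$(1)$ of Definition~\ref{defgi}, hence is a generalized inverse of $\mc{A}$.

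For the second assertion, let $\mc{X}_2$ be a generalized inverse of $\mc{A}\n\mc{A}^T$, i.e.\ $\mc{A}\n\mc{A}^T\m\mc{X}_2\m\mc{A}\n\mc{A}^T=\mc{A}\n\mc{A}^T$. Rewrite this as $(\mc{A}\n\mc{A}^T\m\mc{X}_2\m\mc{A})\n\mc{A}^T=\mc{A}\n\mc{A}^T$ and use Corollary~\ref{rtcan} with the range condition $\mathfrak{R}(\mc{A})=\mathfrak{R}(\mc{A}\n\mc{A}^T)$ established above, cancelling the trailing $\mc{A}^T$ to obtain $\mc{A}\n\mc{A}^T\m\mc{X}_2\m\mc{A}=\mc{A}$; that is, $\mc{A}\n((\mc{A}^T\m\mc{X}_2)\m\mc{A})=\mc{A}$, so $\mc{A}^T\m\mc{X}_2$ is a generalized inverse of $\mc{A}$. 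Throughout one should double-check that all Einstein-product contractions are over indices of matching dimension so that the products $\mc{A}^T\m\mc{A}$, $\mc{A}\n\mc{A}^T$, $\mc{X}_1\n\mc{A}^T$, $\mc{A}^T\m\mc{X}_2$ are well-defined; this bookkeeping (and verifying that the hypotheses of Theorem~\ref{ltcan} and Corollary~\ref{rtcan} are met in the Boolean setting, where the cancellation ultimately rests on Lemma~\ref{range-stan}) is the only real point of care, and I expect the index-tracking to be the main —though entirely routine— obstacle.
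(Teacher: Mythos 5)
Your handling of the first assertion is exactly the paper's own argument: from $\mc{A}^T\m\mc{A}\n\mc{X}_1\n\mc{A}^T\m\mc{A}=\mc{A}^T\m\mc{A}$, the hypothesis $\mathfrak{R}(\mc{A}^T)=\mathfrak{R}(\mc{A}^T\m\mc{A})$ and Theorem~\ref{ltcan} cancel the leading $\mc{A}^T$ and give $\mc{A}\n(\mc{X}_1\n\mc{A}^T)\m\mc{A}=\mc{A}$, so that half is fine and matches the text preceding the corollary.

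The gap is in the second assertion, at the step ``from $\mc{A}=\mc{U}^T\n\mc{A}^T\m\mc{A}$ one also reads off that $\mathfrak{R}(\mc{A})=\mathfrak{R}(\mc{A}\n\mc{A}^T)$.'' That implication is false for Boolean tensors. The factorization $\mc{A}=\mc{U}^T\n\mc{A}^T\m\mc{A}$ expresses $\mc{A}$ as a left multiple of $\mc{A}^T\m\mc{A}$, which is a statement about $\mathfrak{R}(\mc{A}^T)$; to get $\mathfrak{R}(\mc{A})\subseteq\mathfrak{R}(\mc{A}\n\mc{A}^T)$ you would need a factorization $\mc{A}=\mc{A}\n\mc{A}^T\m\mc{V}$, and unlike the real case the two range conditions are independent in the Boolean setting. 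A counterexample already at order two ($M=N=1$, Boolean matrices): let
\begin{equation*}
A=\begin{pmatrix}1&1&0\\0&1&1\end{pmatrix},\qquad
A^T\1 A=\begin{pmatrix}1&1&0\\1&1&1\\0&1&1\end{pmatrix},\qquad
A\1 A^T=\begin{pmatrix}1&1\\1&1\end{pmatrix}.
\end{equation*}
The two columns of $A^T$ are the first and third columns of $A^T\1 A$, so $\mathfrak{R}(A^T)=\mathfrak{R}(A^T\1 A)$ holds; but $\mathfrak{R}(A\1 A^T)=\{(0,0)^T,(1,1)^T\}$ does not contain the first column $(1,0)^T$ of $A$, so $\mathfrak{R}(A)\neq\mathfrak{R}(A\1 A^T)$. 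Worse, the conclusion itself fails here: $X_2=\begin{pmatrix}1&0\\0&0\end{pmatrix}$ is a $\{1\}$-inverse of $A\1 A^T$, yet $A\1(A^T\1 X_2)\1 A=\begin{pmatrix}1&1&0\\1&1&0\end{pmatrix}\neq A$. So no argument can close this gap from the stated hypothesis alone: the $\mc{A}^T\m\mc{X}_2$ half needs the dual condition $\mathfrak{R}(\mc{A})=\mathfrak{R}(\mc{A}\n\mc{A}^T)$, under which your Corollary~\ref{rtcan} cancellation goes through verbatim. (Note the paper's displayed justification before the corollary covers only the $\mc{X}_1\n\mc{A}^T$ half; the second half is left ``by symmetry,'' and that symmetric argument tacitly uses the dual range condition.)
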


Further, from the range conditions, if $\mathfrak{R}(\mc{A}^T)\subseteq \mathfrak{R}(\mc{B}^T)$ and $\mathfrak{R}(\mc{C})\subseteq \mathfrak{R}(\mc{B}).$ Then  $\mc{A}=\mc{V}\m\mc{B}$ and $\mc{C}=\mc{B}\n\mc{U}$ for some tensors $\mc{U}$ and $\mc{V}.$ Now 
$\mc{A}\n\mc{X}\m\mc{C}= \mc{V}\m\mc{B}\n\mc{X}\m\mc{B}\n\mc{U}=\mc{V}\m\mc{B}\n\mc{U}$ which does not rely on $\mc{X}$. So it is invariant to the choice of $\mc{X}.$ So, we conclude this observation in the following corollary.

\begin{corollary}
Let $\mc{A},$ $\mc{B}$ and $\mc{C}$ be suitable tensors such that $\mathfrak{R}(\mc{A}^T)\subseteq \mathfrak{R}(\mc{B}^T)$ and $\mathfrak{R}(\mc{C})\subseteq \mathfrak{R}(\mc{B}).$ If the generalized inverse of $\mc{B}$ exists, then $\mc{A}\n\mc{X}\m\mc{C}$ is invariant to $\mc{X},$ where $\mc{X}$ is the generalized inverse  of $\mc{B}.$
\end{corollary}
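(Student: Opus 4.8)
The plan is to use the range-inclusion characterization from Lemma \ref{range-stan} to rewrite $\mc{A}$ and $\mc{C}$ in terms of $\mc{B}$, and then absorb the generalized inverse condition on $\mc{B}$. First I would note that $\mathfrak{R}(\mc{A}^T)\subseteq\mathfrak{R}(\mc{B}^T)$ means, by Lemma \ref{range-stan} applied to the transposes, that there is a tensor $\mc{V}$ with $\mc{A}^T=\mc{B}^T\m\mc{V}^T$, i.e.\ $\mc{A}=\mc{V}\m\mc{B}$; similarly $\mathfrak{R}(\mc{C})\subseteq\mathfrak{R}(\mc{B})$ gives a tensor $\mc{U}$ with $\mc{C}=\mc{B}\n\mc{U}$. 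These are exactly the identities already recorded in the paragraph preceding the corollary, so this step is essentially a citation of Lemma \ref{range-stan}.

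Next I would substitute. For any generalized inverse $\mc{X}$ of $\mc{B}$ (which exists by hypothesis), we have
\begin{equation*}
\mc{A}\n\mc{X}\m\mc{C}=\mc{V}\m\mc{B}\n\mc{X}\m\mc{B}\n\mc{U}=\mc{V}\m(\mc{B}\n\mc{X}\m\mc{B})\n\mc{U}=\mc{V}\m\mc{B}\n\mc{U}.
\end{equation*}
The right-hand side depends only on $\mc{V}$, $\mc{B}$ and $\mc{U}$, none of which involve $\mc{X}$. Hence if $\mc{X}_1$ and $\mc{X}_2$ are any two generalized inverses of $\mc{B}$, then $\mc{A}\n\mc{X}_1\m\mc{C}=\mc{V}\m\mc{B}\n\mc{U}=\mc{A}\n\mc{X}_2\m\mc{C}$, which is the claimed invariance. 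One should be a little careful that the associativity of the Einstein product (which lets us regroup $\mc{V}\m\mc{B}\n\mc{X}\m\mc{B}\n\mc{U}$ as $\mc{V}\m(\mc{B}\n\mc{X}\m\mc{B})\n\mc{U}$) holds in the Boolean setting as well; this follows because Boolean addition and multiplication form a commutative semiring, so the usual index-sum manipulations go through unchanged.

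I do not expect a genuine obstacle here — the result is a direct corollary of the two representation identities plus the defining equation $\mc{B}\n\mc{X}\m\mc{B}=\mc{B}$. The only point that deserves a sentence of care is matching the index types so that all the Einstein products $\mc{V}\m\mc{B}$, $\mc{B}\n\mc{X}$, $\mc{X}\m\mc{B}$, $\mc{B}\n\mc{U}$ are well-defined; this is guaranteed once we fix the orders of $\mc{U}$ and $\mc{V}$ as produced by Lemma \ref{range-stan}, so I would simply state their dimensions explicitly and then carry out the one-line computation above.
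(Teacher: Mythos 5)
Your argument is correct and is essentially identical to the paper's own justification, which also invokes Lemma \ref{range-stan} to write $\mc{A}=\mc{V}\m\mc{B}$ and $\mc{C}=\mc{B}\n\mc{U}$ and then collapses $\mc{V}\m\mc{B}\n\mc{X}\m\mc{B}\n\mc{U}$ to $\mc{V}\m\mc{B}\n\mc{U}$ via $\mc{B}\n\mc{X}\m\mc{B}=\mc{B}$. Your added remarks on associativity over the Boolean semiring and on matching index types are fine but not needed beyond what the paper already assumes.
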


To prove the next result, we define regular and singular of tensors, i.e., 
A tensor $\mc{A}\in\mathbb{R}^{I_1\times \cdots \times I_M\times J_1\times \cdots J_N},$ is called regular if the tensor equation $\mc{A}\n\mc{X}\m\mc{A}=\mc{A}$ has a solution, otherwise called \textit{singular}.

\begin{theorem}\label{thm3.43}
Let $\mc{A}\in\mathbb{R}^{I_1 \times \cdots \times  I_M\times J_1 \times \cdots \times J_N},$  $\mc{S}\in\mathbb{R}^{I_1 \times \cdots \times  I_M\times I_1 \times \cdots \times I_M},$ and $\mc{T}\in\mathbb{R}^{J_1 \times \cdots \times  J_M\times J_1 \times \cdots \times  J_N}.$ If $\mc{S}$ and $\mc{T}$ are invertible, then the following are equivalent:
\begin{enumerate}
    \item[(a)] $\mc{A}$ is regular.
     \item[(b)] $\mc{S}\m\mc{A}\n\mc{T}$ is regular.
     \item[(c)] $\mc{A}^T$ is regular.
       \item[(d)] $\mc{T}\n\mc{A}^T\m\mc{S}$ is regular.
\end{enumerate}
\end{theorem}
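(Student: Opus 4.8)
The plan is to establish the chain of equivalences by exploiting the invertibility of $\mc{S}$ and $\mc{T}$, which makes "multiplication by $\mc{S}$ on the left" and "multiplication by $\mc{T}$ on the right" reversible operations that carry solutions of one regularity equation to solutions of another. First I would prove $(a)\Leftrightarrow(b)$: if $\mc{A}$ is regular with $\mc{A}\n\mc{X}\m\mc{A}=\mc{A}$, I claim $\mc{Y}:=\mc{T}^{-1}\n\mc{X}\m\mc{S}^{-1}$ (with Einstein products of the appropriate orders) satisfies $(\mc{S}\m\mc{A}\n\mc{T})\n\mc{Y}\m(\mc{S}\m\mc{A}\n\mc{T})=\mc{S}\m\mc{A}\n\mc{T}$; this is a direct substitution using associativity of the Einstein product and the cancellation $\mc{T}\n\mc{T}^{-1}=\mc{I}$, $\mc{S}^{-1}\m\mc{S}=\mc{I}$. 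Conversely, if $\mc{S}\m\mc{A}\n\mc{T}$ is regular, applying the same construction with $\mc{S}^{-1}$ and $\mc{T}^{-1}$ in place of $\mc{S}$ and $\mc{T}$ recovers regularity of $\mc{A}$.

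Next I would handle $(a)\Leftrightarrow(c)$: this follows immediately by transposing the defining equation. If $\mc{A}\n\mc{X}\m\mc{A}=\mc{A}$, then taking transposes and using the order-reversing property of the transpose on Einstein products gives $\mc{A}^T\m\mc{X}^T\n\mc{A}^T=\mc{A}^T$, so $\mc{X}^T$ is a generalized inverse of $\mc{A}^T$ (this is essentially Corollary \ref{corm1}(a)); the converse is identical. Finally, $(d)$ is obtained by combining $(b)$ and $(c)$: the tensor $\mc{T}\n\mc{A}^T\m\mc{S}$ is exactly $(\mc{S}\m\mc{A}\n\mc{T})^T$, so $(b)\Leftrightarrow(d)$ is again just a transpose argument, and the whole cycle closes. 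Alternatively one can argue $(c)\Leftrightarrow(d)$ directly by the same $\mc{S}$-$\mc{T}$ substitution trick applied to $\mc{A}^T$ instead of $\mc{A}$.

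The only subtlety — and the step I would be most careful about — is bookkeeping of the tensor orders and dimensions so that all Einstein products $\m$ and $\n$ are legal and the inverses $\mc{S}^{-1}, \mc{T}^{-1}$ live in the right spaces. Since $\mc{S}\in\mathbb{R}^{I_1\times\cdots\times I_M\times I_1\times\cdots\times I_M}$ acts on the "$I$-side" of $\mc{A}$ via $\m$ and $\mc{T}\in\mathbb{R}^{J_1\times\cdots\times J_N\times J_1\times\cdots\times J_N}$ acts on the "$J$-side" via $\n$, one checks that $\mc{S}\m\mc{A}\n\mc{T}\in\mathbb{R}^{I_1\times\cdots\times I_M\times J_1\times\cdots\times J_N}$ has the same shape as $\mc{A}$, and the candidate inverse $\mc{T}^{-1}\n\mc{X}\m\mc{S}^{-1}$ has the same shape as $\mc{X}$; the invertibility hypotheses guarantee $\mc{S}\m\mc{S}^{-1}=\mc{S}^{-1}\m\mc{S}=\mc{I}$ and similarly for $\mc{T}$. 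Beyond this, everything is a routine substitution, so no genuine obstacle arises; the Boolean structure plays no special role here since the argument uses only associativity, the identity tensor, and the transpose.
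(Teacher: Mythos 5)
The paper states Theorem \ref{thm3.43} without giving any proof (the text moves straight on to Lemma \ref{block}), so there is nothing to compare line by line; your substitution-plus-transpose argument is exactly the kind of routine verification the authors evidently had in mind, and the core of it is sound: $(a)\Leftrightarrow(b)$ via $\mc{Y}=\mc{T}^{-1}\n\mc{X}\m\mc{S}^{-1}$ and the reverse substitution, and $(a)\Leftrightarrow(c)$ by transposing (Corollary \ref{corm1}(a)) both go through verbatim in the Boolean semiring, since only associativity, the identity tensor, and the order-reversing property of the transpose are used.

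The one inaccurate step is your claim that $\mc{T}\n\mc{A}^T\m\mc{S}$ \emph{is} $(\mc{S}\m\mc{A}\n\mc{T})^T$: in fact $(\mc{S}\m\mc{A}\n\mc{T})^T=\mc{T}^T\n\mc{A}^T\m\mc{S}^T$, which coincides with $\mc{T}\n\mc{A}^T\m\mc{S}$ only when $\mc{S}$ and $\mc{T}$ are symmetric, and invertible Boolean tensors (permutation tensors, Preposition \ref{permu}) need not be symmetric. This is harmless because your fallback remark already closes the cycle: apply the $\mc{S}$--$\mc{T}$ substitution trick directly to $\mc{A}^T$ with the invertible tensors $\mc{T}$ (acting on the left) and $\mc{S}$ (acting on the right) to get $(c)\Leftrightarrow(d)$. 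Alternatively, since every invertible Boolean tensor is a permutation tensor and hence orthogonal, $\mc{S}^{-1}=\mc{S}^T$ and $\mc{T}^{-1}=\mc{T}^T$, so $(\mc{S}\m\mc{A}\n\mc{T})^T=\mc{T}^{-1}\n\mc{A}^T\m\mc{S}^{-1}$ and one more substitution step relates it to $\mc{T}\n\mc{A}^T\m\mc{S}$. Either repair makes the proof complete; just drop or correct the literal identification of $\mc{T}\n\mc{A}^T\m\mc{S}$ with the transpose.
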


Based on the block tensor\cite{sun} and their properties, we have the following lemma.
\begin{lemma}\label{block}
Let $\mc{A}\in\mathbb{R}^{I_1 \times \cdots \times I_M\times J_1 \times \cdots \times  J_N}.$ Then $\mc{A}$ is regular if and only if $\begin{bmatrix}
\mc{A} &\mc{O}\\
\mc{O} & \mc{B}
\end{bmatrix}$ is regular for all regular tensors $\mc{B}\in\mathbb{R}^{I_1 \times \cdots \times I_M\times J_1 \times \cdots \times  J_N}.$
\end{lemma}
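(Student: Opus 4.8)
The plan is to prove both implications by directly exhibiting the generalized inverse of the block tensor in one direction, and by restricting to a subblock in the other. Throughout, let $\mc{M}=\begin{bmatrix}\mc{A} & \mc{O}\\ \mc{O} & \mc{B}\end{bmatrix}$ denote the block tensor built from the row/column block tensor construction recalled in Section 2. The key observation I would use repeatedly is that the Einstein product of block tensors behaves like ordinary block-matrix multiplication: if $\mc{X}=\begin{bmatrix}\mc{X}_{11} & \mc{X}_{12}\\ \mc{X}_{21} & \mc{X}_{22}\end{bmatrix}$ is a conformally partitioned tensor, then $\mc{M}\n\mc{X}\m\mc{M}$ has block $(1,1)$ entry $\mc{A}\n\mc{X}_{11}\m\mc{A}$, block $(2,2)$ entry $\mc{B}\n\mc{X}_{22}\m\mc{B}$, and the off-diagonal blocks $\mc{A}\n\mc{X}_{12}\m\mc{B}$ and $\mc{B}\n\mc{X}_{21}\m\mc{A}$. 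This blockwise identity should be verified once from the definition of the Einstein product together with the definition of the block tensor (the zero off-diagonal blocks of $\mc{M}$ kill the cross terms), and then everything else is bookkeeping.

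For the direction ``$\mc{A}$ regular $\Rightarrow$ $\mc{M}$ regular'': assume $\mc{A}\n\mc{A}^{(1)}\m\mc{A}=\mc{A}$ and, since $\mc{B}$ is assumed regular, $\mc{B}\n\mc{B}^{(1)}\m\mc{B}=\mc{B}$. Set $\mc{X}=\begin{bmatrix}\mc{A}^{(1)} & \mc{O}\\ \mc{O} & \mc{B}^{(1)}\end{bmatrix}$. Then by the blockwise identity above, $\mc{M}\n\mc{X}\m\mc{M}$ has diagonal blocks $\mc{A}$ and $\mc{B}$ and off-diagonal blocks equal to products containing $\mc{O}$, hence zero. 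Thus $\mc{M}\n\mc{X}\m\mc{M}=\mc{M}$, so $\mc{M}$ is regular.

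For the converse ``$\mc{M}$ regular $\Rightarrow$ $\mc{A}$ regular'': suppose $\mc{X}$ is a (Boolean) generalized inverse of $\mc{M}$, and partition it conformally as $\mc{X}=\begin{bmatrix}\mc{X}_{11} & \mc{X}_{12}\\ \mc{X}_{21} & \mc{X}_{22}\end{bmatrix}$. Reading off the $(1,1)$ block of the equation $\mc{M}\n\mc{X}\m\mc{M}=\mc{M}$ gives exactly $\mc{A}\n\mc{X}_{11}\m\mc{A}=\mc{A}$, so $\mc{X}_{11}$ is a generalized inverse of $\mc{A}$, and $\mc{A}$ is regular. (Note the hypothesis ``for all regular $\mc{B}$'' is only needed in the forward direction to furnish some $\mc{B}^{(1)}$; for the converse the existence of one regular $\mc{B}$ suffices, e.g. $\mc{B}=\mc{A}$, consistent with the quantifier in the statement.)

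The main obstacle I anticipate is the first step: carefully checking that the Einstein product of conformally partitioned block tensors really does decompose blockwise, using the index-range bookkeeping from the definitions of $[\mc{A}~\mc{B}]$ and the column block tensor — in particular confirming that the summation index in \eqref{Eins} splits over the two index ranges $[J_1]\times\cdots\times[J_N]$ and $\Gamma_1\times\cdots\times\Gamma_N$ and that the zero off-diagonal blocks of $\mc{M}$ annihilate all cross contributions. Everything downstream is then immediate, and nothing here is special to the Boolean semiring beyond the fact that $0$ is absorbing for multiplication and is the additive identity, both of which hold in $\mathfrak{B}$.
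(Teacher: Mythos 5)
Your proposal is correct and follows essentially the same route as the paper: the forward direction is the paper's own construction of the block-diagonal inverse $\begin{bmatrix}\mc{A}^{(1)} & \mc{O}\\ \mc{O} & \mc{B}^{(1)}\end{bmatrix}$, and your converse (extracting the $(1,1)$ block of $\mc{M}\n\mc{X}\m\mc{M}=\mc{M}$ for a conformally partitioned $\mc{X}$) is the natural way to fill in the paper's ``similar way'' remark. The only slip is your parenthetical suggestion to instantiate the hypothesis at $\mc{B}=\mc{A}$, which is circular since the regularity of $\mc{A}$ is exactly what is being proved; instantiate instead at any tensor already known to be regular of that size, e.g.\ $\mc{B}=\mc{O}$.
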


\begin{proof}
Let $\mc{A}$ and $\mc{B}$ be regular tensors. Then there exist tensors $\mc{X}$ and $\mc{Y}$ such that $\mc{A}\n\mc{X}\m\mc{A}=\mc{A}$ and $\mc{B}\n\mc{Y}\m\mc{B}=\mc{B}.$ Let $\mc{Z}=\begin{bmatrix}
\mc{X} & \mc{O}\\
\mc{O} & \mc{Y}\\
\end{bmatrix}.$ Now 
\begin{eqnarray*}
\begin{bmatrix}
\mc{A} & \mc{O}\\
\mc{O} & \mc{B}\\
\end{bmatrix}\n\mc{Z}\m\begin{bmatrix}
\mc{A} & \mc{O}\\
\mc{O} & \mc{B}\\
\end{bmatrix}&=&\begin{bmatrix}
\mc{A} & \mc{O}\\
\mc{O} & \mc{B}\\
\end{bmatrix}\n\begin{bmatrix}
\mc{X} & \mc{O}\\
\mc{O} & \mc{Y}\\
\end{bmatrix}\m\begin{bmatrix}
\mc{A} & \mc{O}\\
\mc{O} & \mc{B}\\
\end{bmatrix}\\
&=&
\begin{bmatrix}
\mc{A}\n\mc{X} & \mc{O}\\
\mc{O} & \mc{B}\n\mc{Y}\\
\end{bmatrix}\m\begin{bmatrix}
\mc{A} & \mc{O}\\
\mc{O} & \mc{B}\\
\end{bmatrix}\\
&=&\begin{bmatrix}
\mc{A}\n\mc{X}\m\mc{A} & \mc{O}\\
\mc{O} & \mc{B}\n\mc{Y}\m\mc{B}\\
\end{bmatrix}=\begin{bmatrix}
\mc{A} & \mc{O}\\
\mc{O} & \mc{B}\\
\end{bmatrix}.
\end{eqnarray*}
Thus $\begin{bmatrix}
\mc{A} & \mc{O}\\
\mc{O} & \mc{B}\\
\end{bmatrix}$ is regular. The converse part can be proved in the similar way. 
\end{proof}

   We now present another characterization of the generalized inverse of the Boolean tensor, as follows.
\begin{theorem}\label{lemcomp}
Let $\mc{A}\in\mathbb{R}^{I_1 \times \cdots \times  I_M\times J_1 \times \cdots \times  J_N}.$ Then 
$$\mc{A}\n\mc{X}\m\mc{A}\leq \mc{A}~~ if ~and~ only~ if ~~\mc{X}\leq \left(\mc{A}\n\mc{A}^{CT}\m\mc{A}\right)^{CT}.$$
\end{theorem}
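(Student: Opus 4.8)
The plan is to reduce this statement to a pointwise (entry-wise) condition and then invoke the machinery already built in Lemma \ref{complem} and Proposition \ref{equitc}. First I would write $\mc{A}\n\mc{X}\m\mc{A}\leq\mc{A}$ in coordinates: for all indices $i_1\cdots i_M$, $j_1\cdots j_N$,
\[
\sum_{k_1\cdots k_M}\sum_{l_1\cdots l_N}a_{i_1\cdots i_M l_1\cdots l_N}\,x_{l_1\cdots l_N k_1\cdots k_M}\,a_{k_1\cdots k_M j_1\cdots j_N}\le a_{i_1\cdots i_M j_1\cdots j_N}.
\]
As in the proof of Theorem \ref{lucethm}, because every quantity is Boolean, this single inequality is equivalent to the statement that each individual product term vanishes whenever $a_{i_1\cdots i_M j_1\cdots j_N}=0$, i.e.
\[
a_{i_1\cdots i_M l_1\cdots l_N}\,x_{l_1\cdots l_N k_1\cdots k_M}\,a_{k_1\cdots k_M j_1\cdots j_N}\,a^c_{i_1\cdots i_M j_1\cdots j_N}=0
\]
for all free indices. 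Summing this identity over $i_1\cdots i_M$ and $j_1\cdots j_N$ (which does not change its truth value, again by Booleanity) rewrites it as $\mc{A}^T\n\mc{A}\n\mc{X}\m\mc{A}\n(\mc{A}^T)^C\le\mc{I}^C$ after inserting the transpose bookkeeping that turns $a^c_{i_1\cdots i_M j_1\cdots j_N}$ into $\{a^{tc}\}_{j_1\cdots j_N i_1\cdots i_M}$, exactly the manoeuvre used in Theorem \ref{lucethm}.

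Next I would apply Proposition \ref{equitc}(b) to swap transpose and complement, so $(\mc{A}^T)^C=(\mc{A}^C)^T=\mc{A}^{CT}$, and then bring the expression to the form required by Lemma \ref{complem}. Concretely I want the inequality to read $\mc{P}\n\mc{Q}\kl\mc{R}\le\mc{I}^C$ with $\mc{P}=\mc{X}^{?}$ or rather with $\mc{X}$ playing the role of the ``outer'' factor so that Lemma \ref{complem} produces a bound $\mc{X}^C\ge(\cdots)^T$, equivalently $\mc{X}\le(\cdots)^{CT}$. Reading Lemma \ref{complem} with $\mc{A}\leftrightarrow\mc{X}$, $\mc{B}\leftrightarrow\mc{A}$, $\mc{C}\leftrightarrow(\mc{A}^T\n\mc{A}^{CT}\m\mc{A}\text{-type block})$ — more precisely grouping the chain $\mc{X}\m\mc{A}\n\mc{A}^{CT}\m\mc{A}$ so that $\mc{X}$ is the head — gives that $\mc{X}\m(\mc{A}\n\mc{A}^{CT}\m\mc{A})\le\mc{I}^C$ holds if and only if $\mc{X}^C\ge(\mc{A}\n\mc{A}^{CT}\m\mc{A})^T$, i.e. if and only if $\mc{X}\le\left(\mc{A}\n\mc{A}^{CT}\m\mc{A}\right)^{CT}$. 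So the real content is to verify that the entrywise condition extracted in the first paragraph is genuinely the same as $\mc{X}\m(\mc{A}\n\mc{A}^{CT}\m\mc{A})\le\mc{I}^C$ — i.e. that contracting $\mc{A}^T\n\mc{A}$ on the left and $\mc{A}\n\mc{A}^{CT}$ (not just $\mc{A}^{CT}$) on the right faithfully encodes ``for all $i,j$: the triple product times $a^c_{ij}$ is zero''.

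The step I expect to be the main obstacle is precisely that bookkeeping: keeping track of which indices are summed and in what order, and checking that the Boolean ``a sum of Boolean terms is $\le b$'' $\iff$ ``each term times $b^c$ is $0$'' reduction can be applied in the right places — in particular, making sure the left multiplication by $\mc{A}^T$ and the appearance of $\mc{A}^{CT}$ rather than a bare complement are both forced by, and not merely compatible with, the pointwise condition. Once the chain $\mc{A}\n\mc{X}\m\mc{A}\le\mc{A}\iff \mc{X}\m\mc{A}\n\mc{A}^{CT}\m\mc{A}\le\mc{I}^C$ is established, the rest is a direct citation: Proposition \ref{equitc}(a),(b) to normalise complements and transposes, and Lemma \ref{complem} (with the factorisation $\mc{B}\kl\mc{C}$ taken to be $(\mc{A}\n\mc{A}^{CT})\m\mc{A}$, or whichever grouping matches its hypotheses) to land on $\mc{X}\le\left(\mc{A}\n\mc{A}^{CT}\m\mc{A}\right)^{CT}$. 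The converse direction is automatic since every implication above is an ``if and only if''.
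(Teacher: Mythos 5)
Your plan does prove the theorem once the bookkeeping you postpone is carried out, but it is essentially the paper's own argument with Theorem \ref{lucethm} unpacked rather than a different route. The paper simply cites Theorem \ref{lucethm} twice: first with $\mc{X}\m\mc{A}$ playing the role of the unknown, so that $\mc{A}\n(\mc{X}\m\mc{A})\leq\mc{A}$ holds if and only if $\mc{X}\m\mc{A}\leq\left(\mc{A}^T\m\mc{A}^C\right)^C$, and then with $\mc{B}=\left(\mc{A}^T\m\mc{A}^C\right)^C$, so that $\mc{X}\leq\left(\mc{B}^C\n\mc{A}^T\right)^C=\left(\mc{A}^T\m\mc{A}^C\n\mc{A}^T\right)^C=\left(\mc{A}\n\mc{A}^{CT}\m\mc{A}\right)^{CT}$, using Proposition \ref{equitc} to cancel the double complement and commute transpose with complement. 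Since Theorem \ref{lucethm} already packages exactly the ``Boolean sum $\leq$'' to ``$\leq\mc{I}^C$'' reduction and the single use of Lemma \ref{complem} that you propose to redo by hand, quoting it twice is shorter and avoids all fresh index work; your organization (one direct entrywise reduction plus one appeal to Lemma \ref{complem}) is legitimate but gains nothing.

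Two concrete corrections. First, the intermediate inequality you display, $\mc{A}^T\n\mc{A}\n\mc{X}\m\mc{A}\n(\mc{A}^T)^C\leq\mc{I}^C$, is wrong: the left factor $\mc{A}^T\n\mc{A}$ is spurious, and the resulting product has format $J_1\times\cdots\times J_N\times I_1\times\cdots\times I_M$, so it cannot even be compared with an identity tensor. Second, the step you single out as the ``main obstacle'' is actually a two-line check, and it is true. Writing $i,q$ for multi-indices over $I_1\times\cdots\times I_M$ and $p,j$ for multi-indices over $J_1\times\cdots\times J_N$, the inequality $\mc{A}\n\mc{X}\m\mc{A}\leq\mc{A}$ says that every Boolean term $a_{ip}\,x_{pq}\,a_{qj}\,a^c_{ij}$ vanishes, while the $(p,p)$ diagonal entry of $\mc{X}\m\mc{A}\n\mc{A}^{CT}\m\mc{A}$ equals the Boolean sum over $q,j,i$ of $x_{pq}\,a_{qj}\,a^c_{ij}\,a_{ip}$, which consists of exactly the same terms; since a Boolean sum is zero precisely when every summand is zero, $\mc{A}\n\mc{X}\m\mc{A}\leq\mc{A}$ is equivalent to $\mc{X}\m\left(\mc{A}\n\mc{A}^{CT}\m\mc{A}\right)\leq\mc{I}^C$ with no left factor of $\mc{A}^T\n\mc{A}$ needed. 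From there Lemma \ref{complem}, applied to the triple $\left(\mc{X},\,\mc{A}\n\mc{A}^{CT},\,\mc{A}\right)$, gives $\mc{X}^C\geq\left(\mc{A}\n\mc{A}^{CT}\m\mc{A}\right)^T$, i.e.\ $\mc{X}\leq\left(\mc{A}\n\mc{A}^{CT}\m\mc{A}\right)^{CT}$ by Proposition \ref{equitc}(b), and all steps are equivalences, so your argument closes; but you should either supply this verification explicitly or, more economically, just invoke Theorem \ref{lucethm} twice as the paper does.
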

\begin{proof}
Applying Theorem \ref{lucethm} repetitively, we get 
$\mc{A}\n\mc{X}\m\mc{A}\leq \mc{A}$ if and only if $\mc{X}\m\mc{A}\leq \left(\mc{A}^T\m\mc{A}^C\right)^C$, which equivalently if and only if 
\begin{equation*}
\mc{X}\leq \left(\left(\left(\mc{A}^T\m\mc{A}^C\right)^C\right)^C\n\mc{A}^T\right)^C = \left(\mc{A}^T\m\mc{A}^C\n\mc{A}^T\right)^C =\left(\mc{A}\n\mc{A}^{CT}\m\mc{A}\right)^{CT}.
\end{equation*}
\end{proof}
Using the Theorem \ref{lemcomp},  and the fact of transpose and component-wise complement of a Boolean tensor, we obtain an important result for finding the maximum generalized inverse of a Boolean tensor. 
\begin{corollary}
Let  $\mc{A}\in\mathbb{R}^{I_1 \times \cdots \times  I_M\times J_1 \times \cdots \times  J_N}$ be  regular. Then the following are holds
\begin{enumerate}
\item[(a)] $\mc{A}=\mc{A}\n\left(\mc{A}\n\mc{A}^{CT}\m\mc{A}\right)^{CT}\m\mc{A};$
    \item[(b)] $\left(\mc{A}\n\mc{A}^{CT}\m\mc{A}\right)^{CT}$ is the maximum generalized inverse of $\mc{A};$ 
    \item[(c)] $\left(\mc{A}\n\mc{A}^{CT}\m\mc{A}\right)^{CT}\m\mc{A}\n\left(\mc{A}\n\mc{A}^{CT}\m\mc{A}\right)^{CT}$ is the maximum reflexive generalized inverse of $\mc{A}.$
\end{enumerate} 
\end{corollary}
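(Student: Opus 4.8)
The plan is to derive all three parts from Theorem \ref{lemcomp}, which already establishes that $\mc{X}\leq\bigl(\mc{A}\n\mc{A}^{CT}\m\mc{A}\bigr)^{CT}$ is equivalent to $\mc{A}\n\mc{X}\m\mc{A}\leq\mc{A}$. For brevity write $\mc{G}=\bigl(\mc{A}\n\mc{A}^{CT}\m\mc{A}\bigr)^{CT}$.

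For part (a): since $\mc{A}$ is regular, there is a tensor $\mc{X}_0$ with $\mc{A}\n\mc{X}_0\m\mc{A}=\mc{A}$; in particular $\mc{A}\n\mc{X}_0\m\mc{A}\leq\mc{A}$, so by Theorem \ref{lemcomp}, $\mc{X}_0\leq\mc{G}$. Multiplying this inequality on the left by $\mc{A}\n$ and on the right by $\m\mc{A}$ preserves the order relation for Boolean tensors (the Einstein product is monotone in each argument, since it involves only Boolean sums and products), hence $\mc{A}=\mc{A}\n\mc{X}_0\m\mc{A}\leq\mc{A}\n\mc{G}\m\mc{A}$. On the other hand, applying Theorem \ref{lemcomp} to $\mc{X}=\mc{G}$ itself (which trivially satisfies $\mc{G}\leq\mc{G}$) gives $\mc{A}\n\mc{G}\m\mc{A}\leq\mc{A}$. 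The two inequalities together force $\mc{A}\n\mc{G}\m\mc{A}=\mc{A}$, which is (a), and also shows $\mc{G}$ is itself a generalized inverse of $\mc{A}$.

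For part (b): any generalized inverse $\mc{G}'$ of $\mc{A}$ satisfies $\mc{A}\n\mc{G}'\m\mc{A}=\mc{A}$, hence $\mc{A}\n\mc{G}'\m\mc{A}\leq\mc{A}$, and Theorem \ref{lemcomp} then gives $\mc{G}'\leq\mc{G}$. Combined with (a), which shows $\mc{G}$ is a generalized inverse, this is precisely the definition of the maximum generalized inverse. For part (c): by Remark \ref{rm11}, $\mc{G}\m\mc{A}\n\mc{G}$ is a reflexive generalized inverse of $\mc{A}$, so it suffices to show it is the largest one. If $\mc{H}$ is any reflexive generalized inverse of $\mc{A}$, then $\mc{H}$ is in particular a generalized inverse, so $\mc{H}\leq\mc{G}$ by (b); applying monotonicity of the Einstein product to $\mc{H}=\mc{H}\m\mc{A}\n\mc{H}\leq\mc{G}\m\mc{A}\n\mc{G}$ gives the claim.

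I do not expect a genuine obstacle here — the whole statement is a short consequence of Theorem \ref{lemcomp}, Remark \ref{rm11}, and the monotonicity of $\n$ and $\m$ with respect to $\leq$ on Boolean tensors. The one point that should be stated explicitly (and is the only thing that could be overlooked) is that multiplication by a fixed Boolean tensor on either side preserves the order relation $\leq$; this follows immediately from the definitions of Boolean addition and multiplication, but it is the load-bearing fact used at each step, so I would record it as a one-line observation before the main argument.
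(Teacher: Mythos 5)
Your argument is correct and follows essentially the same route the paper intends: everything is deduced from Theorem \ref{lemcomp} (applied once with the regular inverse $\mc{X}_0$ and once with $\mc{G}$ itself), together with Remark \ref{rm11} for the reflexive case and the monotonicity of the Einstein product with respect to $\leq$ on Boolean tensors. Your explicit one-line observation about monotonicity is a sensible addition, since it is indeed the load-bearing fact at each multiplication step.
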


Next, we discuss some equivalence results between generalized and other inverses.

\begin{theorem}\label{eqv14}
Let $\mc{A}\in\mathbb{R}^{I_1 \times \cdots \times  I_M\times J_1 \times \cdots \times   J_N}$ be any Boolean tensor, then the following statements are equivalent:
\begin{enumerate}
\item[(a)] $\mc{A}^{(1,4)}$ exists.
\item[(b)] $\mc{A}^{(1)}$ exists and $\mathfrak{R}(\mc{A}) = \mathfrak{R}(\mc{A}\n\mc{A}^T).$ 
\item[(c)] $(\mc{A}\n\mc{A}^T)^{(1)}$ exists and  $\mc{X}\m\mc{A}\n\mc{A}^T = \mc{A}^T$ for some tensor $\mc{X}.$
\end{enumerate}

\begin{proof}
Consider $(a)$ is true and $\mc{A}^{(1,4)}=\mc{X}.$ Existence of  $\mc{A}^{(1)}$ is trivial and hence $\mathfrak{R}(\mc{A}) = \mathfrak{R}(\mc{A}\n\mc{A}^T).$ Now we claim $(b)\Rightarrow (c).$ Let $\mc{A}^{(1)}$ exists and  $\mathfrak{R}(\mc{A}) = \mathfrak{R}(\mc{A}\n\mc{A}^T).$  Then there exist a Boolean tensor  $\mc{U}\in\mathbb{R}^{I_1\times\cdots \times I_M\times J_1\times\cdots\times  J_N}$ such that $\mc{A} = \mc{A}\n\mc{A}^T\m\mc{U}.$  Which implies $\mc{A}\n\mc{A}^{T}=\mc{A}\n\mc{A}^T\m\mc{U}\n\mc{U}^T\m\mc{A}\n\mc{A}^T.$ So generalized inverse of $\mc{A}\n\mc{A}^T$ exists. If we take $\mc{X}=\mc{A}^T\n(\mc{A}\n\mc{A}^T)^{(1)},$ then 
\begin{eqnarray*}
\mc{X}\m\mc{A}\n\mc{A}^T &=&  {\mc{A}^T}\m(\mc{A}\n\mc{A}^T )^{(1)}\m\mc{A}\n \mc{A}^T=\mc{U}^T\m\mc{A}\n\mc{A}^T\m(\mc{A}\n\mc{A}^T )^{(1)}\m\mc{A}\n\mc{A}^T\\
&=&\mc{U}^T\m\mc{A}\n\mc{A}^T=\mc{A}^T.
\end{eqnarray*}
Finally, we claim $(c)\Rightarrow (a).$ Let $\mc{X}\m\mc{A}\n\mc{A}^T = \mc{A}^T$. Taking transpose on both sides, we get $\mc{A}\n\mc{X}\m\mc{A} = \mc{A} $, As
\begin{eqnarray*}
(\mc{X}\m\mc{A})^T &= &\mc{A}^T\m\mc{X}^T = \mc{X}\m\mc{A}\n\mc{A}^T\m\mc{X}^T\\
&=& (\mc{X}\m\mc{A}\n\mc{A}^T\m\mc{X}^T)^T = (\mc{A}^T\m\mc{X}^T)^T = \mc{X}\m\mc{A}. 
\end{eqnarray*}
Thus $\mc{X}=\mc{A}^{(1,4)}.$ Hence the proof is complete.
\end{proof}
\end{theorem}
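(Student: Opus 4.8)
The plan is to prove the three-way equivalence $(a) \Leftrightarrow (b) \Leftrightarrow (c)$ by establishing the cycle $(a) \Rightarrow (b) \Rightarrow (c) \Rightarrow (a)$. The implication $(a) \Rightarrow (b)$ should be the easiest: if $\mc{A}^{(1,4)} = \mc{X}$ exists, then in particular $\mc{X}$ satisfies equation $(1)$, so $\mc{A}^{(1)}$ exists; and the identity $(\mc{X}\m\mc{A})^T = \mc{X}\m\mc{A}$ from equation $(4)$ lets me write $\mc{A} = \mc{A}\n\mc{X}\m\mc{A} = \mc{A}\n(\mc{X}\m\mc{A})^T = \mc{A}\n\mc{A}^T\m\mc{X}^T$, which by Lemma \ref{range-stan} gives $\mathfrak{R}(\mc{A}) \subseteq \mathfrak{R}(\mc{A}\n\mc{A}^T)$; the reverse inclusion $\mathfrak{R}(\mc{A}\n\mc{A}^T) \subseteq \mathfrak{R}(\mc{A})$ is immediate since $\mc{A}\n\mc{A}^T = \mc{A}\n\mc{A}^T$ is a product with $\mc{A}$ on the left. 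This gives $(b)$.

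For $(b) \Rightarrow (c)$: from $\mathfrak{R}(\mc{A}) = \mathfrak{R}(\mc{A}\n\mc{A}^T)$ and Lemma \ref{range-stan}, there is a Boolean tensor $\mc{U}$ with $\mc{A} = \mc{A}\n\mc{A}^T\m\mc{U}$. Substituting this into itself in a symmetric way (using $\mc{A}^T = \mc{U}^T\m\mc{A}\n\mc{A}^T$, obtained by transposing), I get $\mc{A}\n\mc{A}^T = \mc{A}\n\mc{A}^T\m\mc{U}\n\mc{U}^T\m\mc{A}\n\mc{A}^T$, so $\mc{U}\n\mc{U}^T$ is a $\{1\}$-inverse of $\mc{A}\n\mc{A}^T$, establishing its existence. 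Then I set $\mc{X} := \mc{A}^T\n(\mc{A}\n\mc{A}^T)^{(1)}$ and compute $\mc{X}\m\mc{A}\n\mc{A}^T = \mc{A}^T\m(\mc{A}\n\mc{A}^T)^{(1)}\m\mc{A}\n\mc{A}^T$; rewriting $\mc{A}^T = \mc{U}^T\m\mc{A}\n\mc{A}^T$ and using the defining equation of $(\mc{A}\n\mc{A}^T)^{(1)}$ collapses the middle, yielding $\mc{U}^T\m\mc{A}\n\mc{A}^T = \mc{A}^T$, which is $(c)$. The Boolean nature of the computation doesn't cause trouble here because we are only ever using associativity of $\n$ and $\m$ and the defining identities, not cancellation.

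For $(c) \Rightarrow (a)$: given $\mc{X}\m\mc{A}\n\mc{A}^T = \mc{A}^T$, transpose both sides to obtain $\mc{A}\n\mc{A}^T\m\mc{X}^T = \mc{A}$, and from this I want to extract equation $(1)$, i.e. $\mc{A}\n\mc{X}\m\mc{A} = \mc{A}$. The trick is to observe that $\mc{X}\m\mc{A}$ turns out to be symmetric: compute $(\mc{X}\m\mc{A})^T = \mc{A}^T\m\mc{X}^T$, then insert the hypothesis $\mc{A}^T = \mc{X}\m\mc{A}\n\mc{A}^T$ to rewrite this as $\mc{X}\m\mc{A}\n\mc{A}^T\m\mc{X}^T$, which is manifestly self-transpose, and then unwinding (using $\mc{A}\n\mc{A}^T\m\mc{X}^T = \mc{A}$) gives $(\mc{X}\m\mc{A})^T = \mc{X}\m\mc{A}\n\mc{A}^T\m\mc{X}^T = \mc{A}^T\m\mc{X}^T = \cdots$ — the chain shown in the excerpt's computation closes to $\mc{X}\m\mc{A}$, so equation $(4)$ holds. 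Once $(\mc{X}\m\mc{A})^T = \mc{X}\m\mc{A}$ is known, the transposed hypothesis becomes $\mc{A}\n(\mc{X}\m\mc{A}) = \mc{A}\n(\mc{X}\m\mc{A})^T = \mc{A}\n\mc{A}^T\m\mc{X}^T = \mc{A}$, i.e. equation $(1)$; thus $\mc{X}$ satisfies both $(1)$ and $(4)$, so $\mc{X} = \mc{A}^{(1,4)}$.

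The main obstacle is the $(c) \Rightarrow (a)$ step, specifically the bookkeeping needed to show $\mc{X}\m\mc{A}$ is symmetric directly from $\mc{X}\m\mc{A}\n\mc{A}^T = \mc{A}^T$; one has to be careful to substitute the hypothesis in the right place and then recognize the resulting expression as self-transpose before trying to conclude equation $(1)$. Everything else is a routine application of Lemma \ref{range-stan} together with associativity; no appeal to the order structure $\leq$ or to complements is needed, so the Boolean setting behaves exactly like the general case for this particular result.
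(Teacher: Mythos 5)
Your proposal is correct and follows essentially the same route as the paper: the cycle $(a)\Rightarrow(b)\Rightarrow(c)\Rightarrow(a)$, the same choice $\mc{X}=\mc{A}^T\n(\mc{A}\n\mc{A}^T)^{(1)}$ with $\mc{U}\n\mc{U}^T$ witnessing regularity of $\mc{A}\n\mc{A}^T$, and the same symmetry chain for $(\mc{X}\m\mc{A})^T=\mc{X}\m\mc{A}$ in the last step. Your only addition is that you spell out the $(a)\Rightarrow(b)$ range equality via $\mc{A}=\mc{A}\n\mc{A}^T\m\mc{X}^T$ and Lemma \ref{range-stan}, which the paper leaves implicit with its ``hence''.
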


Using the similar way, we can show the following theorem.

\begin{theorem}\label{eqv13}
Let $\mc{A}$ be any Boolean tensor, then the following statements are equivalent:
\begin{enumerate}
\item[(a)] $\mc{A}^{(1,3)}$ exists.
\item[(b)] $\mc{A}^{(1)}$ exists and $\mathfrak{R}(\mc{A}^T) = \mathfrak{R}(\mc{A}^T\m\mc{A}).$ 
\item[(c)] There exists a Boolean tensor $\mc{X}$ such that $\mc{A}^T=\mc{A}^T\m\mc{A}\n\mc{X}.$ 
\end{enumerate}
\end{theorem}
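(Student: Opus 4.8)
The plan is to mimic the structure of the proof of Theorem \ref{eqv14}, trading the roles of $\mc{A}$ and $\mc{A}^T$ (equivalently, multiplying from the left instead of the right). I would establish the cycle $(a)\Rightarrow(b)\Rightarrow(c)\Rightarrow(a)$, since each implication is a near-mirror image of the corresponding step already carried out for the $\{1,4\}$-case.

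For $(a)\Rightarrow(b)$: assume $\mc{X}=\mc{A}^{(1,3)}$ exists, so $\mc{A}\n\mc{X}\m\mc{A}=\mc{A}$ and $(\mc{A}\n\mc{X})^T=\mc{A}\n\mc{X}$. The existence of $\mc{A}^{(1)}$ is immediate. For the range equality, note $\mc{A}^T\m\mc{A}\subseteq$-argument: clearly $\mathfrak{R}(\mc{A}^T\m\mc{A})\subseteq\mathfrak{R}(\mc{A}^T)$ by Lemma \ref{range-stan}; conversely, from $\mc{A}=\mc{A}\n\mc{X}\m\mc{A}$ I take transposes to get $\mc{A}^T=\mc{A}^T\m\mc{X}^T\m\mc{A}^T=\mc{A}^T\m(\mc{A}\n\mc{X})^T$, and then use symmetry of $\mc{A}\n\mc{X}$ to write $\mc{A}^T=\mc{A}^T\m\mc{A}\n\mc{X}\cdot(\text{transposed appropriately})$ — concretely $\mc{A}^T=(\mc{A}\n\mc{X}\m\mc{A})^T=\mc{A}^T\m(\mc{A}\n\mc{X})=\mc{A}^T\m\mc{A}\n\mc{X}$, which by Lemma \ref{range-stan} gives $\mathfrak{R}(\mc{A}^T)\subseteq\mathfrak{R}(\mc{A}^T\m\mc{A})$. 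For $(b)\Rightarrow(c)$: from $\mathfrak{R}(\mc{A}^T)=\mathfrak{R}(\mc{A}^T\m\mc{A})$ and Lemma \ref{range-stan}, there is a tensor $\mc{U}$ with $\mc{A}^T=\mc{A}^T\m\mc{A}\n\mc{U}$; taking $\mc{X}=\mc{U}$ gives exactly the equation in $(c)$. (One also records, as in Theorem \ref{eqv14}, that $\mc{A}^T\m\mc{A}=\mc{A}^T\m\mc{A}\n\mc{U}\m\mc{U}^T\m\mc{A}^T\m\mc{A}$, so $(\mc{A}^T\m\mc{A})^{(1)}$ exists, but this is not needed for the stated three conditions.)

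For $(c)\Rightarrow(a)$: given $\mc{X}$ with $\mc{A}^T=\mc{A}^T\m\mc{A}\n\mc{X}$, take transposes to obtain $\mc{A}=\mc{X}^T\m\mc{A}^T\m\mc{A}=(\mc{A}\n\mc{X})^T\m\mc{A}$, wait — I should be careful: transposing $\mc{A}^T=\mc{A}^T\m\mc{A}\n\mc{X}$ yields $\mc{A}=\mc{X}^T\m(\mc{A}^T\m\mc{A})^T=\mc{X}^T\m\mc{A}^T\m\mc{A}$. Set $\mc{Y}=\mc{X}^T\m\mc{A}^T$. Then $\mc{A}=\mc{Y}\m\mc{A}$... but I want a $\{1,3\}$-inverse. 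The cleaner route, matching the proof of Theorem \ref{eqv14}, is: from $\mc{A}^T=\mc{A}^T\m\mc{A}\n\mc{X}$, right-multiply-and-transpose so that $\mc{A}=\mc{X}^T\m\mc{A}^T\m\mc{A}$; then $\mc{A}\n\mc{X}^T\m\mc{A}^T\m\mc{A}=\mc{A}\n\mc{X}^T\cdot\mc{A}^T\m\mc{A}$, and checking equation $(1)$ via $\mc{A}\n(\mc{X}^T\m\mc{A}^T)\m\mc{A}=\mc{A}$ because $(\mc{X}^T\m\mc{A}^T)\m\mc{A}=\mc{X}^T\m\mc{A}^T\m\mc{A}=\mc{A}$ only if that product equals a projector; and checking equation $(3)$ by showing $(\mc{A}\n\mc{X}^T\m\mc{A}^T)$ is symmetric from the identity $\mc{A}\n\mc{X}^T\m\mc{A}^T = (\mc{A}\n\mc{X}^T\m\mc{A}^T\m\mc{A}\n\mc{X}^T\m\mc{A}^T)$ and a transpose computation exactly parallel to the $(\mc{X}\m\mc{A})^T=\mc{X}\m\mc{A}$ chain in Theorem \ref{eqv14}. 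So the $\{1,3\}$-inverse produced is $\mc{A}^{(1,3)}=\mc{X}^T\m\mc{A}^T$.

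The main obstacle is bookkeeping the transposes and the switch of sides correctly: in the $\{1,4\}$ case symmetry of $\mc{X}\m\mc{A}$ is what drives everything, whereas here it is symmetry of $\mc{A}\n\mc{X}$, so every occurrence of "left" and "right" must be flipped, and one must make sure the index dimensions still match (so that $\mc{X}^T\m\mc{A}^T$ has the right shape to be a $\{1,3\}$-inverse of $\mc{A}$). There is no new idea needed beyond Lemma \ref{range-stan} and the transpose/Einstein-product identities; the content is entirely a dual of Theorem \ref{eqv14}, which is presumably why the authors write only "using the similar way."
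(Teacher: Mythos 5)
Your implications $(a)\Rightarrow(b)$ and $(b)\Rightarrow(c)$ are correct and are exactly the intended dual of Theorem \ref{eqv14}: from $\mc{A}=\mc{A}\n\mc{X}\m\mc{A}$ and symmetry of $\mc{A}\n\mc{X}$ you get $\mc{A}^T=(\mc{A}\n\mc{X}\m\mc{A})^T=\mc{A}^T\m\mc{A}\n\mc{X}$, and Lemma \ref{range-stan} then gives both range inclusions; $(b)\Rightarrow(c)$ is immediate from Lemma \ref{range-stan}. The genuine gap is in $(c)\Rightarrow(a)$. If $\mc{A}\in\mathbb{R}^{I_1\times\cdots\times I_M\times J_1\times\cdots\times J_N}$, then the tensor $\mc{X}$ in $(c)$ satisfies $\mc{A}^T=\mc{A}^T\m\mc{A}\n\mc{X}$ and hence lies in $\mathbb{R}^{J_1\times\cdots\times J_N\times I_1\times\cdots\times I_M}$, so $\mc{X}^T\m\mc{A}^T$ (contraction over the $J$-indices) lies in $\mathbb{R}^{I_1\times\cdots\times I_M\times I_1\times\cdots\times I_M}$; it has the wrong order to be a $\{1,3\}$-inverse of $\mc{A}$, and expressions you write such as $\mc{A}\n(\mc{X}^T\m\mc{A}^T)$ do not even typecheck. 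Moreover your verification of equation $(1)$ is left conditional (``only if that product equals a projector''), so nothing is actually proved there.

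The repair is simpler than what you attempted and mirrors the paper's $(c)\Rightarrow(a)$ step in Theorem \ref{eqv14}, where the tensor $\mc{X}$ from the hypothesis is itself the desired inverse. Transposing $\mc{A}^T=\mc{A}^T\m\mc{A}\n\mc{X}$ gives $\mc{A}=\mc{X}^T\n\mc{A}^T\m\mc{A}$. Then
\begin{equation*}
(\mc{A}\n\mc{X})^T=\mc{X}^T\n\mc{A}^T=\mc{X}^T\n\left(\mc{A}^T\m\mc{A}\n\mc{X}\right)=\left(\mc{X}^T\n\mc{A}^T\m\mc{A}\right)\n\mc{X}=\mc{A}\n\mc{X},
\end{equation*}
so condition $(3)$ holds for $\mc{X}$, and consequently
\begin{equation*}
\mc{A}\n\mc{X}\m\mc{A}=(\mc{A}\n\mc{X})^T\m\mc{A}=\mc{X}^T\n\mc{A}^T\m\mc{A}=\mc{A},
\end{equation*}
so condition $(1)$ holds as well; thus $\mc{X}=\mc{A}^{(1,3)}$ and the shapes are consistent throughout. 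With this replacement your cycle $(a)\Rightarrow(b)\Rightarrow(c)\Rightarrow(a)$ closes correctly.
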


We now discuss the characterization of Moore-Penrose inverse of Boolean tensors.  The similar proof of Theorem 3.2 in \cite{sun}, we have the uniqueness of the Moore-Penrose inverse of a Boolean tensor in $\mathbb{R}^{I_1\times \cdots \times I_M\times J_1\times\cdots\times  J_N}$, as follows.

\begin{lemma}\label{mpiu}
Let $\mc{A}\in\mathbb{R}^{I_1 \times \cdots \times  I_M\times J_1 \times \cdots \times  J_N}$ be any Boolean tensor. If the Moore-Penrose inverse of $\mc{A}$ exists then it is unique.
\end{lemma}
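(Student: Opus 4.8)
The plan is to mimic the classical matrix argument for uniqueness of the Moore-Penrose inverse, adapted to the Einstein product on Boolean tensors. Suppose $\mc{X}$ and $\mc{Y}$ both satisfy the four Penrose equations $(1)$--$(4)$ of Definition \ref{defgi} with respect to $\mc{A}$. The goal is to show $\mc{X} = \mc{Y}$ by a chain of substitutions in which each step either inserts $\mc{A}\n\mc{X}\m\mc{A} = \mc{A}$ (or the analogue for $\mc{Y}$), uses the symmetry conditions $(3)$ and $(4)$ to rewrite $\mc{A}\n\mc{X}$ and $\mc{X}\m\mc{A}$ as their transposes, and uses that transpose reverses the order of an Einstein product, $(\mc{U}\n\mc{V})^T = \mc{V}^T\m\mc{U}^T$.

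The key steps, in order, are as follows. First I would write $\mc{X} = \mc{X}\m\mc{A}\n\mc{X}$ using $(2)$ for $\mc{X}$, then replace the interior $\mc{A}$ via $\mc{A} = \mc{A}\n\mc{Y}\m\mc{A}$ (equation $(1)$ for $\mc{Y}$) to get $\mc{X} = \mc{X}\m\mc{A}\n\mc{Y}\m\mc{A}\n\mc{X}$. Next I would use condition $(3)$ for both tensors, $(\mc{A}\n\mc{X})^T = \mc{A}\n\mc{X}$ and $(\mc{A}\n\mc{Y})^T = \mc{A}\n\mc{Y}$, together with the transpose-reversal identity, to move $\mc{A}$ past $\mc{X}$ and $\mc{Y}$: this lets me rewrite $\mc{A}\n\mc{Y}\m\mc{A}\n\mc{X}$ as $(\mc{A}\n\mc{Y})^T\m(\mc{A}\n\mc{X})^T = (\mc{A}\n\mc{X}\m\mc{A}\n\mc{Y})^T = (\mc{A}\n\mc{Y})^T = \mc{A}\n\mc{Y}$, using $(1)$ for $\mc{X}$ in the middle. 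Collecting terms gives $\mc{X} = \mc{X}\m\mc{A}\n\mc{Y}$. A symmetric computation starting from $\mc{Y} = \mc{Y}\m\mc{A}\n\mc{Y}$, inserting $\mc{A} = \mc{A}\n\mc{X}\m\mc{A}$ and using condition $(4)$ for both tensors, yields $\mc{Y} = \mc{X}\m\mc{A}\n\mc{Y}$. Comparing the two identities gives $\mc{X} = \mc{Y}$.

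The main obstacle I anticipate is bookkeeping rather than conceptual: one must be careful that the transpose-reversal rule $(\mc{U}\n\mc{V})^T = \mc{V}^T\m\mc{U}^T$ and the mixed-product associativity are legitimate over the Boolean semiring with the Einstein product, and that the index ranges line up so that each of the four Penrose conditions is applied to a product of compatible shape (note the products alternate between $\n$ over the $J$-indices and $\m$ over the $I$-indices). Since Boolean addition is idempotent and commutative and Boolean multiplication distributes over it, the semiring is commutative and the transpose identity holds entrywise exactly as in the real case, so no genuine difficulty arises there; the only care needed is to track which star-product is in force at each stage. Once that is set up, the computation is the standard four-line Penrose uniqueness argument.
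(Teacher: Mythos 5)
Your argument is correct: it is the standard Penrose uniqueness chain (showing $\mc{X}=\mc{X}\m\mc{A}\n\mc{Y}=\mc{Y}$ via the four defining equations and transpose reversal), which goes through verbatim over the Boolean semiring since no subtraction is needed. This is essentially the same route the paper takes, as it simply invokes the analogue of Theorem 3.2 of \cite{sun}, whose proof is exactly this computation.
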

 In the next lemma, we discuss an estimate of Moore-Penrose inverse a tensor, as follows.
 \begin{lemma}\label{lemma2}
 Let   $\mc{A}\in\mathbb{R}^{I_1 \times \cdots \times  I_M\times J_1 \times \cdots \times  J_N}$ be a Boolean tensor and suppose $\mc{A}$ admits a Moore-Penrose inverse. Then  $\mc{A}\n\mc{A}^T\m\mc{A}\leq\mc{A}$
 \begin{proof}
 Let $\mc{B}= \mc{A}^T\m\mc{A}$. Since $\mc{B}$ is a Boolean tensor of even order and there are finitely many  Boolean tensors of same order, so there must exist positive integers $s,t\in\mathbb{N}$ such that $\mc{B}^s$ = $\mc{B}^{s+t}.$ Without loss of generality, we can assume that $s$ is the smallest positive integer for which $\mc{B}^s= \mc{B}^{s+t}$ for some $t\in \mathbb{N}.$ Now we will show $s =1.$ Suppose $s\geq 2$. Let $\mc{X}$ be the Moore-Penrose inverse of $\mc{A}$. Since $\mc{B}= \mc{A}^T\m\mc{A}$ and $\mc{B}^s= \mc{B}^{s+t}$ which implies $\mc{A}^T{\m}\mc{A}{\n}\mc{B}^{s-1} = \mc{A}^T{\m}\mc{A}{\n}\mc{B}^{s+t-1}.$  Pre-multiplying both side $\mc{X}^T$ yields $\mc{A}{\n}\mc{B}^{s-1} = \mc{A}{\n}\mc{B}^{s+t-1},$  which implies $\mc{A}\n\mc{A}^T{\m}\mc{A}{\n}\mc{B}^{s-2} = \mc{A}\n\mc{A}^T{\m}\mc{A}{\n}\mc{B}^{s+t-2}.$ Further, pre-multiplying both side $\mc{X}$ yields $\mc{A}^T\m\mc{X}^T\n\mc{A}^T{\m}\mc{A}{\n}\mc{B}^{s-2} =\mc{A}^T\m\mc{X}^T\n\mc{A}^T{\m}\mc{A}{\n}\mc{B}^{s+t-2},$  which implies $\mc{B}^{s-1}= \mc{B}^{s+t-1}.$
 
Thus, the minimality of $s$ is false and hence $s=1.$ Therefore $\mc{B}=\mc{B}^{t+1}$ for some $t\in\mathbb{N}.$ Again we have $\mc{B}=\mc{B}^{t+1}$, which implies $\mc{A}^T{\m}\mc{A} = \mc{A}^T{\m}\mc{A}{\n}\mc{B}^{t}$. Pre-multiplying $\mc{X}^T$ both sides yields
$\mc{A}\n\mc{X}{\m}\mc{A} = \mc{A}\n\mc{X}{\m}\mc{A}{\n}\mc{B}^{t}.$
 Thus 
 \begin{equation}\label{eqfl}
 \mc{A} = \mc{A}{\n}\mc{B}^{t}=\mc{A}\n(\mc{A}^T\m\mc{A})^t.
 \end{equation}
Applying Lemma \ref{lemma1} to $\mc{A}\n\mc{A}^T\n\mc{A}$ repetitively and combining Eq. (\ref{eqfl}), we obtain
$$\mc{A}\n\mc{A}^T\m\mc{A}\leq \mc{A}\n(\mc{A}^T\m\mc{A})^2\leq\cdots\leq \mc{A}\n(\mc{A}^T\m\mc{A})^t=\mc{A}.$$
  \end{proof}
\end{lemma}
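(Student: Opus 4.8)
The plan is to first prove the identity $\mc{A}=\mc{A}\n(\mc{A}^{T}\m\mc{A})^{t}$ for a suitable $t\in\mathbb{N}$, and then read off the inequality from Lemma \ref{lemma1} together with the fact that $\mc{C}\leq\mc{D}$ implies $\mc{C}\n\mc{B}\leq\mc{D}\n\mc{B}$ for Boolean tensors. Put $\mc{B}=\mc{A}^{T}\m\mc{A}\in\mathbb{R}^{J_1\times\cdots\times J_N\times J_1\times\cdots\times J_N}$. Since there are only finitely many Boolean tensors of a fixed order, the sequence $\mc{B},\mc{B}^{2},\mc{B}^{3},\dots$ is eventually periodic, so there is a smallest $s\geq1$ and some $t\geq1$ with $\mc{B}^{s}=\mc{B}^{s+t}$.

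The crux is to force $s=1$, and for this I would extract two cancellation rules out of the four defining equations of $\mc{X}:=\mc{A}^{\dagger}$ in Definition \ref{defgi}. Symmetry of $\mc{A}\n\mc{X}$ (equation $(3)$) gives $\mc{X}^{T}\n\mc{A}^{T}=\mc{A}\n\mc{X}$, so with equation $(1)$ one gets $\mc{X}^{T}\n\mc{A}^{T}\m\mc{A}=\mc{A}$; hence pre-multiplying any identity $\mc{A}^{T}\m\mc{A}\n\mc{Y}=\mc{A}^{T}\m\mc{A}\n\mc{Z}$ by $\mc{X}^{T}$ collapses it to $\mc{A}\n\mc{Y}=\mc{A}\n\mc{Z}$. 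Dually, symmetry of $\mc{X}\m\mc{A}$ (equation $(4)$) together with the transpose of equation $(1)$ gives $\mc{X}\m\mc{A}\n\mc{A}^{T}=\mc{A}^{T}$, so pre-multiplying $\mc{A}\n\mc{A}^{T}\m\mc{W}=\mc{A}\n\mc{A}^{T}\m\mc{V}$ by $\mc{X}$ collapses it to $\mc{A}^{T}\m\mc{W}=\mc{A}^{T}\m\mc{V}$.

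Now I would apply these rules alternately to peel factors off $\mc{B}^{s}=\mc{B}^{s+t}$. Writing the left-hand side as $\mc{A}^{T}\m\mc{A}\n\mc{B}^{s-1}$ and using the first rule gives $\mc{A}\n\mc{B}^{s-1}=\mc{A}\n\mc{B}^{s+t-1}$; if $s\geq2$, re-expressing the left side as $\mc{A}\n\mc{A}^{T}\m\mc{A}\n\mc{B}^{s-2}$ and applying the second rule yields $\mc{B}^{s-1}=\mc{B}^{s+t-1}$, contradicting minimality of $s$. Hence $s=1$, that is $\mc{B}=\mc{B}^{t+1}=\mc{A}^{T}\m\mc{A}\n\mc{B}^{t}$; comparing with the trivial identity $\mc{B}=\mc{A}^{T}\m\mc{A}\n\mc{I}$ and invoking the first rule once more gives $\mc{A}=\mc{A}\n\mc{B}^{t}=\mc{A}\n(\mc{A}^{T}\m\mc{A})^{t}$. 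Finally, Lemma \ref{lemma1} says $\mc{A}\leq\mc{A}\n\mc{A}^{T}\m\mc{A}=\mc{A}\n\mc{B}$, and right-multiplying this inequality by $\mc{B}$ repeatedly produces $\mc{A}\n\mc{A}^{T}\m\mc{A}=\mc{A}\n\mc{B}\leq\mc{A}\n\mc{B}^{2}\leq\cdots\leq\mc{A}\n\mc{B}^{t}=\mc{A}$, which is exactly the assertion.

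I expect the main obstacle to be the index bookkeeping behind the two cancellation rules. With the mixed orders $I_1\times\cdots\times I_M$ and $J_1\times\cdots\times J_N$ one must track which Einstein contraction ($\n$ or $\m$) links each pair of factors, apply associativity only where it is legitimate, and make sure each of the four Moore--Penrose equations is used in precisely the slot where it applies --- in particular that the transpose identities $\mc{X}^{T}\n\mc{A}^{T}=\mc{A}\n\mc{X}$ and $\mc{A}^{T}\m\mc{X}^{T}=\mc{X}\m\mc{A}$ hold with the correct groupings of indices. By contrast, the eventual-periodicity step (once ``smallest $s$'' is stated precisely) and the closing monotonicity chain are routine.
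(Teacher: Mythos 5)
Your proposal follows essentially the same route as the paper's own proof: the same $\mc{B}=\mc{A}^{T}\m\mc{A}$, the same finiteness/minimal-$s$ argument forcing $s=1$ via pre-multiplication by $\mc{X}^{T}$ and $\mc{X}$ (your two cancellation rules are exactly the identities $\mc{X}^{T}\n\mc{A}^{T}\m\mc{A}=\mc{A}$ and $\mc{X}\m\mc{A}\n\mc{A}^{T}=\mc{A}^{T}$ that the paper uses implicitly), the same identity $\mc{A}=\mc{A}\n\mc{B}^{t}$, and the same concluding chain from Lemma \ref{lemma1}. The argument is correct; making the cancellation rules explicit is a presentational improvement, not a different proof.
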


Using the Lemma \ref{lemma1} and \ref{lemma2} one can obtain  an interesting result on invertibility of Boolean tensor as follows, 

\begin{corollary}
A Boolean tensor $\mc{A}\in\mathbb{R}^{I_1 \times \cdots \times  I_N\times I_1 \times \cdots \times  I_N}$ is invertible if and only if 
 $$\mc{A}\n\mc{A}^T=\mc{A}^T\m\mc{A}=\mc{I}.$$ 
\end{corollary}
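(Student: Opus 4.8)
The plan is to prove the two implications separately, using the estimates from Lemmas~\ref{lemma1} and~\ref{lemma2}. The forward direction is immediate: if $\mc{A}\in\mathbb{R}^{I_1\times\cdots\times I_N\times I_1\times\cdots\times I_N}$ is invertible, then $\mc{A}^{-1}$ exists and, being a Boolean tensor, its entries lie in $\{0,1\}$; one checks directly that $\mc{A}^{-1}$ satisfies all four Moore–Penrose equations only if $\mc{A}^{-1}=\mc{A}^T$, and in fact I would argue more cheaply that invertibility of $\mc{A}$ forces $\mc{A}^{-1}$ to be the (unique) Moore–Penrose inverse, so Lemma~\ref{lemma2} applies and gives $\mc{A}\n\mc{A}^T\m\mc{A}\le\mc{A}$. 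Then, since $\mc{A}$ is invertible, cancelling $\mc{A}$ on both sides (multiplying by $\mc{A}^{-1}$ on the left, say) yields $\mc{A}^T\m\mc{A}\le\mc{I}$; the reverse inequality $\mc{I}\le\mc{A}^T\m\mc{A}$ comes from Lemma~\ref{lemma1} applied with the roles arranged so that $\mc{A}^T\le \mc{A}^T\n\mc{A}\m\mc{A}^T$ and then multiplying by $\mc{A}^{-1}$ appropriately. Combining, $\mc{A}^T\m\mc{A}=\mc{I}$, and a symmetric argument (or taking transposes) gives $\mc{A}\n\mc{A}^T=\mc{I}$.

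For the converse, suppose $\mc{A}\n\mc{A}^T=\mc{A}^T\m\mc{A}=\mc{I}$. Then by the very definition of orthogonality in the Preliminaries, $\mc{A}$ is orthogonal, hence invertible with $\mc{A}^{-1}=\mc{A}^T$. So this direction is essentially a restatement.

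The one subtle point — and what I expect to be the main obstacle — is the forward direction's first step: over a Boolean algebra, having a two-sided multiplicative inverse does not automatically mean $\mc{A}$ admits a Moore–Penrose inverse in the sense of Definition~\ref{defgi}, because the transpose conditions $(3)$ and $(4)$ must also hold. I would handle this by noting that if $\mc{A}$ is invertible then $\mc{A}\n\mc{A}^{-1}=\mc{I}=\mc{A}^{-1}\n\mc{A}$, and $\mc{I}$ is symmetric, so $\mc{A}^{-1}$ trivially satisfies $(1)$–$(4)$; thus $\mc{A}^{-1}=\mc{A}^\dagger$ exists, and Lemma~\ref{lemma2} is legitimately applicable. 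With that bridge in place the rest is just the cancellation manipulations described above, which are routine once one is careful that multiplication by $\mc{A}^{-1}$ respects the order relation $\le$ on Boolean tensors (it does, since left/right multiplication by a fixed Boolean tensor is monotone with respect to $\le$, a fact that follows directly from the componentwise definitions of $\n$, $\m$ and $\le$).

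Finally I would remark that, once $\mc{A}^T\m\mc{A}=\mc{I}$ is obtained, the identity $\mc{A}\n\mc{A}^T=\mc{I}$ can alternatively be deduced by a one-line transpose argument — applying the already-proved half to $\mc{A}^T$ in place of $\mc{A}$ — which keeps the write-up short and avoids repeating the estimates.
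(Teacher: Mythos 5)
Your proof is correct and takes essentially the approach the paper intends for this corollary: it combines Lemma~\ref{lemma1} with Lemma~\ref{lemma2} (whose hypothesis you legitimately secure by noting that $\mc{A}^{-1}$ satisfies all four Moore--Penrose equations because $\mc{A}\n\mc{A}^{-1}=\mc{A}^{-1}\n\mc{A}=\mc{I}$ is symmetric), and then cancels by monotone multiplication with $\mc{A}^{-1}$, the converse being immediate from the definition of orthogonality. No gaps.
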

From the Definition \ref{perm}, we obtain
\begin{eqnarray*}
(\mc{P}\n\mc{P}^T)_{i_1 \cdots i_Nj_1\cdots j_N}&=&\sum_{k_1\cdots k_N}(\mc{P})_{i_1 \cdots i_Nk_1\cdots k_N}(\mc{P}^T)_{k_1 \cdots k_Nj_1\cdots j_N}\\
&=&\sum_{k_1\cdots k_N}(\mc{P})_{i_1 \cdots i_Nk_1\cdots k_N}(\mc{P})_{j_1 \cdots j_Nk_1\cdots k_N}\\
&=&(\mc{P})_{i_1 \cdots i_N\pi(j_1)\cdots \pi(j_N)}(\mc{P})_{j_1 \cdots j_N\pi(j_1)\cdots \pi(j_N)}\\
&=& \left\{\begin{array}{cc}
   1  & \mbox{ if } i_s=j_s \mbox{ for all } 1\leq s\leq N. \\
    0 & \mbox{ otherwise.}
\end{array}\right.\\
&=&(\mc{I})_{i_1 \cdots i_Nj_1\cdots j_N}.
\end{eqnarray*}
Similar way, we can also show $\mc{P}^T\n\mc{P}=\mc{I}.$ Therefore, every permutation tensors are orthogonal and  invertible. Adopting this result, we now present a characterization of the permutation tensor, as follows.
\begin{preposition}\label{permu}
A Boolean tensor $\mc{A}$ has an inverse {\it if and only if} it is a permutation tensor.
\end{preposition}
Next result contains five equivalent conditions involving the existence of Moore-Penrose inverse of a Boolean tensor.
 \begin{theorem}\label{thm1}
 Let $\mc{A}\in\mathbb{R}^{I_1 \times \cdots \times  I_M \times J_1\times \cdots \times  J_N.}$ be any tensor.  Then  the  following statements are equivalent:
 \begin{enumerate}
 \item[(i)] The  Moore-Penrose  inverse  of $ \mc{A}$ exists and unique.
 \item[(ii)] $\mc{A}{\n}\mc{A}^T{\n}\mc{A} \leq \mc{A}.$
 \item[(iii)] $\mc{A}{\n}\mc{A}^T{\n}\mc{A} = \mc{A}.$
\item[(iv)] The Moore-Penrose inverse of $\mc{A}$ exists and equals $\mc{A}^T$.
\item[(v)] There exist a tensor $\mc{G}$ such that 
  $\mc{G}{\n}\mc{A}{\n}\mc{A}^T=\mc{A}^T$ and $\mc{A}^T{\n}\mc{A}{\n}\mc{G}=\mc{A}^T$.
\end{enumerate}
 \end{theorem}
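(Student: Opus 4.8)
The plan is to establish a cycle of implications $(i)\Rightarrow(iii)\Rightarrow(iv)\Rightarrow(v)\Rightarrow(i)$ together with the trivial $(iii)\Rightarrow(ii)$ and the reverse direction $(ii)\Rightarrow(iii)$, which is immediate from Lemma~\ref{lemma1} since that lemma gives $\mc{A}\leq\mc{A}\n\mc{A}^T\m\mc{A}$ unconditionally, so an inequality in the other direction forces equality. The substantive content is therefore $(i)\Rightarrow(iii)$ and $(iv)\Rightarrow(v)\Rightarrow(i)$; the implication $(iii)\Rightarrow(iv)$ is essentially a verification.

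First I would treat $(i)\Rightarrow(iii)$. Assuming the Moore-Penrose inverse of $\mc{A}$ exists, Lemma~\ref{lemma2} yields $\mc{A}\n\mc{A}^T\m\mc{A}\leq\mc{A}$, and combined with the reverse inequality from Lemma~\ref{lemma1} we get $\mc{A}\n\mc{A}^T\m\mc{A}=\mc{A}$, which is $(iii)$ (after noting $\m$ and $\n$ play the symmetric roles here and the orders match since $\mc{A}^T\in\mathbb{R}^{J_1\times\cdots\times J_N\times I_1\times\cdots\times I_M}$). Next, for $(iii)\Rightarrow(iv)$, I would simply check that $\mc{X}=\mc{A}^T$ satisfies the four Penrose equations of Definition~\ref{defgi}: equation $(1)$ is exactly $(iii)$; equation $(2)$ follows by post- or pre-composing $(iii)$ with $\mc{A}^T$ and using associativity; and equations $(3)$ and $(4)$ hold because $(\mc{A}\n\mc{A}^T)^T=\mc{A}\n\mc{A}^T$ and $(\mc{A}^T\m\mc{A})^T=\mc{A}^T\m\mc{A}$ by the definition of transpose together with Proposition~\ref{equitc}-style symmetry of $\n$ under transposition. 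Uniqueness is then Lemma~\ref{mpiu}.

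For $(iv)\Rightarrow(v)$, given that $\mc{A}^T$ is the Moore-Penrose inverse, take $\mc{G}=\mc{A}^T$; then $\mc{G}\n\mc{A}\n\mc{A}^T=\mc{A}^T\n\mc{A}\n\mc{A}^T=\mc{A}^T$ by equation $(2)$, and symmetrically $\mc{A}^T\n\mc{A}\n\mc{G}=\mc{A}^T$. Finally $(v)\Rightarrow(i)$ is the step I expect to be the main obstacle, since we must manufacture a genuine Moore-Penrose inverse out of the two one-sided identities. The idea is: from $\mc{G}\n\mc{A}\n\mc{A}^T=\mc{A}^T$, take transposes to get $\mc{A}\n\mc{A}^T\n\mc{G}^T=\mc{A}$, and likewise $\mc{G}^T\n\mc{A}^T\n\mc{A}=\mc{A}$ from the other relation; substituting these back and chaining them should collapse everything to $\mc{A}\n\mc{A}^T\m\mc{A}=\mc{A}$, i.e. condition $(iii)$, whence $(i)$ follows via the already-established $(iii)\Rightarrow(i)$ route. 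Concretely I would compute $\mc{A}=\mc{G}^T\n\mc{A}^T\n\mc{A}=(\mc{G}\n\mc{A}\n\mc{A}^T)^T\n\mc{A}\cdot(\text{rearranged})$ and exploit the range/cancellation lemmas (Theorem~\ref{ltcan}, Corollary~\ref{rtcan}) to absorb the occurrences of $\mc{G}$; the delicate point is that Boolean cancellation is not free, so one must check the relevant range conditions $\mathfrak{R}(\mc{A})=\mathfrak{R}(\mc{A}\n\mc{A}^T)$ hold, which themselves follow from the hypotheses of $(v)$. Once $(iii)$ is recovered, $\mc{A}^T$ is a Moore-Penrose inverse and Lemma~\ref{mpiu} gives uniqueness, closing the cycle.
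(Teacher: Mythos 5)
Your cycle $(i)\Rightarrow(ii)\Rightarrow(iii)\Rightarrow(iv)\Rightarrow(i)$ agrees with the paper (both rest on Lemma \ref{lemma2} for $(i)\Rightarrow(ii)$, Lemma \ref{lemma1} for $(ii)\Rightarrow(iii)$, and Lemma \ref{mpiu} for uniqueness), and your $(iv)\Rightarrow(v)$ with $\mc{G}=\mc{A}^T$ is fine. The genuine gap is the step $(v)\Rightarrow(i)$, exactly the one you flag as the obstacle. Your plan is to ``collapse'' the two identities of $(v)$ to condition $(iii)$ by transposing, re-substituting, and invoking the cancellation results (Theorem \ref{ltcan}, Corollary \ref{rtcan}) under the range condition $\mathfrak{R}(\mc{A})=\mathfrak{R}(\mc{A}\n\mc{A}^T)$. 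That cannot succeed as described: every tool in that list (transposition, substitution of the hypotheses, Lemma \ref{range-stan}, and the two cancellation statements) is equally valid over the real field, where $(v)$ does \emph{not} imply $(iii)$. Already the $1\times 1$ real example $\mc{A}=(2)$, $\mc{G}=(1/2)$ satisfies $\mc{G}\mc{A}\mc{A}^T=\mc{A}^T$ and $\mc{A}^T\mc{A}\mc{G}=\mc{A}^T$, and has $\mathfrak{R}(\mc{A})=\mathfrak{R}(\mc{A}\mc{A}^T)$, yet $\mc{A}\mc{A}^T\mc{A}=(8)\neq\mc{A}$. So any correct derivation must use Boolean-specific information at precisely this point, and the only Boolean-specific fact in your kit, Lemma \ref{lemma1}, gives $\mc{A}\leq\mc{A}\n\mc{A}^T\m\mc{A}$ --- the direction you already have, not the bound $\mc{A}\n\mc{A}^T\m\mc{A}\leq\mc{A}$ you need. (An entrywise attempt runs into the same circularity: to certify the needed entry of $\mc{A}$ one is led back to an instance of the very inequality being proved.)

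The paper closes this step differently: it never tries to reach $(iii)$ directly from $(v)$. Instead, from $\mc{G}\m\mc{A}\n\mc{A}^T=\mc{A}^T$ it transposes to $\mc{A}\n\mc{A}^T\m\mc{G}^T=\mc{A}$, and by substituting the two identities into each other it first establishes $\mc{A}\n\mc{G}\m\mc{A}=\mc{A}$ and the symmetry of $\mc{A}\n\mc{G}$ and of $\mc{G}\m\mc{A}$; it then verifies by direct computation that $\mc{X}=\mc{G}\m\mc{A}\n\mc{G}$ satisfies all four Penrose equations, so the Moore--Penrose inverse exists, and Lemma \ref{mpiu} gives uniqueness, i.e.\ $(i)$. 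Condition $(iii)$ is only recovered afterwards, through Lemma \ref{lemma2}, whose proof is where the Boolean-specific ingredient lives (finiteness of the set of Boolean tensors of a fixed order and eventual periodicity of the powers of $\mc{A}^T\m\mc{A}$ --- an argument that needs a Moore--Penrose inverse already in hand). If you replace your ``collapse to $(iii)$'' step by this direct verification of the Penrose equations for $\mc{G}\m\mc{A}\n\mc{G}$, the remainder of your proposal goes through and coincides with the paper's proof.
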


 \begin{proof}
 If $(i)$ holds then by Lemma \ref{lemma2} $(ii)$ holds. Also $(ii)\Rightarrow (iii)$ by Lemma \ref{lemma1}. The statements $(iii)\Rightarrow (iv)$ and $(iv)\Rightarrow (i)$ are trivial by definition. Now we will show equivalence between $(i)$ and $(v)$. Suppose $(i)$ holds. If we take $\mc{G}=\mc{A}^T$ then $(v)$ hold. Conversely assume $(v)$ is true. To prove Moore-Penrose inverse of $A$ exists, first we show the following results:
 \begin{itemize}
     \item $\mc{A}\n\mc{G}\m\mc{A} = \mc{A}$\\
     Since $\mc{G}\m \mc{A}\n\mc{A}^T=\mc{A}^T$ which implies $ \mc{A}\n\mc{A}^T\m\mc{G}^T = \mc{A}. $ Pre multiplying $\mc{G}$ and post multiplying $\mc{A}^T$ both sides, we obtain $\mc{G}\m\mc{A}\n\mc{A}^T\m\mc{G}^T\n\mc{A}^T=\mc{G}\m\mc{A}\n\mc{A}^T. $ 
     Thus $\mc{A}^T\m\mc{G}^T\n\mc{A}^T=\mc{A}^T.$ 
     Hence $\mc{A}\n\mc{G}\m\mc{A} = \mc{A}.$
      \item $(\mc{G}\m\mc{A})^T = \mc{A}^T\m\mc{G}^T=\mc{G}{\m}\mc{A}{\n}\mc{A}^T\m\mc{G}^T=\mc{G}\m\mc{A}.$ Therefore $\mc{G}\m\mc{A}$ is symmetric.
     \item $(\mc{A}\n\mc{G})^T = \mc{G}^T\n\mc{A}^T=\mc{G}^T\n\mc{A}^T\m\mc{A}\n\mc{G}=\mc{A}\n\mc{G}.$ Thus $\mc{A}\n\mc{G}$ is symmetric.
          \end{itemize}
 Now we will show the tensor $\mc{X}=\mc{G}\m\mc{A}\n\mc{G}$ is the Moore-Penrose of $\mc{A}.$ Since 
 \begin{enumerate}
     \item [$\bullet$] 
  $\mc{A}\n\mc{X}\m\mc{A} =\mc{A}\n\mc{G}\m\mc{A}=\mc{A}.$
   \item [$\bullet$] 
  $\mc{X}\m\mc{A}\n\mc{X}  =\mc{G}\m\mc{A}\n\mc{G}\m\mc{A}\n\mc{G} =\mc{X}.$
   \item [$\bullet$] 
  $(\mc{A}\n\mc{X})^T=(\mc{A}\n\mc{G}\m\mc{A}\n\mc{G})^T=(\mc{A}\n\mc{G})^T\m(\mc{A}\n\mc{G})^T
 =\mc{A}\n\mc{G}\m\mc{A}\n\mc{G}=\mc{A}\n\mc{X}.$
   \item [$\bullet$] 
 $(\mc{X}\m\mc{A})^T=(\mc{G}\m\mc{A}\n\mc{G}\m\mc{A})^T=(\mc{G}\m\mc{A})^T\n(\mc{G}\m\mc{A})^T
 =\mc{G}\m\mc{A}\n\mc{G}\m\mc{A}=\mc{X}\m\mc{A}.$
  \end{enumerate}

 Therefore, $\mc{X}$ is the Moore-Penrose inverse of $\mc{A}$ and By Lemma \ref{mpiu} it is unique.
  \end{proof}

  The reverse order law for the Moore-Penrose inverses of tensors yields a class  of challenging problems that are fundamental research in the theory of generalized inverses. Research on reverse order law tensors has been very active recently \cite{Mispa18, PanRad18} but as per the above theorem it is trivially true in case of Boolean tensors.  

  \begin{remark}
   If Moore-Penrose inverses of $\mc{A}\in\mathbb{R}^{I_1 \times \cdots \times  I_M\times  J_1 \times \cdots \times  J_N}$, $\mc{B}\in\mathbb{R}^{J_1 \times \cdots \times   J_N\times K_1 \times \cdots \times K_L},$ and $\mc{A}\n\mc{B}$ exists, then the reverse-order law for the Moore-Penrose inverse is always exists, i.e., 
   $$(\mc{A}\n\mc{B})^\dagger=\mc{B}^\dagger\n\mc{A}^\dagger.$$
  \end{remark}

\subsection{Weighted Moore-Penrose inverse}

Utilizing the Einstein product, weighted Moore-Penrose inverse of even-order tensor and arbitrary-order tensor was introduced in \cite{BehMM19, we17}, very recently. This work motivate us to study weighted Moore-Penrose inverse for Boolean tensors.

\begin{definition}\label{wmpi}
Let $\mc{A}\in\mathbb{R}^{I_1 \times \cdots \times  I_M\times J_1 \times \cdots \times J_N}$,  $\mc{M}\in\mathbb{R}^{I_1 \times \cdots \times  I_M\times I_1 \times \cdots \times  I_M}$ and $\mc{N}\in\mathbb{R}^{J_1  \times \cdots \times  J_N\times J_1 \times \cdots \times J_N}$ be three Boolean tensors. If a Boolean tensor $\mc{Z}\in\mathbb{R}^{J_1 \times \cdots \times J_N\times I_1 \times \cdots \times   I_M}$ satisfying
\vspace{-.5cm}
\begin{eqnarray*}
&(1)&\mc{A}\n\mc{Z}\m\mc{A} = \mc{A},\\
&(2)&\mc{Z}\m\mc{A}\n\mc{Z} = \mc{Z},\\
&(3)&(\mc{M}\m\mc{A}\n\mc{Z})^T = \mc{M}\m\mc{A}\n\mc{Z},\\
&(4)&(\mc{Z}\m\mc{A}\n\mc{N})^T = \mc{Z}\m\mc{A}\n\mc{N},
\end{eqnarray*}
is called weighted Moore-Penrose inverse of $\mc{A}$ and it is denoted by $A^{\dagger}_{\mc{M},\mc{N}}.$
\end{definition}
Note that, the weighted Moore-Penrose inverse need not be unique in general. This can be verified by the following example.
\begin{example}
Let the Boolean tensor
$~\mc{A}=(a_{ijkl}) \in \mathbb{R}^{{2\times3}\times{2 \rtimes 3}}$ be defined as in Example \ref{example18} with $\mc{N}=\mc{O} \in \mathbb{R}^{{2\times3}\times{2 \rtimes 3}}$ and  $\mc{M}=(m_{ijkl}) \in \mathbb{R}^{{2\times3}\times{2 \rtimes 3}}$ such that 
\begin{eqnarray*}
m_{ij11} =m_{ij21}=
    \begin{pmatrix}
    1 & 0 & 0 \\
    0 & 0 &  0
    \end{pmatrix},
m_{ij12} = m_{ij22} =
    \begin{pmatrix}
     1 & 0 & 0\\
     0 & 0 & 1
    \end{pmatrix},
m_{ij13} =m_{ij23}
    \begin{pmatrix}
     0 & 0 & 0\\
     0 & 0 & 1
\end{pmatrix}.
\end{eqnarray*}
Then it can be easily verified that both $~\mc{X}=(x_{ijkl}) \in \mathbb{R}^{{2\times3}\times{2 \rtimes 3}}$, $~\mc{Y}=(y_{ijkl}) \in \mathbb{R}^{{2\times3}\times{2 \rtimes 3}}$ defined in Example \ref{example18} satisfies all conditions of Definition \ref{wmpi}. \end{example}
The uniqueness and existence of weighted Moore-Penrose inverse and some equivalent properties will be discussed in the next part of this subsection.

 \begin{theorem}\label{uwmpi}
Let $\mc{A}\in\mathbb{R}^{I_1\times \cdots\times I_M\times  J_1 \times \cdots\times J_N},~~\mc{M}\in\mathbb{R}^{I_1 \times \cdots\times I_M\times I_1 \times \cdots\times I_M},$\\ $\mc{N}\in\mathbb{R}^{J_1 \times \cdots\times J_N\times  J_1 \times \cdots\times J_N}$ be three Boolean tensors with $\mathfrak{R}(\mc{A})=\mathfrak{R}(\mc{A}\n\mc{N})$ and $\mathfrak{R}(\mc{A}^T)=\mathfrak{R}(\mc{A}^T\m\mc{M}^T).$  If $\mc{A}_{\mc{M},\mc{N}}^{\dagger}$ exists, then
\begin{enumerate}
\item[(a)] $\mc{A}\n\mc{N}^T\n\mc{A}^T = \mc{A}\n\mc{N}\n\mc{A}^T;$
\item[(b)] $\mc{A}^T\m\mc{M}^T\m\mc{A} = \mc{A}^T\m\mc{M}\m\mc{A}; $
\item[(c)] $ \mc{A}_{\mc{M},\mc{N}}^{\dagger} $  is unique.
\end{enumerate}
\end{theorem}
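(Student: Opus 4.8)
The plan is to extract all three statements from a single consequence of the four weighted Moore-Penrose equations together with the two range hypotheses, exploiting the fact that, over the Boolean semiring, the same cancellation machinery proved earlier (Theorem~\ref{ltcan}, Corollary~\ref{rtcan}) applies. Let $\mc{Z}=\mc{A}_{\mc{M},\mc{N}}^{\dagger}$. First I would record the two ``reduced'' identities obtained by combining equation $(1)$, $\mc{A}\n\mc{Z}\m\mc{A}=\mc{A}$, with the symmetry equations $(3)$ and $(4)$: taking transposes in $(3)$ and $(4)$ and substituting into $(1)$ yields $\mc{A}\n\mc{N}\m\mc{Z}^T\m\mc{A}^T\m\mc{M}^T\m\mc{A}=\mc{A}$ (after massaging with $(3)$, $(4)$), so that $\mc{A}$ factors through both $\mc{A}\n\mc{N}$ on the left-ish and $\mc{M}\m\mc{A}$-type blocks. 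The point is that the range hypotheses $\mathfrak{R}(\mc{A})=\mathfrak{R}(\mc{A}\n\mc{N})$ and $\mathfrak{R}(\mc{A}^T)=\mathfrak{R}(\mc{A}^T\m\mc{M}^T)$ give, via Lemma~\ref{range-stan}, tensors $\mc{U},\mc{V}$ with $\mc{A}=\mc{A}\n\mc{N}\n\mc{U}$ and $\mc{A}^T=\mc{A}^T\m\mc{M}^T\m\mc{V}$, i.e. $\mc{A}=\mc{V}^T\m\mc{M}\m\mc{A}$.

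For $(a)$: starting from $\mc{A}=\mc{A}\n\mc{N}\n\mc{U}$, I would compute $\mc{A}\n\mc{N}^T\n\mc{A}^T$ by writing $\mc{A}^T=\mc{U}^T\n\mc{N}^T\n\mc{A}^T$ and then using equation $(4)$, $(\mc{Z}\m\mc{A}\n\mc{N})^T=\mc{Z}\m\mc{A}\n\mc{N}$, to transfer the transpose from $\mc{N}$ onto the rest of the product; the symmetry of $\mc{Z}\m\mc{A}\n\mc{N}$ is exactly what lets $\mc{N}^T$ be swapped for $\mc{N}$ inside an expression premultiplied by something in $\mathfrak{R}(\mc{A})=\mathfrak{R}(\mc{A}\n\mc{N})$. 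Concretely, $\mc{A}\n\mc{N}^T\n\mc{A}^T=\mc{A}\n\mc{Z}\m\mc{A}\n\mc{N}^T\n\mc{A}^T$ (by $(1)$), then insert $\mc{A}=\mc{A}\n\mc{N}\n\mc{U}$ so a factor $\mc{Z}\m\mc{A}\n\mc{N}$ appears, replace it by its transpose using $(4)$, and unwind; this should collapse $\mc{N}^T$ to $\mc{N}$. Statement $(b)$ is the mirror image, using $(3)$ and the hypothesis on $\mathfrak{R}(\mc{A}^T)$, so I would just say ``by an analogous argument, interchanging the roles of the two weights and using $(3)$ in place of $(4)$.''

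For $(c)$, uniqueness: suppose $\mc{Z}_1,\mc{Z}_2$ both satisfy $(1)$--$(4)$. The standard chain is $\mc{Z}_1=\mc{Z}_1\m\mc{A}\n\mc{Z}_1=\mc{Z}_1\m(\mc{A}\n\mc{Z}_1)$ and one wants to migrate along $\mc{A}\n\mc{Z}_1=\mc{A}\n\mc{Z}_2\m\mc{A}\n\mc{Z}_1$, etc.; over a field one would use $\mc{A}\n\mc{Z}_i=\mc{M}^{-1}(\mc{M}\m\mc{A}\n\mc{Z}_i)$ and transpose. Here, since $\mc{M},\mc{N}$ need not be invertible, I would instead feed the relations $\mc{M}\m\mc{A}\n\mc{Z}_1\m\mc{A}=\mc{M}\m\mc{A}=\mc{M}\m\mc{A}\n\mc{Z}_2\m\mc{A}$ and the corresponding ones with $\mc{N}$ into Corollary~\ref{rtcan} and Theorem~\ref{ltcan} — whose left/right cancellation hypotheses are precisely $\mathfrak{R}(\mc{A})=\mathfrak{R}(\mc{A}\n\mc{N})$ and $\mathfrak{R}(\mc{A}^T)=\mathfrak{R}(\mc{A}^T\m\mc{M}^T)$ — to cancel the weights and arrive at $\mc{A}\n\mc{Z}_1=\mc{A}\n\mc{Z}_2$ and $\mc{Z}_1\m\mc{A}=\mc{Z}_2\m\mc{A}$, whence $\mc{Z}_1=\mc{Z}_1\m\mc{A}\n\mc{Z}_1=\mc{Z}_1\m\mc{A}\n\mc{Z}_2=\mc{Z}_2\m\mc{A}\n\mc{Z}_2=\mc{Z}_2$.

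\textbf{Main obstacle.} The delicate point is parts $(a)$ and $(b)$: over the Boolean semiring there is no subtraction, so I cannot argue ``$X-Y=0$'' and must instead prove $\leq$ in both directions, i.e. $\mc{A}\n\mc{N}^T\n\mc{A}^T\leq\mc{A}\n\mc{N}\n\mc{A}^T$ and the reverse. Each inequality will follow from one of the factorizations $\mc{A}=\mc{A}\n\mc{N}\n\mc{U}$ (and its transpose) combined with a symmetry equation, but getting the transpose to land in the right place without an inverse of $\mc{M}$ or $\mc{N}$ is where the range hypotheses are genuinely needed, and verifying that the two displayed identities really are symmetric consequences (so the ``analogously'' in $(b)$ is honest) requires care with the order of the Einstein products and which index blocks contract. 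The uniqueness step, once $(a)$ and $(b)$ are in hand as cancellation-enablers, is then routine.
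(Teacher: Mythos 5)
Your plan for parts (a) and (b) is in the right spirit but the concrete chain you give does not close, and it is also more complicated than needed: no range hypothesis and no two-sided inequality argument is required there. The identity in (a) follows purely from equations $(1)$ and $(4)$ by applying the transpose of $(1)$, $\mc{A}^T=\mc{A}^T\m\mc{Z}^T\n\mc{A}^T$, to the \emph{trailing} factor: $\mc{A}\n\mc{N}^T\n\mc{A}^T=\mc{A}\n\left(\mc{N}^T\n\mc{A}^T\m\mc{Z}^T\right)\n\mc{A}^T=\mc{A}\n\left(\mc{Z}\m\mc{A}\n\mc{N}\right)\n\mc{A}^T=\mc{A}\n\mc{N}\n\mc{A}^T$, a pure equality chain (and (b) is the mirror with $(3)$). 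Your variant instead applies $(1)$ to the leading $\mc{A}$ and then inserts $\mc{A}=\mc{A}\n\mc{N}\n\mc{U}$; after swapping the factor $\mc{Z}\m\mc{A}\n\mc{N}$ by $(4)$ you are left with a stray $\mc{U}\n\mc{N}^T$ block and the expression does not collapse to $\mc{A}\n\mc{N}\n\mc{A}^T$. This is repairable, but as described it does not work.

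The genuine gap is in (c), which you call routine but which is where the real work lies. The relations you propose to feed into Theorem~\ref{ltcan} and Corollary~\ref{rtcan}, namely $\mc{M}\m\mc{A}\n\mc{Z}_1\m\mc{A}=\mc{M}\m\mc{A}=\mc{M}\m\mc{A}\n\mc{Z}_2\m\mc{A}$ and their $\mc{N}$-analogues, encode only equation $(1)$; they are satisfied by arbitrary $\{1\}$- or $\{1,2\}$-inverses, which are far from unique (Example~\ref{example18}), so no cancellation argument starting from them can possibly yield uniqueness. Moreover the cancellation lemmas never let you strip $\mc{A}$ itself: the hypotheses $\mathfrak{R}(\mc{A})=\mathfrak{R}(\mc{A}\n\mc{N})$ and $\mathfrak{R}(\mc{A}^T)=\mathfrak{R}(\mc{A}^T\m\mc{M}^T)$ only permit cancelling the weights, and applying Corollary~\ref{rtcan} to $\mc{A}\n\mc{Z}_1\m\mc{A}\n\mc{N}=\mc{A}\n\mc{Z}_2\m\mc{A}\n\mc{N}$ merely returns $\mc{A}\n\mc{Z}_1\m\mc{A}=\mc{A}\n\mc{Z}_2\m\mc{A}$, which you already knew (both sides are $\mc{A}$) --- the step is circular. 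What is missing is precisely the step where the symmetry equations enter: one must first prove $\mc{Z}_1\m\mc{A}\n\mc{N}=\mc{Z}_2\m\mc{A}\n\mc{N}$ and $\mc{M}\m\mc{A}\n\mc{Z}_1=\mc{M}\m\mc{A}\n\mc{Z}_2$ by a chain using $(1)$ for one inverse, $(4)$ (resp.\ $(3)$) for both, part (a) (resp.\ (b)), and the transpose of $(1)$; only then do the range hypotheses, via tensors $\mc{U},\mc{V}$ with $\mc{A}\n\mc{N}\n\mc{U}=\mc{A}$ and $\mc{V}\m\mc{M}\m\mc{A}=\mc{A}$, give $\mc{Z}_1\m\mc{A}=\mc{Z}_2\m\mc{A}$ and $\mc{A}\n\mc{Z}_1=\mc{A}\n\mc{Z}_2$, after which your final chain through equation $(2)$ does finish the argument. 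Without that middle step the uniqueness claim is not established.
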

 \begin{proof}
 Let $\mc{X}$ be a weighted Moore-Penrose inverse of $\mc{A}.$ Now 
 \begin{eqnarray*}
 \mc{A}\n\mc{N}^T\n\mc{A}^T &=&\mc{A}\n {\mc{N}^T\n\mc{A}^T\m\mc{X}^T}\n\mc{A}^T
 =\mc{A}\n {(\mc{X}\m\mc{A}\n\mc{N})^T}\n\mc{A}^T\\
 &=& {\mc{A}\n\mc{X}\m\mc{A}}\n\mc{N}\n\mc{A}^T
 = \mc{A}\n\mc{N}\n\mc{A}^T.
 \end{eqnarray*}
  This completes the proof of part $(a).$ Using the similar lines of part $(a)$ and relation $(3)$ of Definition \ref{wmpi}, we can  prove part $(b).$ Next we will claim the uniqueness of $A^\dagger_{\mc{M},\mc{N}}.$ \\
 Suppose there exists two weighted Moore-Penrose inverses (say $\mc{X}_1$ and $\mc{X}_2$)  for $\mc{A}.$ Then  
 \begin{eqnarray*}
 \mc{X}_1\m\mc{A}\n\mc{N} &=&  \mc{X}_1\n\mc{A}\n {\mc{X}_2\m\mc{A}\n\mc{N}}
   = \mc{X}_1\m {\mc{A}\n\mc{N}^T\n\mc{A}^T}\m\mc{X}_2^T\\
   &= & {\mc{X}_1\m\mc{A}\n\mc{N}}\n\mc{A}^T\m\mc{X}_2^T 
   = \mc{N}^T\n {\mc{A}^T\m\mc{X}_1^T\n\mc{A}^T}\m\mc{X}_2^T\\
  & =& \mc{N}^T\n\mc{A}^T\m\mc{X}_2^T
   = \mc{X}_2\m\mc{A}\n\mc{N}.
   \end{eqnarray*}
   Since $\mathfrak{R}(\mc{A})=\mathfrak{R}(\mc{A}\n\mc{N}).$ Which implies there exists $\mc{U}$ such that $\mc{A}\n\mc{N}\n\mc{U} = \mc{A}.$ Thus $\mc{X}_1\m\mc{A}\n\mc{N}\n\mc{U} = \mc{X}_2\m\mc{A}\n\mc{N}\n\mc{U}.$ Hence $\mc{X}_1\m\mc{A} =\mc{X}_2\m\mc{A}$. Therefore
   \begin{equation}\label{eq4.51}
      \mc{X}_1=\mc{X}_1\m\mc{A}\n\mc{X}_1=\mc{X}_2\m\mc{A}\n\mc{X}_1.
   \end{equation}
 Now by using Eq. (\ref{eq4.51}), we get
 \begin{eqnarray*}
   \mc{M}\m\mc{A}\n {\mc{X}_1} &=&   {\mc{M}\m\mc{A}\n\mc{X}_2}\m\mc{A}\n\mc{X}_1
   = \mc{X}_2^T\n {\mc{A}^T\m\mc{M}^T\m\mc{A}}\n\mc{X}_1\\
   &=& \mc{X}_2\n\mc{A}^T\m {\mc{M}\m\mc{A}\n\mc{X}_1}
   = \mc{X}_2^T\n {\mc{A}^T\m\mc{X}_1^T\n\mc{A}^T}\m\mc{M}^T\\
   &=& \mc{X}_2^T\n\mc{A}^T\n\mc{M}^T
  = \mc{M}\m\mc{A}\n\mc{X}_2.
   \end{eqnarray*}
  Again as $\mathfrak{R}(\mc{A}^T)=\mathfrak{R}(\mc{A}^T\m\mc{M}^T).$ This implies there exists $\mc{V}^T$ such that $\mc{A}^T\m\mc{M}^T\m\mc{V}^T = \mc{A}^T.$ It leads $\mc{V}\m\mc{M}\m\mc{A}=\mc{A}.$ Thus $ \mc{V}\m\mc{M}\m\mc{A}\n\mc{X}_1 = \mc{V}\m\mc{M}\m\mc{A}\n\mc{X}_2.$  Hence $\mc{A}\n\mc{X}_1 =\mc{A}\n\mc{X}_2$. Therefore
   \begin{equation}\label{eq4.52}
      \mc{X}_2=\mc{X}_2\m\mc{A}\n\mc{X}_2=\mc{X}_2\m\mc{A}\n\mc{X}_1.
   \end{equation}
  Combining Eq. (\ref{eq4.51}) and (\ref{eq4.52}), we obtain $\mc{X}_1=\mc{X}_2$ and hence the proof is complete. 
  \end{proof}
 The existence of weighted Moore-Penrose inverse is not trivial like other generalized inverses. The next theorem discusses the existence of weighted Moore-Penrose inverse. 
 
 \begin{theorem}\label{ewmpi}
 Let $\mc{A}\in\mathbb{R}^{I_1\times \cdots\times I_M\times  J_1\times \cdots\times J_N},~~\mc{M}\in\mathbb{R}^{I_1\times \cdots \times I_M\times I_1\times \cdots\times I_M},$\\ $\mc{N}\in\mathbb{R}^{J_1 \times \cdots\times  J_N\times  J_1 \times \cdots\times J_N}$ be three Boolean tensors with $\mathfrak{R}(\mc{A})=\mathfrak{R}(\mc{A}\n\mc{N})$ and $\mathfrak{R}(\mc{A}^T)=\mathfrak{R}(\mc{A}^T\m\mc{M}^T).$ If
 $ \mc{M}\geq\mc{I}$ and $\mc{N}\geq\mc{I},$ then $\mc{A}_{\mc{M},\mc{N}}^\dagger$ exists if and only if any one of the following conditions holds:
 \begin{enumerate}
 \item[(a)] $\mc{A}\n\mc{N}\n\mc{A}^T\m\mc{M}\m\mc{A} = \mc{A}.$
 \item[(b)] $\mc{A}\n\mc{N}^T\n\mc{A}^T\m\mc{M}\m\mc{A} = \mc{A}.$
 \item[(c)] $\mc{A}\n\mc{N}\n\mc{A}^T\m\mc{M}^T\m\mc{A} = \mc{A}.$
 \item[(d)] $\mc{A}\n\mc{N}^T\n\mc{A}^T\m\mc{M}^T\m\mc{A} = \mc{A}.$
 \end{enumerate}
 In particular, $\mc{A}_{\mc{M},\mc{N}}^\dagger = \mc{N}^T\n\mc{A}^T\m\mc{M}^T.$
 \end{theorem}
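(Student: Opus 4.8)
The plan is to treat the four equations (a)--(d) simultaneously and to show that each of them is equivalent to the existence of $\mc{A}^\dagger_{\mc{M},\mc{N}}$, the formula for the inverse falling out along the way.

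\emph{The ``if'' direction.} Suppose one of (a)--(d) holds, say (a): $\mc{A}\n\mc{N}\n\mc{A}^T\m\mc{M}\m\mc{A}=\mc{A}$. Since $\mc{M}\geq\mc{I}$ and $\mc{N}\geq\mc{I}$ (hence also $\mc{M}^T\geq\mc{I}$, $\mc{N}^T\geq\mc{I}$, as $\mc{I}^T=\mc{I}$), Boolean monotonicity of the Einstein product together with Lemma \ref{lemma1} gives the chain $\mc{A}\leq\mc{A}\n\mc{A}^T\m\mc{A}\leq\mc{A}\n\mc{N}\n\mc{A}^T\m\mc{A}\leq\mc{A}\n\mc{N}\n\mc{A}^T\m\mc{M}\m\mc{A}=\mc{A}$, which forces every term to equal $\mc{A}$; in particular $\mc{A}\n\mc{A}^T\m\mc{A}=\mc{A}$, $\mc{A}\n\mc{N}\n\mc{A}^T\m\mc{A}=\mc{A}$ and $\mc{A}\n\mc{A}^T\m\mc{M}\m\mc{A}=\mc{A}$. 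Transposing these three identities and substituting $\mc{A}=\mc{A}\n\mc{A}^T\m\mc{A}$ where needed yields the ``collapse'' identities $\mc{A}^T\m\mc{M}\m\mc{A}=\mc{A}^T\m\mc{M}^T\m\mc{A}=\mc{A}^T\m\mc{A}$ and $\mc{A}\n\mc{N}\n\mc{A}^T=\mc{A}\n\mc{N}^T\n\mc{A}^T=\mc{A}\n\mc{A}^T$. Running the same chain from (b), (c) or (d) (using whichever of $\mc{M},\mc{M}^T,\mc{N},\mc{N}^T$ is appropriate, all of which dominate $\mc{I}$) produces exactly the same collapse identities. With these in hand a direct verification shows that $\mc{Z}:=\mc{N}^T\n\mc{A}^T\m\mc{M}^T$ satisfies all four equations of Definition \ref{wmpi}; for instance $\mc{A}\n\mc{Z}\m\mc{A}=(\mc{A}\n\mc{N}^T\n\mc{A}^T)\m\mc{M}^T\m\mc{A}=\mc{A}\n(\mc{A}^T\m\mc{M}^T\m\mc{A})=\mc{A}\n\mc{A}^T\m\mc{A}=\mc{A}$, while $\mc{M}\m\mc{A}\n\mc{Z}=\mc{M}\m\mc{A}\n\mc{A}^T\m\mc{M}^T$ and $\mc{Z}\m\mc{A}\n\mc{N}=\mc{N}^T\n\mc{A}^T\m\mc{A}\n\mc{N}$ are visibly symmetric, and condition (2) reduces similarly. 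Thus $\mc{A}^\dagger_{\mc{M},\mc{N}}$ exists; by Theorem \ref{uwmpi}(c) it is unique and equals $\mc{Z}$, which simultaneously establishes the closing formula $\mc{A}^\dagger_{\mc{M},\mc{N}}=\mc{N}^T\n\mc{A}^T\m\mc{M}^T$.

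\emph{The ``only if'' direction.} Conversely, assume $\mc{X}=\mc{A}^\dagger_{\mc{M},\mc{N}}$ exists; it suffices to prove (a), since Theorem \ref{uwmpi}(a),(b) then make the four left-hand sides of (a)--(d) coincide. The inequality $\mc{A}\leq\mc{A}\n\mc{N}\n\mc{A}^T\m\mc{M}\m\mc{A}$ is again immediate from $\mc{M},\mc{N}\geq\mc{I}$ and Lemma \ref{lemma1}, so I only need $\mc{A}\n\mc{N}\n\mc{A}^T\m\mc{M}\m\mc{A}\leq\mc{A}$, a weighted analogue of Lemma \ref{lemma2}, proved by the same finiteness/descent mechanism. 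From $\mc{A}\n\mc{X}\m\mc{A}=\mc{A}$, the symmetry of $\mc{M}\m\mc{A}\n\mc{X}$ and of $\mc{X}\m\mc{A}\n\mc{N}$, and Theorem \ref{uwmpi}(b), one extracts the ``stripping'' identities $\mc{X}^T\n\mc{A}^T\m\mc{M}\m\mc{A}=\mc{M}\m\mc{A}$ and $\mc{X}\m\mc{A}\n\mc{N}\n\mc{A}^T=\mc{N}^T\n\mc{A}^T$; the range hypotheses give, via Lemma \ref{range-stan}, tensors $\mc{U},\mc{V}$ with $\mc{A}=\mc{V}\m\mc{M}\m\mc{A}$ and $\mc{A}^T=\mc{U}^T\n\mc{N}^T\n\mc{A}^T$. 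Put $\mc{B}=\mc{A}^T\m\mc{M}\m\mc{A}\n\mc{N}\in\mathbb{R}^{J_1\times\cdots\times J_N\times J_1\times\cdots\times J_N}$; finiteness of the set of Boolean tensors of this order gives $\mc{B}^s=\mc{B}^{s+t}$ for some minimal $s\geq1$, $t\geq1$. If $s\geq2$, successively pre-multiplying the equation $\mc{B}\n\mc{B}^{s-1}=\mc{B}\n\mc{B}^{s+t-1}$ by $\mc{X}^T$, then $\mc{V}$, then (after rewriting one factor $\mc{B}$) by $\mc{X}$, then $\mc{U}^T$, and invoking the two stripping identities, peels the equation down to $\mc{B}^{s-1}=\mc{B}^{s+t-1}$, contradicting minimality; hence $s=1$, i.e.\ $\mc{B}=\mc{B}^{t+1}$. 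Peeling this one the same way yields $\mc{A}=(\mc{A}\n\mc{N}\n\mc{A}^T\m\mc{M})^t\m\mc{A}$, and then the monotone chain $\mc{A}\n\mc{N}\n\mc{A}^T\m\mc{M}\m\mc{A}\leq(\mc{A}\n\mc{N}\n\mc{A}^T\m\mc{M})^2\m\mc{A}\leq\cdots\leq(\mc{A}\n\mc{N}\n\mc{A}^T\m\mc{M})^t\m\mc{A}=\mc{A}$ (each step by the already-proved inequality $\mc{A}\leq\mc{A}\n\mc{N}\n\mc{A}^T\m\mc{M}\m\mc{A}$ and monotonicity) gives $\mc{A}\n\mc{N}\n\mc{A}^T\m\mc{M}\m\mc{A}\leq\mc{A}$, hence (a).

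The main obstacle is precisely this descent in the converse direction. In Lemma \ref{lemma2} the cancellations rely on the symmetry of $\mc{A}\n\mc{X}$ and $\mc{X}\m\mc{A}$, whereas here the symmetric objects are the \emph{weighted} products $\mc{M}\m\mc{A}\n\mc{X}$ and $\mc{X}\m\mc{A}\n\mc{N}$; one must first manufacture the correct stripping identities (which is where Theorem \ref{uwmpi}(b) and the two range hypotheses are genuinely needed) and then order the peelings so that $\mc{M}$, $\mc{N}$ and their transposes are removed consistently. Keeping this bookkeeping straight, and in particular making sure no step of the descent tacitly uses the very identity (a) we are after, is the delicate point; the rest is routine.
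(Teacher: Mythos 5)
Your proposal is correct and takes essentially the same approach as the paper: sufficiency of each of (a)--(d) via the squeeze $\mc{A}\leq\mc{A}\n\mc{A}^T\m\mc{A}\leq\cdots$ coming from $\mc{M},\mc{N}\geq\mc{I}$ and Lemma \ref{lemma1}, followed by direct verification that $\mc{N}^T\n\mc{A}^T\m\mc{M}^T$ satisfies Definition \ref{wmpi}, and necessity via the same finiteness/minimal-exponent descent mechanism as Lemma \ref{lemma2}, using the range hypotheses and Theorem \ref{uwmpi}. The only differences are organizational (you iterate $\mc{B}=\mc{A}^T\m\mc{M}\m\mc{A}\n\mc{N}$ and obtain condition (a) first, whereas the paper iterates $\mc{A}\n\mc{N}\n\mc{A}^T\m\mc{M}^T$ and reaches (b) through a longer telescoping computation), not a different method.
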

  \begin{proof}
Assume $\mc{A}_{\mc{M},\mc{N}}^\dagger$ exists and let $\mc{X}=\mc{A}_{\mc{M},\mc{N}}^\dagger$. Let $\mc{B}= \mc{A}^T\m\mc{A}$. Since for every Boolean tensor, there are finitely many  Boolean tensors of same order, so there must exist positive integers $s,t\in\mathbb{N}$ such that 
\begin{equation}\label{eq4.6}
    (\mc{A}\n\mc{N}\n\mc{A}^T\m\mc{M}^T)^s =  (\mc{A}\n\mc{N}\n\mc{A}^T\m\mc{M}^T)^{s+t}.
\end{equation}
Without loss of generality, we can assume that $s$ is the smallest positive integer for which Eq. (\ref{eq4.6}) holds. Now we will claim $s=1.$ Suppose on contradiction, assume $s>1.$ Now using Eq. (\ref{eq4.6}), and properties of weighted Mooore-Penrose inverse, we get 
\begin{eqnarray}\label{eq4.7}
\nonumber
 {\mc{X}\m \mc{A}\n\mc{N}}\n\mc{A}^T\m\mc{M}^T\m&&\hspace*{-0.7cm}(\mc{A}\n\mc{N}\n\mc{A}^T\m\mc{M}^T)^{s-1} .\\
\nonumber
&&\hspace*{-3.5cm}=   {\mc{X}\m\mc{A}\n\mc{N}}\n\mc{A}^T\m\mc{M}^T\m(\mc{A}\n\mc{N}\n\mc{A}^T\m\mc{M}^T)^{s-1+t}\\
\nonumber\textnormal{This yield~~} \mc{N}^T\n\mc{A}^T\m\mc{X}^T\n\mc{A}^T\m\mc{M}^T&&\hspace*{-0.7cm}(\mc{A}\n\mc{N}\n\mc{A}^T\m\mc{M}^T)^{s-1} \\
&&\hspace*{-4.5cm}=\mc{N}^T\n\mc{A}^T\n\mc{X}^T\m\mc{A}^T\m\mc{M}^T(\mc{A}\n\mc{N}\n\mc{A}^T\m\mc{M}^T)^{s-1+t}.
\end{eqnarray}
Since $\mathfrak{R}(\mc{A})=\mathfrak{R}(\mc{A}\n\mc{N}),$ which implies there exists a  tensor $\mc{U}$ such that $\mc{A}\n\mc{N}\n\mc{U}=\mc{A}.$ Now premultiplying $\mc{U}^T$ to Eq. (\ref{eq4.7}) and using the properties $\mc{U}^T\n\mc{N}^T\n\mc{A}^T=\mc{A}^T$ and $\mc{A}^T \m\mc{X}^T\n\mc{A}^T=\mc{A}^T,$ we get
\begin{equation}\label{eq4.71}
    \mc{A}^T\m\mc{M}^T\m(\mc{A}\n\mc{N}\n\mc{A}^T\m\mc{M}^T)^{s-1} = \mc{A}^T\m\mc{M}^T\m(\mc{A}\n\mc{N}\n\mc{A}^T\m\mc{M}^T)^{s-1+t}.
\end{equation}
 Again, premultiplying $\mc{X}^T $ to Eq. (\ref{eq4.71}) and using the symmetricity of  $\mc{M}\m\mc{A}\n\mc{X}$, we get
$\mc{M}\m {\mc{A}\n\mc{X}\m(\mc{A}}\n\mc{N}\n\mc{A}^T\m\mc{M}^T)^{s-1}= \mc{M}\m {\mc{A}\n\mc{X}\m(\mc{A}}\n\mc{N}\n\mc{A}^T\m\mc{M}^T)^{s-1+t}.$ This gives 
\begin{equation}\label{eq4.8}
    \mc{M}\m(\mc{A}\n\mc{N}\n\mc{A}^T\m\mc{M}^T)^{s-1} = \mc{M}\m(\mc{A}\n\mc{N}\n\mc{A}^T\m\mc{M}^T)^{s-1+t}.
\end{equation}       
 Since $\mathfrak{R}(\mc{A}^T)=\mathfrak{R}(\mc{A}^T\n\mc{M}^T),$ which implies there exists a  tensor $\mc{Z}$ such that $\mc{Z}\m\mc{M}\m\mc{A}=\mc{A}.$ Premultiplying $\mc{Z}$ to Eq. (\ref{eq4.8}) yields 
  $$(\mc{A}\n\mc{N}\n\mc{A}^T\m\mc{M}^T)^{s-1} = (\mc{A}\n\mc{N}\n\mc{A}^T\m\mc{M}^T)^{s-1+t}.$$
  and contradicts the minimality of $s.$ Therefore 
  \begin{equation}\label{eq4.9}
      \mc{A}\n\mc{N}\n\mc{A}^T\m\mc{M}^T = (\mc{A}\n\mc{N}\n\mc{A}^T\m\mc{M}^T)^{t+1},~\mbox{for some }~t\in\mathbb{N}.
  \end{equation}
 Premultiplying Eq. (\ref{eq4.9}) by $\mc{X},$ and using $(\mc{X}\m\mc{A}\n\mc{N})^T=\mc{X}\m\mc{A}\n\mc{N},$ we obtain 
$$
\mc{N}^T\n {\mc{A}^T\m\mc{X}^T\n\mc{A}^T}\m\mc{M}^T = \mc{N}^T\m {\mc{A}^T\n\mc{X}^T\n\mc{A}^T}\m\mc{M}^T\m(\mc{A}\n\mc{N}\n\mc{A}^T\m\mc{M}^T)^t.$$
Since $ \mc{A}^T\m\mc{X}^T\n\mc{A}^T=\mc{A}^T,$ we get
\begin{equation}\label{eqn121}
   \mc{N}^T\n\mc{A}^T\m\mc{M}^T = \mc{N}^T\n\mc{A}^T\m\mc{M}^T\m(\mc{A}\n\mc{N}\n\mc{A}^T\m\mc{M}^T)^t. 
\end{equation}
Premultiplying Eq. (\ref{eqn121}) by a tensor $\mc{U}^T$ and using $\mathfrak{R}(\mc{A})=\mathfrak{R}(\mc{A}\n\mc{N}),$ we again obtain $ \mc{A}^T\m\mc{M}^T = \mc{A}^T\m\mc{M}^T\m(\mc{A}\n\mc{N}\n\mc{A}^T\m\mc{M}^T)^t.$ Postmultiplying $\mc{Z}^T$ and applying $\mathfrak{R}(\mc{A}^T)=\mathfrak{R}(\mc{A}^T\m\mc{M}^T),$  we have  
$$
 \mc{A}^T = \mc{A}^T\m\mc{M}^T\m(\mc{A}\n\mc{N}\n\mc{A}^T\m\mc{M}^T)^{t-1}\m \mc{A}\n\mc{N}\n\mc{A}^T.$$

Now 
\begin{eqnarray*}
 \mc{A}^T &=& \mc{A}^T\m {\mc{M}^T\m(\mc{A}\n\mc{N}\n\mc{A}^T}\m\mc{M}^T)^{t-1}\m\mc{A}\n\mc{N}\n\mc{A}^T\\
 &=&\mc{A}^T\m(\mc{M}^T\m\mc{A}\n\mc{N}\n\mc{A}^T)\m {\mc{M}^T\m(\mc{A}\n\mc{N}\n\mc{A}^T}\m\mc{M}^T)^{t-2}\n\mc{A}\n\mc{N}\n\mc{A}^T\\
 &=&\mc{A}^T\m(\mc{M}^T\m\mc{A}\n\mc{N}\n\mc{A}^T)^2\m {\mc{M}^T\m(\mc{A}\n\mc{N}\n\mc{A}^T}\m\mc{M}^T)^{t-3}\n\mc{A}\n\mc{N}\n\mc{A}^T\\
 &=&\cdots~~~~~~~~~~~~~~\cdots~~~~~~~~~~~~~\cdots\\
 &=&\mc{A}^T\m(\mc{M}^T\m\mc{A}\n\mc{N}\n\mc{A}^T)^{t-2}\m {\mc{M}^T\m(\mc{A}\n\mc{N}\n\mc{A}^T}\m {\mc{M}^T)\n\mc{A}\n\mc{N}\n\mc{A}^T}\\
 &=&\mc{A}^T\m(\mc{M}^T\m\mc{A}\n\mc{N}\n\mc{A}^T)^{t}=\mc{A}^T\m\left[(\mc{A}\n\mc{N}^T\n\mc{A}^T\m\mc{M})^{t}\right]^T.
\end{eqnarray*}
Thus 
\begin{equation}\label{eq4.10}
    \mc{A}=(\mc{A}\n\mc{N}^T\n\mc{A}^T\m\mc{M})^{t}\m\mc{A}.
\end{equation}
As $\mc{M}\geq\mc{I},\mc{N}\geq\mc{I} $, so by Lemma \ref{lemma1}
\begin{equation}\label{eq4.11}
 \mc{A}\n\mc{N}^T\n\mc{A}^T\m\mc{M}\m\mc{A}\geq \mc{A}\n\mc{A}^T\m\mc{A}\geq \mc{A}.
\end{equation}
Postmultiplying $\mc{N}^T\n\mc{A}^T\m\mc{M}\m\mc{A}$, we obtain 
 \begin{equation}\label{eq4.12}
   \mc{A}\n\mc{N}^T\n\mc{A}^T\m\mc{M}\leq(\mc{A}\n\mc{N}^T\n\mc{A}^T\m\mc{M})^2\m\mc{A}.
 \end{equation}
 Combining Eqs.(\ref{eq4.10}), (\ref{eq4.11}) and (\ref{eq4.12}), we have

\begin{eqnarray*}
\mc{A}&\leq&\mc{A}\n\mc{N}^T \n \mc{A}^T\m\mc{M}\m\mc{A}
\leq  (\mc{A}\n\mc{N}^T\n\mc{A}^T*_M\mc{M})^2\m\mc{A} \\
&\leq & (\mc{A}\n\mc{N}^T\n\mc{A}^T\m\mc{M})^3\m\mc{A}\leq \cdots\leq (\mc{A}\n\mc{N}^T\n\mc{A}^T\m\mc{M})^t\m\mc{A}=\mc{A}.
\end{eqnarray*}
Therefore 
\begin{equation}\label{eq4.13}
     \mc{A} = \mc{A}\n\mc{N}^T\n\mc{A}^T\m\mc{M}\m\mc{A},
\end{equation}
and hence completes the proof of the condition $(b).$ By using Theorem \ref{uwmpi}, the other conditions are holds since
\begin{eqnarray}\nonumber
   \mc{A} &=&  {\mc{A}\n\mc{N}^T\n\mc{A}^T}\m\mc{M}\m\mc{A} = \mc{A}\n\mc{N}\n {\mc{A}^T\m\mc{M}\m\mc{A}}\\\label{eq199} &=& {\mc{A}\n\mc{N}\n\mc{A}^T}\m\mc{M}^T\m\mc{A} = \mc{A}\n\mc{N}^T\n\mc{A}^T\m\mc{M}^T\m\mc{A}.
\end{eqnarray}
Further, we will claim not only the four conditions holds but also $\mc{A}^\dagger_{\mc{M},\mc{N}}=\mc{N}^T\n\mc{A}^T\m\mc{M}^T.$ Let $\mc{X}=\mc{A}_{\mc{M},\mc{N}} ^\dagger.$ From Eq. (\ref{eq199}), $\mc{A}=\mc{A}\n\mc{X}\m\mc{A}$ and
\begin{equation*}
    \mc{X}\m\mc{A}\n\mc{X}=\mc{N}^T\n {\mc{A}^T\m\mc{M}^T\m\mc{A}\n\mc{N}^T\n\mc{A}^T}\m\mc{M}^T=\mc{N}^T\n\mc{A}^T\m\mc{M}^T=\mc{X}.
\end{equation*}
Using Theorem \ref{uwmpi}, we show 
\begin{eqnarray*}
\mc{M}\m\mc{A}\n\mc{X} &=&\mc{M}\m {\mc{A}\n\mc{N}^T\n\mc{A}^T}\m\mc{M}^T    =\mc{M}\m\mc{A}\n\mc{N}\n\mc{A}^T\m\mc{M}^T\\
     &=&(\mc{M}\m\mc{A}\n\mc{N}^T\n\mc{A}^T\m\mc{M}^T)^T
=(\mc{M}\m\mc{A}\n\mc{X})^T.
\end{eqnarray*}
Therefore, $\mc{M}\m\mc{A}\n\mc{X}$ is symmetric. Similarly, we can show $\mc{X}\m\mc{A}\n\mc{N}$ is symmetric. So $\mc{A}_{\mc{M},\mc{N}} ^\dagger=\mc{N}^T\n\mc{A}^T\m\mc{M}^T $. Next we will show the converse part. Let $\mc{A}\n\mc{N}\n\mc{A}^T\m\mc{M}\m\mc{A} = \mc{A}.$ Since $\mc{M}\geq \mc{I}$ and $\mc{N}\geq\mc{I}$, so by Lemma \ref{lemma1}, \begin{equation*}
    \mc{A}\leq\mc{A}\n\mc{A}^T\m\mc{A}\leq\mc{A}\n\mc{N}\n\mc{A}^T\m\mc{A}\leq\mc{A}\n\mc{N}\n\mc{A}^T\m\mc{M}\m\mc{A} = \mc{A}
\end{equation*}
 and hence 
\begin{equation}\label{eq4.14}
  \mc{A} =\mc{A}\n\mc{A}^T\m\mc{A} = \mc{A}\n\mc{N}\n\mc{A}^T\m\mc{A}.
\end{equation}
Using the Eq. (\ref{eq4.14}) and symmetricity of $ \mc{A}\n\mc{A}^T$, we obtain
\begin{equation}\label{eq4.15}
   \mc{A}\n\mc{A}^T = \mc{A}\n\mc{N}\n\mc{A}^T\m\mc{A}\n\mc{A}^T
=\mc{A}\n\mc{N}\n\mc{A}^T
=\mc{A}\n\mc{N}^T\n\mc{A}^T .
\end{equation}
Similar argument yields,
\begin{eqnarray}\label{eq4.16}
\nonumber
    \mc{A}^T\m\mc{A}&=&\mc{A}^T\m\mc{M}^T\m {\mc{A}\n\mc{N}^T\n\mc{A}^T}\m \mc{A}=\mc{A}^T\m\mc{M}^T\m {\mc{A}\n\mc{N}\n\mc{A}^T\m\mc{A}}\\&=&\mc{A}^T\m\mc{M}\n\mc{A} = \mc{A}^T\m\mc{M}^T\m\mc{A}.
\end{eqnarray}
Using Eqs. (\ref{eq4.14})-(\ref{eq4.16}), it can be easily verified that $\mc{X}=\mc{N}^T\n\mc{A}^T \m\mc{M}^T$ is satisfies all four conditions of the weighted Moore-Penrose inverse. Similarly, one can start from other conditions to verify the same. Thus the proof is complete.
  \end{proof}
  \begin{remark}
 The equality  condition  in Theorem \ref{ewmpi} $(a)$ can be replaced by ${\bf{`\geq'}}.$
 \end{remark}

\subsection{Space Decomposition}
Using the theory of Einstein product, we introduce the definition of the space decomposition for Boolean tensors, which generalizes the matrix space decomposition \cite{rao}.

\begin{definition}\label{FRD}
 Let  $\mc{F}\in\mathbb{R}^{I_1 \times \cdots\times I_M\times K_1 \times \cdots\times K_L}$ and $\mc{R}\in\mathbb{R}^{K_1 \times \cdots\times K_L\times J_1 \times \cdots\times J_N}$ be two tensors with
 \vspace{-.5cm}
\begin{eqnarray*}
     &&(a)~\mc{A} = \mc{F}\kl\mc{R};\\
     &&(b)~\mathfrak{R}(\mc{A}) = \mathfrak{R}(\mc{F});\\
     &&(c)~\mathfrak{R}(\mc{A}^T) = \mathfrak{R}(\mc{R}^T),
\end{eqnarray*}
then the tensor $\mc{A}$ is called space decomposable and this decomposition is called a space decomposition  of $\mc{A}$.
\end{definition}

In connection with the fact of the  above Definition \ref{FRD} and Lemma \ref{range-stan}, one can conclude the existence of a generalized inverse, as follows.

\begin{theorem}\label{exisgen}
Let $\mc{A}=\mc{F}\kl\mc{R}$ be a space decomposition of  $\mc{A}\in\mathbb{R}^{I_1 \times \cdots\times I_M\times  J_1 \times \cdots\times J_N},$ where $\mc{F}\in\mathbb{R}^{I_1 \times \cdots\times I_M\times K_1 \times \cdots\times K_L}$ and $\mc{R}\in\mathbb{R}^{K_1 \times \cdots\times K_L\times J_1 \times \cdots\times J_N}$ .   Then $\mc{A}^{(1)}$ exists.
\end{theorem}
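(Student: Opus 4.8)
The plan is to produce an explicit generalized inverse of $\mc{A}$ directly from the two range identities supplied by the space decomposition. First I would invoke Definition~\ref{FRD}(b): since $\mathfrak{R}(\mc{F})\subseteq\mathfrak{R}(\mc{A})$, Lemma~\ref{range-stan} yields a Boolean tensor $\mc{U}\in\mathbb{R}^{J_1\times\cdots\times J_N\times K_1\times\cdots\times K_L}$ with $\mc{F}=\mc{A}\n\mc{U}$. Next I would invoke Definition~\ref{FRD}(c): since $\mathfrak{R}(\mc{R}^T)\subseteq\mathfrak{R}(\mc{A}^T)$, the same lemma applied to $\mc{A}^T$ and $\mc{R}^T$ yields a Boolean tensor $\mc{V}\in\mathbb{R}^{I_1\times\cdots\times I_M\times K_1\times\cdots\times K_L}$ with $\mc{R}^T=\mc{A}^T\m\mc{V}$, and transposing this identity gives $\mc{R}=\mc{V}^T\m\mc{A}$.

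With these two one-sided factorizations in hand, I would set $\mc{X}=\mc{U}\kl\mc{V}^T$. A check of the orders shows $\mc{X}\in\mathbb{R}^{J_1\times\cdots\times J_N\times I_1\times\cdots\times I_M}$, exactly the type required of a generalized inverse of $\mc{A}$. I would then verify equation~$(1)$ of Definition~\ref{defgi} by reassociating the Einstein products:
\begin{equation*}
\mc{A}\n\mc{X}\m\mc{A}=\mc{A}\n(\mc{U}\kl\mc{V}^T)\m\mc{A}=\big((\mc{A}\n\mc{U})\kl\mc{V}^T\big)\m\mc{A}=(\mc{F}\kl\mc{V}^T)\m\mc{A}=\mc{F}\kl(\mc{V}^T\m\mc{A})=\mc{F}\kl\mc{R}=\mc{A},
\end{equation*}
where I have substituted in turn $\mc{F}=\mc{A}\n\mc{U}$, $\mc{R}=\mc{V}^T\m\mc{A}$, and $\mc{A}=\mc{F}\kl\mc{R}$. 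Hence $\mc{X}$ satisfies $(1)$, so $\mc{A}^{(1)}$ exists.

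The substance of the argument is carried entirely by Lemma~\ref{range-stan}; everything else is bookkeeping, and that is the only place I expect friction. One has to make sure the three products $\n$, $\kl$ and $\m$ contract over pairwise-disjoint blocks of indices — the $J$-block, the $K$-block and the $I$-block, respectively — so that the reassociation steps above are legitimate (for the Boolean semiring this is just the usual interchange of finite sums of products), and one has to carry the orders $I_1\times\cdots\times I_M$, $J_1\times\cdots\times J_N$ and $K_1\times\cdots\times K_L$ carefully through the transpose in the second step, to confirm both that $\mc{V}^T\m\mc{A}$ reproduces $\mc{R}$ and that $\mc{U}\kl\mc{V}^T$ has the asserted type. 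No idea beyond the two applications of Lemma~\ref{range-stan} is needed.
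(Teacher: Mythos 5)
Your proposal is correct and follows exactly the route the paper intends: the paper gives no written proof of Theorem~\ref{exisgen}, only the remark that it follows from Definition~\ref{FRD} together with Lemma~\ref{range-stan}, and your construction $\mc{F}=\mc{A}\n\mc{U}$, $\mc{R}=\mc{V}^T\m\mc{A}$, $\mc{X}=\mc{U}\kl\mc{V}^T$ with $\mc{A}\n\mc{X}\m\mc{A}=\mc{F}\kl\mc{R}=\mc{A}$ is precisely that argument, carried out with the correct orders and index bookkeeping.
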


We now present one of our essential result which represents not only the existence of reflexive generalized inverse but also other inverses through this decomposition.

\begin{theorem}\label{the336}
Let $\mc{X}$ be a generalized inverse of the Boolean tensor $\mc{A}.$ If $\mc{A}=\mc{F}\kl\mc{R}$ is a space decomposition of $\mc{A},$ where $\mc{F}\in\mathbb{R}^{I_1 \times \cdots\times I_M\times K_1 \times \cdots\times K_L}$ and $\mc{R}\in\mathbb{R}^{K_1 \times \cdots\times K_L\times J_1 \times \cdots\times J_N}.$  Then the following are holds:
\begin{enumerate}
\label{eqvspace}
    \item[(a)] $\mc{F}^{(1)}$ and $\mc{R}^{(1)}$ exists.
    \item[(b)] $\mc{F}^{(1)}\m\mc{F}=\mc{R}\n\mc{R}^{(1)}.$
    \item[(c)] $\mc{F}^{(1)}\m\mc{A}=\mc{R}$ and $\mc{A}\n\mc{R}^{(1)}=\mc{F}.$
    \item[(d)] $\mc{R}^{(1)}\m\mc{F}^{(1)}$ is a generalized inverse of $\mc{A}.$
    \item[(e)] $\mc{R}\n\mc{X}$ is a reflexive inverse of $\mc{F}$ and $\mc{X}\m\mc{F}$ is a reflexive inverse of $\mc{R}.$ 
\end{enumerate}
\end{theorem}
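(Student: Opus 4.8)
The plan is to build the required one-inverses directly from the given generalized inverse $\mc{X}$ of $\mc{A}$, taking precisely the two tensors already named in part (e): set $\mc{F}^{(1)}:=\mc{R}\n\mc{X}$ and $\mc{R}^{(1)}:=\mc{X}\m\mc{F}$, and then verify each item by collapsing $\mc{F}\kl\mc{R}$ back to $\mc{A}$ wherever it occurs. First I would translate the two range hypotheses through Lemma \ref{range-stan}: from $\mathfrak{R}(\mc{F})\subseteq\mathfrak{R}(\mc{A})$ one obtains a Boolean tensor $\mc{U}$ with $\mc{F}=\mc{A}\n\mc{U}$, and from $\mathfrak{R}(\mc{R}^T)\subseteq\mathfrak{R}(\mc{A}^T)$ one obtains $\mc{Y}$ with $\mc{R}^T=\mc{A}^T\m\mc{Y}$; transposing the second and writing $\mc{G}=\mc{Y}^T$ gives $\mc{R}=\mc{G}\m\mc{A}$.

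The backbone of the whole argument is a pair of reduction identities, namely $\mc{A}\n\mc{X}\m\mc{F}=\mc{F}$ and $\mc{R}\n\mc{X}\m\mc{A}=\mc{R}$. The first follows by substituting $\mc{F}=\mc{A}\n\mc{U}$ and invoking $\mc{A}\n\mc{X}\m\mc{A}=\mc{A}$ together with associativity of the Einstein product, i.e. $\mc{A}\n\mc{X}\m\mc{F}=(\mc{A}\n\mc{X}\m\mc{A})\n\mc{U}=\mc{A}\n\mc{U}=\mc{F}$; the second follows by the mirror computation from $\mc{R}=\mc{G}\m\mc{A}$. Given these, part (c) is immediate, since $\mc{F}^{(1)}\m\mc{A}=\mc{R}\n\mc{X}\m\mc{A}=\mc{R}$ and $\mc{A}\n\mc{R}^{(1)}=\mc{A}\n\mc{X}\m\mc{F}=\mc{F}$, and part (b) is then pure associativity, $\mc{F}^{(1)}\m\mc{F}=(\mc{R}\n\mc{X})\m\mc{F}=\mc{R}\n(\mc{X}\m\mc{F})=\mc{R}\n\mc{R}^{(1)}$.

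For parts (a) and (e) I would check the defining equations of a $\{1,2\}$-inverse. For $\mc{F}^{(1)}=\mc{R}\n\mc{X}$ against $\mc{F}$: equation $(1)$ is $\mc{F}\kl(\mc{R}\n\mc{X})\m\mc{F}=\mc{A}\n\mc{X}\m\mc{F}=\mc{F}$ by the first reduction identity, and equation $(2)$ is $(\mc{R}\n\mc{X})\m\mc{F}\kl(\mc{R}\n\mc{X})=(\mc{R}\n\mc{X}\m\mc{A})\n\mc{X}=\mc{R}\n\mc{X}$ after regrouping and applying the second identity. The symmetric computation shows $\mc{R}^{(1)}=\mc{X}\m\mc{F}$ is a reflexive inverse of $\mc{R}$, using $\mc{R}\n\mc{X}\m\mc{F}\kl\mc{R}=\mc{R}\n\mc{X}\m\mc{A}=\mc{R}$ for $(1)$ and $\mc{X}\m\mc{F}\kl\mc{R}\n\mc{X}\m\mc{F}=\mc{X}\m(\mc{A}\n\mc{X}\m\mc{F})=\mc{X}\m\mc{F}$ for $(2)$. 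This establishes (e), and (a) drops out as a by-product. For (d), I would compose $\mc{R}^{(1)}$ with $\mc{F}^{(1)}$ (contracting the $K$-block): $(\mc{X}\m\mc{F})\kl(\mc{R}\n\mc{X})=\mc{X}\m(\mc{F}\kl\mc{R})\n\mc{X}=\mc{X}\m\mc{A}\n\mc{X}$, which by Remark \ref{rm11} is a reflexive generalized inverse of $\mc{A}$, hence in particular an $\mc{A}^{(1)}$.

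Each computation is one or two lines, so I do not anticipate a genuine conceptual obstacle; the one thing to handle with care is index bookkeeping — confirming at every step that $\n$, $\m$ and $\kl$ contract matching index blocks so that each composition is defined and associativity legitimately applies, and transposing correctly when extracting $\mc{G}$ from the hypothesis on $\mc{R}^T$. Getting that bookkeeping exactly right, rather than any deeper difficulty, is the main hurdle.
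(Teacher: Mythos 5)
Your proposal is correct and follows essentially the same route as the paper: both take $\mc{F}^{(1)}=\mc{R}\n\mc{X}$ and $\mc{R}^{(1)}=\mc{X}\m\mc{F}$ and reduce everything to the identities $\mc{A}\n\mc{X}\m\mc{F}=\mc{F}$ and $\mc{R}\n\mc{X}\m\mc{A}=\mc{R}$, the only cosmetic difference being that the paper obtains these by citing its cancellation results (Theorem \ref{ltcan}, Corollary \ref{rtcan}) while you unfold the same argument directly from Lemma \ref{range-stan}. Your verifications of (b)--(e) just make explicit the computations the paper leaves to the reader.
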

\begin{proof}
Since $\mc{X}$ is the generalized inverse of $\mc{A}.$ Then we have  $\mc{A}\n\mc{X}\m\mc{A}=\mc{A},$ which implies\\ 
$\mc{F}\kl\mc{R}\n\mc{X}\m\mc{F}\kl\mc{R}=\mc{F}\kl\mc{R}=\mc{I}\m\mc{F}\kl\mc{R}.$ Further, using Corollary \ref{rtcan}, we get $\mc{F}\kl\mc{R}\n\mc{X}\m\mc{F}=\mc{F}.$ Thus $\mc{R}\n\mc{X}$ is a generalized inverse of $\mc{F}.$ Similarly, one can determine $\mc{X}\m\mc{F}$ is a generalized inverse of $\mc{R}$.  Hence $(a)$ is proved. Now using the result $(a),$ one can prove $(b)$ and $(c).$  To prove $(d)$ we use the fact $(a)$ and obtain. 
\begin{equation*}
    \mc{A}\n\mc{R}^{(1)}\kl\mc{F}^{(1)}\m\mc{A}=  \mc{A}\n\mc{X}\m\mc{F}\kl\mc{R}\n\mc{X}\m\mc{A}=  \mc{A}\n\mc{X}\m\mc{A}\n\mc{X}\m\mc{A}=\mc{A}.
\end{equation*}
Hence $\mc{R}^{(1)}\m\mc{F}^{(1)}$ is a generalized inverse of $\mc{A}.$ 
In a similar manner, one can prove $(e)$ using the fact $\mc{R}\n\mc{X}\m\mc{F}\kl\mc{R}\n\mc{X}
=\mc{R}\n\mc{X}$ and $\mc{X}\m\mc{F}\kl\mc{R}\n\mc{X}\m\mc{F}
=\mc{X}\m\mc{F}.$ This completes the proof.
\end{proof}
In view of the above theorem one can draw a conclusion, as follows. 
\begin{remark}\label{rmk3.43}
 Every generalized inverse of $\mc{A}$ need not of the form $\mc{R}^{(1)}\kl\mc{F}^{(1)}.$ 
 \end{remark}
 We verify the Remark \ref{rmk3.43} with the following example.
\begin{example}\label{example3.45}
Let
$~\mc{A}=(a_{ijkl}) \in \mathbb{R}^{{2\times3}\times{2 \rtimes 3}}$ be a Boolean tensor with
\begin{eqnarray*}
a_{ij11} =
    \begin{pmatrix}
    1 & 1 & 0 \\
    1 & 0 &  0
    \end{pmatrix},
a_{ij12}=a_{ij13} =a_{ij21}=a_{ij22}=a_{ij23}=
    \begin{pmatrix}
     0 & 0 & 0\\
     0 & 0 & 0
    \end{pmatrix}.
\end{eqnarray*}
Consider  $\mc{A}^{(1)}=(x_{ijkl}) \in \mathbb{R}^{{2\times3}\times{2 \rtimes 3}}$ is a generalized inverse of $\mc{A}$ with 
\begin{eqnarray*}
x_{ij11} =
    \begin{pmatrix}
    1 & 0 & 0 \\
    0 & 0 &  0
    \end{pmatrix},
x_{ij12} =
    \begin{pmatrix}
     0 & 1 & 0\\
     0 & 0 & 0
    \end{pmatrix},
x_{ij13} =
    \begin{pmatrix}
     0 & 0 & 0\\
     0 & 0 & 0
\end{pmatrix},\\
x_{ij21} =
    \begin{pmatrix}
     0 & 0 & 0\\
     1 & 0 & 0
    \end{pmatrix},
    x_{ij22} =
    \begin{pmatrix}
    0 & 0 & 0\\
    0 & 0 &  0
    \end{pmatrix},
x_{ij23} =
    \begin{pmatrix}
     0 & 0 & 0\\
     0 & 0 & 0
    \end{pmatrix}.
\end{eqnarray*}

In light of the Theorem \ref{the336} (e) one can conclude
$$\mc{R}^{(1)}*_2\mc{F}^{(1)}=\mc{A}^{(1)}*_2\mc{F}*_2\mc{R}*_2\mc{A}^{(1)}=\mc{A}^{(1)}*_2\mc{A}*_2\mc{A}^{(1)} \neq \mc{A}^{(1)}.$$
Therefore, every generalized inverse of $\mc{A}$ need not of the form $\mc{R}^{(1)}\kl\mc{F}^{(1)}.$ 
\end{example}
At this point one may be interested to know when does the generalized inverse of a Boolean tensor of the form $\mc{R}^{(1)}\kl\mc{F}^{(1)}$ ?  The answer to this question is explained in the following Remark. 
\begin{remark}\label{rmk3.46}
 If $\mc{X}$ is a reflexive inverse of a Boolean tensor $\mc{A}\in \mathbb{R}^{I_1 \times \cdots\times I_M\times J_1 \times \cdots\times J_N}$ and $\mc{A}=\mc{F}\kl\mc{R}$ has a space decomposition, where $\mc{F}\in\mathbb{R}^{I_1 \times \cdots\times I_M\times K_1 \times \cdots\times K_L}$ and $\mc{R}\in\mathbb{R}^{K_1 \times \cdots\times K_L\times J_1 \times \cdots\times J_N}.$ Then every generalized inverse is of the form $\mc{R}^{(1)}\kl\mc{F}^{(1)}.$
\end{remark}
Now considering the fact of Remark \ref{rmk3.46} and the observation of the Example \ref{example3.45},  one can get the desired result.

\begin{theorem}\label{refeqv}
Let  $\mc{A} \in \mathbb{R}^{I_1 \times \cdots\times I_M\times J_1 \times \cdots\times J_N}$ be a Boolean tensor and 
$\mc{A}=\mc{F}\kl\mc{R}$ be a space decomposition of $\mc{A}, $ where $\mc{F}\in\mathbb{R}^{I_1 \times \cdots\times I_M\times K_1 \times \cdots\times K_L},$  $\mc{R}\in\mathbb{R}^{K_1 \times \cdots\times K_L\times J_1 \times \cdots\times J_N}.$ Assume that generalized inverse of  either $\mc{F}$ reflexive or $\mc{R}$ reflexive. Then $\mc{X}$  is a reflexive generalized inverse of $\mc{A}$  if and only if $\mc{X} = \mc{R}^{(1)}\kl\mc{F}^{(1)}.$
\end{theorem}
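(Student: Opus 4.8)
The plan is to prove the two implications separately, in each case leaning on the structural identities for a space decomposition collected in Theorem~\ref{the336} (these apply here since $\mc{A}$ possesses a generalized inverse by Theorem~\ref{exisgen}), and using freely that Einstein products re-associate whenever the index blocks being contracted are disjoint, as is the case for the blocks $(K_1,\dots,K_L)$, $(I_1,\dots,I_M)$ and $(J_1,\dots,J_N)$. For the ``only if'' direction, assume $\mc{X}$ is a reflexive generalized inverse of $\mc{A}$. I would set $\mc{F}^{(1)}:=\mc{R}\n\mc{X}$ and $\mc{R}^{(1)}:=\mc{X}\m\mc{F}$, which by Theorem~\ref{the336}$(e)$ are generalized inverses of $\mc{F}$ and $\mc{R}$ (in fact reflexive ones, so the standing hypothesis on the factors is automatically met). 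Then, writing $\mc{A}=\mc{F}\kl\mc{R}$ and using $\mc{X}\m\mc{A}\n\mc{X}=\mc{X}$,
\[
\mc{R}^{(1)}\kl\mc{F}^{(1)}=(\mc{X}\m\mc{F})\kl(\mc{R}\n\mc{X})=\mc{X}\m(\mc{F}\kl\mc{R})\n\mc{X}=\mc{X}\m\mc{A}\n\mc{X}=\mc{X},
\]
which exhibits $\mc{X}$ in the required form.

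For the ``if'' direction, suppose $\mc{X}=\mc{R}^{(1)}\kl\mc{F}^{(1)}$ and, without loss of generality, $\mc{F}^{(1)}$ is a reflexive generalized inverse of $\mc{F}$ (the case where $\mc{R}^{(1)}$ is reflexive is symmetric). By Theorem~\ref{the336}$(c)$, $\mc{A}\n\mc{R}^{(1)}=\mc{F}$ and $\mc{F}^{(1)}\m\mc{A}=\mc{R}$. Equation~$(1)$ of Definition~\ref{defgi} then follows from
\[
\mc{A}\n\mc{X}\m\mc{A}=(\mc{A}\n\mc{R}^{(1)})\kl(\mc{F}^{(1)}\m\mc{A})=\mc{F}\kl\mc{R}=\mc{A},
\]
so $\mc{X}$ is already a generalized inverse. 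For Equation~$(2)$ I would regroup the product, collapse the inner block via $\mc{F}^{(1)}\m\mc{A}\n\mc{R}^{(1)}=\mc{F}^{(1)}\m\mc{F}$, and apply the reflexivity relation $\mc{F}^{(1)}\m\mc{F}\kl\mc{F}^{(1)}=\mc{F}^{(1)}$:
\[
\mc{X}\m\mc{A}\n\mc{X}=\mc{R}^{(1)}\kl(\mc{F}^{(1)}\m\mc{A}\n\mc{R}^{(1)})\kl\mc{F}^{(1)}=\mc{R}^{(1)}\kl(\mc{F}^{(1)}\m\mc{F}\kl\mc{F}^{(1)})=\mc{R}^{(1)}\kl\mc{F}^{(1)}=\mc{X}.
\]
In the symmetric case I would instead rewrite the inner block as $\mc{R}\n\mc{R}^{(1)}$ using $\mc{F}^{(1)}\m\mc{A}=\mc{R}$ and finish with $\mc{R}^{(1)}\kl\mc{R}\n\mc{R}^{(1)}=\mc{R}^{(1)}$. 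Either way $\mc{X}$ satisfies $(1)$ and $(2)$, hence is a reflexive generalized inverse of $\mc{A}$.

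The algebra itself is routine; the weight of the argument sits in Theorem~\ref{the336}, especially part~$(c)$, which packages the left/right cancellation afforded by $\mathfrak{R}(\mc{A})=\mathfrak{R}(\mc{F})$ and $\mathfrak{R}(\mc{A}^T)=\mathfrak{R}(\mc{R}^T)$ (through Corollary~\ref{rtcan} and Theorem~\ref{ltcan}). I expect the only genuine subtlety to be confirming that the identities $\mc{A}\n\mc{R}^{(1)}=\mc{F}$ and $\mc{F}^{(1)}\m\mc{A}=\mc{R}$ hold for the arbitrary generalized inverses $\mc{F}^{(1)},\mc{R}^{(1)}$ that appear in a given factorization of $\mc{X}$, and not merely for the particular ones induced by a fixed reflexive inverse of $\mc{A}$; should that need extra care, I would re-derive those two identities directly from the range conditions via Lemma~\ref{range-stan}.
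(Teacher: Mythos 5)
Your proposal is correct and follows essentially the same route as the paper: the converse direction via Theorem~\ref{the336}$(e)$ and $\mc{X}=\mc{X}\m\mc{A}\n\mc{X}=\mc{X}\m\mc{F}\kl\mc{R}\n\mc{X}$, and the forward direction via Theorem~\ref{the336}$(c)$ together with reflexivity of $\mc{F}^{(1)}$ (or $\mc{R}^{(1)}$) to get $\mc{X}\m\mc{A}\n\mc{X}=\mc{X}$. The only cosmetic difference is that you verify condition $(1)$ directly from $(c)$, whereas the paper simply cites Theorem~\ref{the336}$(d)$ for that step; the subtlety you flag about applying $(c)$ to the arbitrary $\mc{F}^{(1)},\mc{R}^{(1)}$ in the given factorization is present in the paper's argument as well.
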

\begin{proof}
Consider the generalized inverse of $\mc{F}$ is reflexive. Taking into account of Theorem \ref{eqvspace} $(d)$, we obtain  $\mc{X} = \mc{R}^{(1)}\kl\mc{F}^{(1)}$, which is a generalized inverse of  $\mc{A}$. Therefore, it is enough to show $\mc{X}\m\mc{A}\n\mc{X}=\mc{X}.$ Now using Theorem \ref{eqvspace} $(c)$, we get
\begin{eqnarray*}
\mc{X}\m\mc{A}\n\mc{X} =\mc{R}^{(1)}\kl\mc{F}^{(1)}\m {\mc{A}\n\mc{R}^{(1)}}\kl\mc{F}^{(1)}
=\mc{R}^{(1)}\kl {\mc{F}^{(1)}\m\mc{F}\kl\mc{F}^{(1)}} = \mc{R}^{(1)}\kl\mc{F}^{(1)}.
\end{eqnarray*}
Conversely, let $\mc{X}$ be a reflexive inverse of $\mc{A}.$ Then by Theorem \ref{eqvspace} $(e)$,
\begin{eqnarray*}
\mc{X} = \mc{X}\m\mc{A}\n\mc{X} = \mc{X}\m\mc{F}\kl\mc{R}\n\mc{X} = \mc{R}^{(1,2)}\kl\mc{F}^{(1,2)}=\mc{R}^{(1)}\kl\mc{F}^{(1)}.
\end{eqnarray*}
\vspace{-.3cm}
\end{proof}


\begin{remark}\label{rk2.41}
If we drop the condition either $\mc{F}$ or $\mc{R}$ is reflexive generalized inverse of $\mc{A}$ in Theorem \ref{refeqv}, then the theorem will not true in general.
 \end{remark}

In favour of the the Remark \ref{rk2.41} we produce an example as follows.

\begin{example}
Let $\mc{A}$ be the Boolean tensor defined in Example \ref{example3.45} and $\mc{A}=\mc{F}=\mc{R}.$ Since $\mc{A}\2\mc{I}\2\mc{A}=\mc{I}$ and $\mc{I}\2\mc{A}\2\mc{I}\neq\mc{I}$, it follows that $\mc{I}$ is the generalized inverse for both $\mc{F}$ and $\mc{G}$ but not reflexive. In view of the Theorem \ref{refeqv},   
one can conclude 
$\mc{R}^{(1)}\2\mc{F}^{(1)}=\mc{I}$
is not a reflexive generalized inverse of $\mc{A}.$
\end{example}

In \cite{beasley} and \cite{song}, the authors have defined the rank of a Boolean matrix through space decomposition. Next, we discuss the rank and weight of a Boolean tensors.

\begin{definition}
 Let $\mc{A}\in\mathbb{R}^{I_1\times\cdots\times I_M\times J_1 \times\cdots\times
 J_N}$ be a Boolean tensor. If there exist a least positive integer,   $r=K_1 \times\cdots\times K_L$  such that the Boolean tensors $\mc{B}\in\mathbb{R}^{I_1 \times\cdots\times I_M\times K_1\times\cdots\times  K_L}$ and $\mc{C}\in\mathbb{R}^{K_1 \times\cdots\times K_L\times J_1\times\cdots\times J_N}$ satisfies  $\mc{A}=\mc{B}\kl\mc{C}$.  Then  $r$ is called the Boolean rank of $\mc{A}$ and denoted by $r_b(\mc{A}).$
\end{definition}

\begin{example}\label{exrank}
Consider a Boolean tensor
$~\mc{A}=(a_{ijkl}) \in \mathbb{R}^{{2\times2}\times{2 \rtimes 2}}$ with entries
\begin{eqnarray*}
a_{ij11} =
    \begin{pmatrix}
    1 & 0  \\
    0 &  0
    \end{pmatrix},~
a_{ij12} =
    \begin{pmatrix}
     0 & 0\\
     0 &  0
    \end{pmatrix},~
a_{ij21} =
    \begin{pmatrix}
     0 & 0\\
     1 &  0
    \end{pmatrix},~
a_{ij22} =
    \begin{pmatrix}
    1 &  0\\
    0 &  0
    \end{pmatrix}.
    \end{eqnarray*}
    There exist a least positive integer $r=2$ and two tensor
    $~\mc{B}=(b_{ijk}) \in \mathbb{R}^{{2\times2 \times 2}}$ and $~\mc{C}=(c_{ijk}) \in \mathbb{R}^{{2\times2 \times 2}}$ with entries
    \begin{eqnarray*}  
    b_{ij1} =
    \begin{pmatrix}
    1 &  0\\
    0 &  0
    \end{pmatrix},
       b_{ij2} =
    \begin{pmatrix}
    0 &  0\\
    1 &  0
    \end{pmatrix},
       c_{ij1} =
    \begin{pmatrix}
    1 &  0\\
    0 &  1
    \end{pmatrix},
       c_{ij2} =
    \begin{pmatrix}
    0 &  1\\
    0 &  0
    \end{pmatrix},
    \end{eqnarray*}
    such that $\mc{A}= \mc{B}*_1\mc{C}$. However, $r=1$ gives two matrices $B$ and $C$, which is impossible to get a tensor.  Thus rank of the tensor is $2$.
\end{example}

On the other hand, the rank of the Boolean tensor is zero if it is zero tensor. Further, we have $\mc{A}=\mc{I}_m\m\mc{A}=\mc{A}\n\mc{I}_n$, where $\mc{A}\in\mathbb{R}^{I_1\times\cdots\times I_M\times J_1 \times\cdots\times
 J_N} $. It is quite apparent that
$$0\leq r_b(\mc{A})\leq \min\{I_1 \times\cdots\times I_M,~J_1\times\cdots\times  J_N\}. $$

To prove the last result of this paper, we define weight of Boolean tensor as.

\begin{definition}
The weight of Boolean tensor is denoted by $w(\mc{A})$ and defined as
$$
w(\mc{A})=\{\mbox{ Total number of non zero elements of } \mc{A}\}.
$$
\end{definition}

The existence of generalized inverse can be discussed through Boolean rank, as follows.

\begin{theorem}\label{rankthm}
Let $\mc{A}\in\mathbb{R}^{I_1\times \cdots \times I_M\times J_1 \times \cdots \times  J_N}$ be any tensor with $r_b(\mc{A})\leq 1.$ Then $\mc{A}$ is regular.
\end{theorem}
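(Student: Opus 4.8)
The plan is to produce an explicit generalized inverse: when $r_b(\mc{A})\le 1$ the transpose $\mc{A}^T$ always works. So the goal is to verify equation $(1)$ of Definition \ref{defgi} with $\mc{X}=\mc{A}^T$, i.e. to show $\mc{A}\n\mc{A}^T\m\mc{A}=\mc{A}$, using Lemma \ref{lemma1} for one of the two inequalities and the rank-one structure for the other.

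First I would split on the value of $r_b(\mc{A})$. If $\mc{A}=\mc{O}$ (the only tensor of Boolean rank $0$), then trivially $\mc{O}\n\mc{O}\m\mc{O}=\mc{O}$, so $\mc{O}$ is its own generalized inverse. If $r_b(\mc{A})=1$, then by the definition of Boolean rank there are Boolean tensors $\mc{B}\in\mathbb{R}^{I_1\times\cdots\times I_M\times 1\times\cdots\times 1}$ and $\mc{C}\in\mathbb{R}^{1\times\cdots\times 1\times J_1\times\cdots\times J_N}$ with $\mc{A}=\mc{B}\kl\mc{C}$; since each contracted index runs over a one-element set, this product carries no summation, so $a_{i_1\cdots i_M j_1\cdots j_N}=b_{i_1\cdots i_M}\,c_{j_1\cdots j_N}$ with all entries in $\{0,1\}$.

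The core step is then the entrywise computation of $\mc{A}\n\mc{A}^T\m\mc{A}$. Using $(\mc{A}^T)_{k_1\cdots k_N l_1\cdots l_M}=a_{l_1\cdots l_M k_1\cdots k_N}$ and the factorization,
$$
(\mc{A}\n\mc{A}^T\m\mc{A})_{i_1\cdots i_M j_1\cdots j_N}=\sum_{k_1\cdots k_N}\sum_{l_1\cdots l_M} b_{i_1\cdots i_M}\,c_{k_1\cdots k_N}\,b_{l_1\cdots l_M}\,c_{k_1\cdots k_N}\,b_{l_1\cdots l_M}\,c_{j_1\cdots j_N}.
$$
The factors $b_{i_1\cdots i_M}$ and $c_{j_1\cdots j_N}$ do not depend on the summation indices, so they come out of the sum, leaving $a_{i_1\cdots i_M j_1\cdots j_N}$ multiplied by a Boolean scalar; since every Boolean scalar is $\le 1$, this gives $\mc{A}\n\mc{A}^T\m\mc{A}\le\mc{A}$. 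Combining with the reverse inequality $\mc{A}\le\mc{A}\n\mc{A}^T\m\mc{A}$ of Lemma \ref{lemma1} yields $\mc{A}\n\mc{A}^T\m\mc{A}=\mc{A}$. Hence $\mc{X}=\mc{A}^T$ satisfies equation $(1)$, so $\mc{A}^{(1)}$ exists and $\mc{A}$ is regular. (One could equally finish by observing that the Boolean scalar above is in fact $1$ because $\mc{A}\neq\mc{O}$, which gives the equality directly.)

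The computation itself is routine index bookkeeping; the only point requiring care is the degenerate-dimension identification — because all the middle dimensions $K_s$ equal $1$, the product $\mc{B}\kl\mc{C}$ involves no summation and $\mc{A}$ is literally an outer product of two Boolean arrays — together with the fact that all arithmetic lives in the Boolean semiring $\mathfrak{B}$, so that a nonzero scalar factor acts as a multiplicative identity. There is no substantial obstacle beyond setting up this rank-one normal form correctly.
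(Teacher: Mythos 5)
Your proof is correct, but it takes a genuinely different route from the paper. The paper argues structurally: for $r_b(\mc{A})=1$ it brings $\mc{A}$ by permutation tensors to the block normal form $\begin{bmatrix}\mc{J}&\mc{O}\\ \mc{O}&\mc{O}\end{bmatrix}$ with $\mc{J}$ an all-ones tensor, notes $\mc{J}$ is regular, and then invokes Lemma \ref{block} (regularity of block-diagonal tensors) together with Preposition \ref{permu} and the equivalence-invariance of regularity (Theorem \ref{thm3.43}) to transfer regularity back to $\mc{A}$. You instead read $r_b(\mc{A})=1$ literally from the definition ($K_1\times\cdots\times K_L=1$ forces every $K_s=1$), so $\mc{A}$ is an outer product $a_{i_1\cdots i_M j_1\cdots j_N}=b_{i_1\cdots i_M}c_{j_1\cdots j_N}$, and then verify by direct index computation that $\mc{A}\n\mc{A}^T\m\mc{A}\le\mc{A}$, which combined with Lemma \ref{lemma1} gives $\mc{A}\n\mc{A}^T\m\mc{A}=\mc{A}$. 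Your approach buys an explicit $\{1\}$-inverse, namely $\mc{A}^T$, and in fact more: by Theorem \ref{thm1} the identity $\mc{A}\n\mc{A}^T\m\mc{A}=\mc{A}$ shows the Moore--Penrose inverse exists and equals $\mc{A}^T$, which is stronger than bare regularity. It also sidesteps the paper's unproved normal-form claim that every rank-one Boolean tensor can be permuted into the all-ones-block shape (true, but it deserves the outer-product justification you essentially supply). The paper's route, on the other hand, exercises the block and permutation machinery it developed and generalizes more naturally to tensors that are equivalent to block forms of higher rank. The only interpretive point in your argument is the degenerate-dimension reading of $r_b(\mc{A})=1$ — the paper's Example \ref{exrank} shows the authors are ambivalent about tensors with unit modes — but since the theorem statement explicitly allows $r_b(\mc{A})=1$, your reading is the one forced by the definition, and your proof is sound.
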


\begin{proof}
It is trivial for $r_b(\mc{A})=0,$ as a consequence of the fact $\mc{O}$ tensor is always regular.
 Further, consider $r_b(\mc{A})=1$ and define  a tensor $\mc{J},$ with no zero elements. Then there exist permutation tensors $P$ and $\mc{Q}$ such that $\mc{P}\m\mc{A}\n\mc{Q}=\begin{bmatrix}
\mc{J} & \mc{O}\\
\mc{O} & \mc{O}\\
\end{bmatrix}.$ As $\mc{J}$ is regular, it implies $\mc{P}\m\mc{A}\n\mc{Q}$ is regular.
 In view of the Lemma \ref{block} and Preposition \ref{permu} one can conclude $\mc{A}$ is regular 
\end{proof}
It is clear, if the weight of a Boolean tensor is $1,$ then the rank is also $1$. In view of this we obtain the following result. 
\begin{corollary}\label{weightcoro}
Let $\mc{A}\in\mathbb{R}^{I_1\times \cdots I_M\times J_1\times\cdots J_N}$ be any tensor with $w(\mc{A})\leq 1.$ Then $\mc{A}$ is regular.
\end{corollary}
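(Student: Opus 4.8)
The plan is to reduce Corollary \ref{weightcoro} directly to Theorem \ref{rankthm} by showing that the hypothesis $w(\mc{A})\leq 1$ forces the Boolean rank bound $r_b(\mc{A})\leq 1$. First I would dispose of the case $w(\mc{A})=0$: a Boolean tensor with no nonzero entries is the zero tensor $\mc{O}$, which has $r_b(\mc{O})=0$, so Theorem \ref{rankthm} applies verbatim (alternatively, $\mc{O}\n\mc{X}\m\mc{O}=\mc{O}$ for any $\mc{X}$ of conformable order, so $\mc{O}$ is regular by definition).

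For the remaining case $w(\mc{A})=1$ I would exhibit an explicit rank-one decomposition. Let $(p_1\cdots p_M q_1\cdots q_N)$ be the unique index tuple at which $\mc{A}$ takes the value $1$, all other entries being $0$. Take $L=1$ and $K_1=1$, and define $\mc{B}\in\mathbb{R}^{I_1\times\cdots\times I_M\times 1}$ to be the Boolean tensor whose only nonzero entry is $b_{p_1\cdots p_M 1}=1$, and $\mc{C}\in\mathbb{R}^{1\times J_1\times\cdots\times J_N}$ the Boolean tensor whose only nonzero entry is $c_{1 q_1\cdots q_N}=1$. Since the contraction runs over a single index of size $1$, the Einstein product collapses to $(\mc{B}\kl\mc{C})_{i_1\cdots i_M j_1\cdots j_N}=b_{i_1\cdots i_M 1}\,c_{1 j_1\cdots j_N}$, which equals $1$ precisely when $(i_1\cdots i_M)=(p_1\cdots p_M)$ and $(j_1\cdots j_N)=(q_1\cdots q_N)$; hence $\mc{B}\kl\mc{C}=\mc{A}$. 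Because a decomposition with inner product $0$ would force $\mc{A}=\mc{O}$, the value $r=1$ is minimal, and therefore $r_b(\mc{A})=1$.

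With $r_b(\mc{A})\leq 1$ in hand, Theorem \ref{rankthm} immediately gives that $\mc{A}$ is regular, which completes the argument.

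I do not expect a genuine obstacle here: the only points requiring a little care are the bookkeeping of the single ``active'' index tuple and the observation that a length-one Einstein contraction behaves as an outer product, so that the constructed $\mc{B}$ and $\mc{C}$ really reproduce $\mc{A}$. I would also remark that, although the statement is phrased for arbitrary order (the tensor need not be square), this causes no difficulty, since both Theorem \ref{rankthm} and the notion of Boolean rank are themselves formulated for arbitrary-order tensors.
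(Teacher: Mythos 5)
Your proposal is correct and follows the same route as the paper: the paper justifies the corollary by noting that weight $\leq 1$ forces $r_b(\mc{A})\leq 1$ and then invoking Theorem \ref{rankthm}. You simply make explicit the rank-one factorization $\mc{A}=\mc{B}\kl\mc{C}$ (and the trivial zero case) that the paper leaves as "clear."
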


\section{Conclusion}
In this paper, we have introduced generalized inverses $(\{i\}$-inverses $(i = 1, 2, 3, 4))$  with the Moore-Penrose inverse and weighted Moore-Penrose inverse for Boolean tensors via the Einstein product, which is a generalization of the generalized inverses of Boolean matrices. In addition to this, we have discussed their existence and uniqueness. This paper also provides some characterization through complement and its application to generalized inverses. 
Further, we explored the space decomposition for the Boolean  tensors, at the same time, we have studied rank and the weight for the Boolean tensor. 
In particular, we limited our study for Boolean tensors with $r_b(\mc{A}) \leq 1$ and $w(\mc{A}) \leq 1$. Herewith left as open problems for future studies.\\
{\bf Problem:} If the Boolean rank or weight  of a tensor $\mc{A}$ is greater than 1, then under which conditions the Boolean tensor $\mc{A}$ is regular $?$\\
Additionally, it would be interesting to investigate more generalized inverses on the Boolean tensors; this work is currently underway.

\noindent {\bf{Acknowledgments}}\\
This research work was supported by Science and Engineering Research Board (SERB), Department of Science and Technology, India, under the Grant No. EEQ/2017/000747.

\bibliographystyle{abbrv}
\bibliographystyle{vancouver}
\bibliography{Boolean}
\end{document}